
\documentclass[12pt,a4paper,article] {amsart}
\usepackage{amssymb}
\usepackage{amscd}

\usepackage{geometry}
\geometry{left=1.9cm,right=1.9cm,top=3cm,bottom=3cm}

\usepackage[leqno]{amsmath}
\usepackage{amsfonts}
\usepackage{amssymb}
\usepackage{amsthm}
\usepackage{amssymb}
\usepackage[all]{xy}
\usepackage{graphicx}
\usepackage{hyperref}
\usepackage{color}




\newcommand{\CCChi}{\mbox{\Huge$\chi$}}

\newcommand{\RR}{\mathbf R}
\newcommand{\NN}{\mathbf N}

\newcommand{\ZZ}{\mathbf Z}


\DeclareMathOperator{\esssup}{ess\,sup}
\DeclareMathOperator{\essinf}{ess\,inf}

\theoremstyle{plain}
\newtheorem{theorem}{Theorem}[section]
\newtheorem{corollary}[theorem]{Corollary}
\newtheorem{lemma}[theorem]{Lemma}

\theoremstyle{definition}
\newtheorem{definition}[theorem]{Definition}

\newtheorem{remark}[theorem]{Remark}
\newtheorem{notation}[theorem]{Notation}

\numberwithin{equation}{section}

\renewcommand{\leq}{\leqslant}
\renewcommand{\geq}{\geqslant}

\title[Random homogenization of coercive Hamilton-Jacobi equations in 1d]{Random homogenization of coercive Hamilton-Jacobi equations in 1d}

\subjclass[2010]{35B27}


\keywords{stochastic homogenization, Hamilton-Jacobi equation, non-convex Hamiltonian, coercive}

\author[Hongwei Gao]{Hongwei Gao}

\address{
University of California, Irvine   \\ 
CA, 92697\\
USA}
\email{hongweig@math.uci.edu}


\thanks{Partially supported by DMS-1151919 } 


\begin{document}
\vspace{18mm} \setcounter{page}{1} \thispagestyle{empty}

\begin{abstract}
In this paper, we will prove the random homogenization of general coercive non-convex Hamilton-Jacobi equations in one dimensional case. This extends the result of Armstrong, Tran and Yu when the Hamiltonian has a separable form $H(p,x,\omega)=H(p)+V(x,\omega)$ for any coercive $H(p)$.
\end{abstract}

\maketitle

\section{Introduction}

\subsection{Overview}

We study the Hamilton-Jacobi equation of the following form:
\begin{eqnarray*}
\begin{cases}
u_{t}+H(Du,x,\omega)=0 & (x,t)\in\RR^{d}\times(0,+\infty)\\
u(x,0)= g(x) & x\in\RR^{d}\\
\end{cases}
\end{eqnarray*}

The Hamiltonian $H(p,x,\omega)$ is stationary ergodic and $g(x)\in BUC(\RR^{d})$. The main issue in stochastic homogenization of Hamilton-Jacobi equation is to consider: for each $\epsilon>0$, $\omega\in\Omega$, let $u^{\epsilon}(t,x,\omega)$ be the unique solution of the equation:
\begin{eqnarray*}
\begin{cases}
u^{\epsilon}_{t}+H(Du^{\epsilon},\frac{x}{\epsilon},\omega)=0 & (x,t)\in\RR^{d}\times(0,+\infty)\\
u^{\epsilon}(x,0)= g(x) & x\in\RR^{d}\\
\end{cases}
\end{eqnarray*}

Prove that for a.e. $\omega\in\Omega$, as $\epsilon\rightarrow 0$, $u^{\epsilon}(t,x,\omega)\rightarrow \overline{u}(t,x)$ locally uniformly and $\overline{u}(t,x)$ is the unique solution of the homogenized equation:
\begin{eqnarray*}
\begin{cases}
\overline{u}_{t}+\overline{H}(D\overline{u})=0 & (x,t)\in\RR^{d}\times(0,+\infty)\\
\overline{u}(x,0)=g(x) & x\in\RR^{d}\\
\end{cases}
\end{eqnarray*}

If $H(p,x,\omega)$ is convex with respect to $p\in\RR^{d}$, stochastic homogenization was proved independently by Souganidis[\ref{99 Souganidis}] and by Rezakhanlou-Tarver[\ref{00Rezakhanlou, Tarver}]. This result was extended to t-dependent case by Schwab [\ref{russell schwab}] when the Hamiltonian has super-linear growth in $p$ and by Jing-Souganidis-Tran[\ref{Jing-Souganidis-Tran}] for Hamiltonians with the form $a(x,t,\omega)|p|$. For those quasi-convex Hamiltonians, Siconolfi and Davini [\ref{Davini and Siconolfi}] established the random homogenization in 1d, and the general dimensional case was proved by Amstrong-Souganidis[\ref{ref Armstrong and Souganidis}].

It remains an open problem that whether random homogenization still holds if the Hamiltonian is non-convex. The first genuinely non-convex example of stochastic homogenization was provided by Amstrong-Tran-Yu[\ref{a non-covex by ATY}] for a special class of  Hamiltonians with the following typical form.
\begin{eqnarray*}
H(p,x,\omega)=(|p|^{2}-1)^{2}+V(x,\omega), (p,x)\in\RR^{d}\times\RR^{d}
\end{eqnarray*}

In one dimensional case, the same author established in another paper[\ref{1-d seperable noncovex by ATY}] the random homogenization of separable Hamiltonians
\begin{eqnarray*}
H(p,x,\omega)=H(p)+V(x,\omega),(p,x)\in\RR\times\RR  & \text{for any coercive }H(p)
\end{eqnarray*}

Recently,  Armstrong-Cardaliaguet[\ref{Armstrong and Cardaliaguet}] considered the homogenization of Hamiltonian $H(p,x,\omega)$ that is homogeneous in $p$ and with the assumption of unit range of dependence on $(x,\omega)$(basically, it means that $H(p,x,\omega)$ and $H(p,y,\omega)$ are independent once $|x-y|>1$).

This paper is aimed to extend the result of Amstrong-Tran-Yu[\ref{1-d seperable noncovex by ATY}] to general coercive $H(p,x,\omega)$.

\subsection{Assumption and main result}

Consider the Hamiltonian $H(p,x,\omega)$ that is continuous in $(p,x)\in\RR\times\RR$ and measurable in $\omega\in\Omega$.

\textbf{(A1)} Stationary Ergodic: there exists a probability space $\ (\Omega,\mathcal{F},\mathbf{P})$ and a group $\lbrace\tau_{y}\rbrace_{y\in\RR}$ of $\mathcal{F}$-measurable, measure-preserving transformations $\tau_{y}:\Omega \rightarrow \Omega$, i.e. for any $x,y\in\RR$:
\begin{eqnarray*}
\tau_{x+y}=\tau_{x}\circ \tau_{y} \text{ and } \mathbf{P}[\tau_{y}(A)]=\mathbf{P}[A]
\end{eqnarray*}

\hspace{1cm}Ergodic: $A\in \mathcal{F}, \ \tau_{z}(A)=A \text{ for every }z\in \RR \Rightarrow \mathbf{P}[A]\in \lbrace 0,1 \rbrace$.

\hspace{1cm}Stationary: $H(p,y,\tau_{z}\omega)=H(p,y+z,\omega) \text{ for any }y,z\in\RR \text{ and }\omega\in\Omega$.

\vspace{5mm}

\textbf{(A2)} Coercive: $\liminf\limits_{|p|\rightarrow +\infty}\essinf\limits\limits_{(x,\omega)\in\RR\times\Omega} H(p,x,\omega)=+\infty$.

\vspace{5mm}

\textbf{(A3)} Local Uniformly Continuous: for any compact set $K\subset\RR$,
\begin{eqnarray*}
|H(p,x,\omega)-H(q,y,\omega)|\leq \rho_{K}(|p-q|+|x-y|), (p,x,\omega),(q,y,\omega)\in K\times\RR\times\Omega
\end{eqnarray*}

The above $\rho_{K}$ is the modulus of continuity.

\begin{theorem}\label{main theorem}
Assume \textbf{(A1)-(A3)} hold and $g(x)\in BUC(\RR)$, for each $\epsilon>0$ and $\omega\in\Omega$, let $u^{\epsilon}(x,t,\omega)$ be the solution of the Hamilton-Jacobi equation
\begin{eqnarray*}
\begin{cases}
u_{t}^{\epsilon}+H(Du^{\epsilon},\frac{x}{\epsilon},\omega)=0 & (x,t)\in\RR\times(0,+\infty)\\
u^{\epsilon}(x,0)=g(x) & x\in\RR\\
\end{cases}
\end{eqnarray*}
Then, there is an effective Hamiltonian $\overline{H}(p)\in C(\RR)$ with $\lim\limits_{|p|\rightarrow+\infty}\overline{H}(p)=+\infty$, such that for a.e. $\omega\in\Omega$, $\lim\limits_{\epsilon\rightarrow 0^{+}}u^{\epsilon}(x,t,\omega)=\overline{u}(x,t)$ locally uniformly and $\overline{u}(x,t)$ is the solution of the homogenized Hamilton-Jacobi equation
\begin{eqnarray*}
\begin{cases}
\overline{u}_{t}+\overline{H}(D\overline{u})=0 & (x,t)\in\RR\times(0,+\infty)\\
\overline{u}(x,0)=g(x) & x\in\RR\\
\end{cases}
\end{eqnarray*}

\end{theorem}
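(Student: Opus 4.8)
The plan is to follow the general strategy for one-dimensional Hamilton-Jacobi homogenization: reduce everything to an analysis of the cell problem at the level of the "metric"/correctors, identify $\overline H$ through the existence of approximate correctors with the right slopes, and then invoke the perturbed test function method of Evans together with the comparison principle to upgrade the cell-problem information to the full homogenization statement. Since $d=1$, the crucial simplification is that solving $H(u',x,\omega)=a$ is essentially an ODE problem: on a level set where $a\ge \max_{x}\overline H^{-}$ (the coercive regime), for each $a$ large enough the equation $H(q,x,\omega)=a$ has a nonempty solution set in $q$, and one can try to build global sublinear correctors by an explicit integration. The hard part, and the reason the convex theory does not apply verbatim, is that $\{q:H(q,x,\omega)\le a\}$ need not be an interval, so the usual "inf-convolution"/optimal control representation of the corrector is unavailable; one must instead work directly with the graph of $H(\cdot,x,\omega)$ and track which branch of the multivalued relation $q=q(x)$ one integrates along.

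\textbf{Step 1 (Quasiconvexification / reduction to monotone envelopes).} First I would define, for each fixed $a\in\RR$, the generalized inverse slopes
\begin{eqnarray*}
q^{+}(a,x,\omega)=\sup\{q:H(q,x,\omega)\le a\},\qquad q^{-}(a,x,\omega)=\inf\{q:H(q,x,\omega)\le a\},
\end{eqnarray*}
and more generally the upper and lower quasiconvex (really: monotone) envelopes of $H$ in $p$. By coercivity \textbf{(A2)} these are finite for $a$ large, and by \textbf{(A1)} they are stationary in $x$. The point is that a global corrector with mean slope matching a prescribed value exists iff the ergodic average of the appropriate branch of $q^{\pm}(a,\cdot,\omega)$ has the right sign/value; this is where the one-dimensional structure does the work. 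I expect to need a case analysis according to whether the target slope $p$ lies in a region where $\overline H$ is increasing, decreasing, or flat, mirroring the separable analysis of Armstrong-Tran-Yu but now with $x$-dependent branches.

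\textbf{Step 2 (Definition of $\overline H$ and construction of correctors).} Define, for $a$ in the coercive range,
\begin{eqnarray*}
\mu^{\pm}(a,\omega)=\lim_{R\to\infty}\frac1R\int_{0}^{R}q^{\pm}(a,y,\omega)\,\dd y,
\end{eqnarray*}
which exist and are a.s. deterministic by Birkhoff's ergodic theorem, provided the integrand is in $L^{1}$ — a point that needs \textbf{(A3)} plus coercivity to control $q^{\pm}$ locally uniformly in $a$ on compacts. Then $\overline H(p)$ is defined by inverting the relation $p=\mu^{\pm}(a,\omega)$ along the correct branch; coercivity of $\overline H$ and $\overline H\in C(\RR)$ follow from monotonicity/continuity of $a\mapsto\mu^{\pm}(a)$ and the uniform coercivity bound. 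For each such $a$ and each admissible branch, the function $v(x,\omega)=\int_{0}^{x}q^{\pm}(a,y,\omega)\,\dd y$ is, by construction, a solution of $H(v',x,\omega)=a$ in the viscosity sense (one must check the inequalities at points where the branch of $q$ jumps, which is exactly where non-convexity bites), and it is sublinear a.s. by the ergodic theorem, i.e. an exact corrector. For slopes $p$ in a flat piece of $\overline H$ one instead builds approximate correctors $v^{\delta}$ with $|H(Dv^{\delta},x,\omega)-\overline H(p)|\le\delta$ and $\limsup_{|x|\to\infty}|v^{\delta}(x)|/|x|\le\delta$, by a gluing argument between the two monotone branches.

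\textbf{Step 3 (Perturbed test functions and conclusion).} With exact or approximate correctors in hand for every $p$, the standard argument applies: fix a smooth test function $\phi$ touching $\overline u$ from above (resp.\ below) at $(x_{0},t_{0})$, set $p=D\phi(x_{0},t_{0})$, and use $\phi(x,t)+\epsilon v(x/\epsilon,\omega)$ as a perturbed test function for $u^{\epsilon}$; the sublinearity of $v$ makes the perturbation vanish uniformly as $\epsilon\to0$, and the corrector equation forces $\phi_{t}+\overline H(D\phi)\le0$ (resp.\ $\ge0$) at $(x_{0},t_{0})$. Hence every subsequential locally uniform limit of $u^{\epsilon}$ (these exist by the standard a priori Lipschitz-in-$x$, Lipschitz-in-$t$ bounds from coercivity and comparison, which give equi-continuity) is a viscosity sub- and supersolution of the homogenized equation with initial data $g$, and the comparison principle for $\overline u_{t}+\overline H(D\overline u)=0$ — valid since $\overline H\in C(\RR)$ and $g\in BUC$ — identifies the limit uniquely. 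The a.s. statement comes from taking a countable dense set of $a$'s (equivalently $p$'s) in Step 2, applying the ergodic theorem on that countable set off a single null set, and using continuity of $\overline H$ to fill in the rest.

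\textbf{Main obstacle.} The genuinely delicate step is Step 2: verifying that $v(x,\omega)=\int_0^x q^{\pm}(a,y,\omega)\,\dd y$ is a viscosity solution of $H(v',x,\omega)=a$ across the (typically dense) set of points where the selected branch $q^{\pm}(a,\cdot,\omega)$ is discontinuous, and simultaneously arranging the branch selection so that the resulting slope average is \emph{monotone} in $a$ — without which $\overline H$ is ill-defined. In the separable case $H(p)+V(x,\omega)$ the branches are globally parametrized by the single function $H$, so this is transparent; for general $H(p,x,\omega)$ one must show that the "outermost" branches $q^{\pm}$ vary measurably and monotonically in $a$ uniformly in $\omega$, and that intermediate branches, needed for flat pieces of $\overline H$, can be navigated by a one-dimensional connectedness argument on the super-level sets $\{H(\cdot,x,\omega)\le a\}$.
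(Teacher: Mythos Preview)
Your overall architecture (build correctors branch-by-branch in one dimension, then run the perturbed test function method) is sound, and Step~3 is standard once Step~2 is in place. The gap is in Step~2, and it is precisely the gap the paper is written to fill.

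The outermost branches $q^{\pm}(a,x,\omega)$ are well-defined and stationary, and integrating them does give sublinear correctors for $p$ on the increasing and decreasing tails of $\overline H$. But this only identifies the quasiconvex envelope of $\overline H$; the content of the theorem is what happens for $p$ in between, where $\overline H$ may have several local maxima and minima. Your proposal to handle these ``intermediate branches\ldots by a one-dimensional connectedness argument'' and a ``gluing argument between the two monotone branches'' is exactly where the real work lies, and it is not routine, for two reasons the paper isolates explicitly.

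First, for general $H(p,x,\omega)$ the number of connected components of $\{q:H(q,x,\omega)\le a\}$ varies with $(x,\omega)$, so there is no globally consistent, stationary labeling of intermediate branches to integrate along. The paper resolves this by first approximating $H$ by \emph{constrained} Hamiltonians with a fixed finite well structure at fixed $p$-locations (Section~\ref{reduction to constrained Hamiltonian}, Lemmas~\ref{approximation by cluster point free Hamiltonians} and \ref{approximate general coercive constrained Hamiltonian}), and uses a stability lemma (Lemma~\ref{Stability}) to pass back to $H$. Your proposal has no analogue of this reduction, and without it the branch selection you describe is not even well-posed.

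Second, even after the well structure is fixed, one does not know in advance which branches to glue or in what order. The paper introduces an oscillation dichotomy (Definition~\ref{oscillation}) and proves gluing lemmas (Lemmas~\ref{left steep side}, \ref{right steep side}, \ref{approx M=m }, together with the Squeeze Lemma~\ref{squeeze lemma}) that inductively reduce the small-oscillation case to the large-oscillation case; only in the latter is $\overline H$ provably level-set convex and your direct branch-integration argument actually works (Section~\ref{Proof of Large Oscillation}). Without this induction there is no mechanism to show that the correctors you build for different $p$ assemble into a continuous coercive $\overline H$, or even that $\overline H(p)$ is well-defined for every $p$.

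In short, your ``main obstacle'' paragraph correctly locates the difficulty but underestimates its depth: the paper's contribution is exactly the machinery (constrained approximation, oscillation dichotomy, gluing lemmas) that converts your connectedness heuristic into a proof.
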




\subsection{Main difficulty and main idea}
Let's first review the case of separable Hamiltonian, by approximation, we can assume $H(p)$ has finite many wells. The main ingredients in the proof by Amstrong-Tran-Yu[\ref{1-d seperable noncovex by ATY}] are the following: (1)When the oscillation of $V(x,\omega)$ is larger than the maximal local oscillation of $H(p)$, $\overline{H}(p)$ turns out to be quasi-convex. (2) If $V(x,\omega)$ has small oscillation, they introduced gluing lemmas, through which the Hamiltonian can be eventually reduced to the large oscillation case.

For the general Hamiltonian $H(p,x,\omega)$, there are several difficulties we need to overcome. 

First, unlike the separable Hamiltonian, the number of wells of $H(p,x,\omega)$ (as a function of $p$) depends on $(x,\omega)$. To solve this problem, we approximate $H(p,x,\omega)$ by Hamiltonians that have same number of wells for every $(x,\omega)$(c.f. section \ref{reduction to constrained Hamiltonian}). 

Secondly, we need to find a way to characterize the oscillation when $p$ and $(x,\omega)$ are mixed. After that, we can extend the above (1) and (2) to our general situation.

\section{Preliminaries}\label{preliminary}

\subsection{Stability of Homogenization}

\begin{definition}(Follow the definition in Armstrong-Tran-Yu[\ref{1-d seperable noncovex by ATY}])
$H(p,x,\omega)$ is regularly homogenizable at $p\in\RR$ if there exists an $\overline{H}(p)\in\RR$ such that: for any $\lambda>0$, if $v_{\lambda}(x,p,\omega)\in W^{1,\infty}(\RR)$ is the unique viscosity solution of the equation
\begin{eqnarray*}
	\lambda v_{\lambda}+H(v_{\lambda}^{\prime},x,\omega)=0 && x\in\RR
\end{eqnarray*}
Then
\begin{eqnarray}\label{regularly homogenizable}
\text{for any } R>0,&&\mathbf{P}\left[\omega\in\Omega:\limsup\limits_{\lambda\rightarrow 0}\max\limits_{|x|\leq\frac{R}{\lambda}}\left|\lambda v_{\lambda}(x,p,\omega)+\overline{H}(p)\right|=0\right]=1
\end{eqnarray}

\end{definition}

\begin{remark}
By Armstrong-Souganidis[\ref{ref Armstrong and Souganidis}], with \textbf{(A1)}, (\ref{regularly homogenizable}) is equivalent to the following identity.
\begin{eqnarray*}
\mathbf{P}\left[\omega\in\Omega:\lim\limits_{\lambda\rightarrow 0}\left|\lambda v_{\lambda}(0,p,\omega)+\overline{H}(p)\right|=0\right]=1
\end{eqnarray*}
\end{remark}
\begin{remark}
Homogenization with $H(p,x,\omega)$ holds if $H(p,x,\omega)$ is regularly homogenizable for each $p\in\RR$. If the cell problem at $p$ is solvable, then $H(p,x,\omega)$ is regularly homogenizable at $p$.
\end{remark}

\begin{definition}
Let $G(p,x,\omega):\RR\times\RR\times\Omega\rightarrow\RR$ satisfy \textbf{(A1)}, denote
\begin{eqnarray*}
G_{\inf}(p):=\essinf\limits\limits_{(x,\omega)\in\RR\times\Omega}G(p,x,\omega) && G_{\sup}(p):=\esssup\limits\limits_{(x,\omega)\in\RR\times\Omega}G(p,x,\omega)
\end{eqnarray*}
\end{definition}

\begin{lemma}\label{for a.e. omega}
If $G(p,x,\omega)$ satisfies \textbf{(A1)} and is continuous in $x$,  $G_{\inf}(p),G_{\sup}(p)\in\RR$, then for a.e. $\omega\in\Omega$,
\begin{eqnarray*}
G_{\inf}(p)=\essinf\limits\limits_{x\in\RR}G(p,x,\omega) && G_{\sup}(p)=\esssup\limits\limits_{x\in\RR}G(p,x,\omega)
\end{eqnarray*}
\end{lemma}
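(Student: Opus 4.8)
The plan is to prove the statement about the essential supremum; the essential infimum follows by applying the result to $-G$. Write $M := G_{\sup}(p) = \esssup_{(x,\omega)} G(p,x,\omega)$ for the fixed value of $p$ under consideration. There are two inequalities to establish for a.e.\ $\omega$: first, $\esssup_{x\in\RR} G(p,x,\omega) \le M$ for a.e.\ $\omega$, and second, $\esssup_{x\in\RR} G(p,x,\omega) \ge M$ for a.e.\ $\omega$. The first is the easy direction. By definition of the essential supremum over the product (with respect to $\mathcal{L}^1 \otimes \mathbf{P}$, or more precisely the relevant product structure), the set $\{(x,\omega) : G(p,x,\omega) > M\}$ is null, so by Fubini for a.e.\ $\omega$ the slice $\{x : G(p,x,\omega) > M\}$ has Lebesgue measure zero, which gives $\esssup_{x} G(p,x,\omega) \le M$ for such $\omega$. (Continuity in $x$ is not even needed here.)

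For the reverse inequality I would use stationarity and ergodicity. Fix $\delta > 0$ and consider the event
\begin{eqnarray*}
A_\delta := \left\{\omega \in \Omega : \esssup_{x \in \RR} G(p,x,\omega) > M - \delta\right\}.
\end{eqnarray*}
The first step is to check that $A_\delta$ is $\mathcal{F}$-measurable; since $G$ is continuous in $x$, the essential supremum over $x \in \RR$ coincides with the supremum over $x \in \QQ$, so $\esssup_{x} G(p,x,\omega) = \sup_{x \in \QQ} G(p,x,\omega)$ is a countable supremum of measurable functions of $\omega$, hence measurable. The second step is to observe that $A_\delta$ is translation-invariant: by the stationarity relation $G(p,y,\tau_z\omega) = G(p, y+z, \omega)$ (assumption \textbf{(A1)}), the map $\omega \mapsto \esssup_x G(p,x,\omega)$ is itself invariant under each $\tau_z$, so $\tau_z(A_\delta) = A_\delta$ for every $z \in \RR$. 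By ergodicity, $\mathbf{P}[A_\delta] \in \{0,1\}$. The third step is to rule out $\mathbf{P}[A_\delta] = 0$: if $A_\delta$ were null, then for a.e.\ $\omega$ we would have $G(p,x,\omega) \le M - \delta$ for a.e.\ $x$, hence $G(p,x,\omega) \le M-\delta$ for $(\mathcal{L}^1\otimes\mathbf{P})$-a.e.\ $(x,\omega)$, contradicting $\esssup_{(x,\omega)} G(p,x,\omega) = M$. Therefore $\mathbf{P}[A_\delta] = 1$ for every $\delta > 0$. Intersecting over a sequence $\delta = 1/n$ gives $\esssup_x G(p,x,\omega) \ge M$ for a.e.\ $\omega$, and combined with the first direction this yields $\esssup_x G(p,x,\omega) = M$ for a.e.\ $\omega$.

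The main obstacle, such as it is, is purely a measurability/interchange-of-essential-suprema bookkeeping issue: one must be careful that ``$\esssup$ over the product'' is being taken with respect to the correct product measure and that Fubini applies, and that $A_\delta$ is genuinely measurable — this is exactly where the continuity-in-$x$ hypothesis is used, to reduce to a countable supremum over rational $x$. Everything else is a routine application of the ergodic theorem in its simplest form (invariant events have trivial probability). I would present the argument for general $p$ fixed throughout; no uniformity in $p$ is claimed or needed.
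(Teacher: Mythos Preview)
Your proposal is correct and follows essentially the same approach as the paper: both arguments apply ergodicity to translation-invariant level-set events of $\omega\mapsto\sup_x G(p,x,\omega)$ (resp.\ $\inf_x$) and then pass through a countable intersection over thresholds $1/n$. The only cosmetic differences are that the paper works directly with $A_\alpha=\{\omega:G(p,x,\omega)>\alpha\text{ for all }x\}$ and identifies $\alpha_0:=\sup\{\alpha:\mathbf{P}[A_\alpha]=1\}=G_{\inf}(p)$ in one stroke, whereas you separate the two inequalities (Fubini for one direction, ergodicity for the other) and are more explicit about measurability via the countable supremum over rationals; neither version requires anything the other doesn't.
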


\begin{proof}
Fix $p\in\RR$, denote $g(x,\omega)=G(p,x,\omega)$. For any $\alpha\in\RR$, define
\begin{eqnarray}\label{A_alpha}
A_{\alpha}:=\lbrace \omega\in\Omega:g(x,\omega)>\alpha \text{ for all }x\in\RR\rbrace
\end{eqnarray}

Stationary implies $\tau_{z}A_{\alpha}=A_{\alpha}$, for any $z\in\RR$. By ergodicity, $\mathbf{P}[A_{\alpha}]=0$ or 1. Let $\alpha_{0}:=\sup\lbrace \alpha: \mathbf{P}[A_{\alpha}]=1 \rbrace$. By (\ref{A_alpha}), $\alpha_{0}=G_{\inf}(p)$. Since $\mathbf{P}\left[A_{\alpha_{0}-\frac{1}{n}}\right]=1, n\in\NN$, $\mathbf{P}\left[\bigcap\limits_{n=1}^{\infty} A_{\alpha_{0}-\frac{1}{n}}\right]=1$. Then
\begin{eqnarray*}
G_{\inf}(p)=\essinf\limits\limits_{x\in\RR}G(p,x,\omega) & \omega\in \bigcap\limits_{n=1}^{\infty} A_{\alpha_{0}-\frac{1}{n}}
\end{eqnarray*}

The other equality can be proved similarly.

\end{proof}

\begin{lemma}\label{Stability}
Given uniformly coercive Hamiltonians $\lbrace H_{n}(p,x,\omega)\rbrace_{n\geq 1}\bigcup\lbrace H(p,x,\omega)\rbrace$ that satisfy \textbf{(A1)}, each $H_{n}(p,x,\omega)$ has effective Hamiltonian $\overline{H}_{n}(p)$. And for a.e. $\omega\in\Omega$:
\begin{eqnarray*}
\lim_{n\rightarrow +\infty}\lVert H_{n}(p,x,\omega)-H(p,x,\omega) \rVert_{L^{\infty}(K\times\RR)}=0 & \text{ for each compact set }K\subset\RR
\end{eqnarray*}
Then, $H(p,x,\omega)$ has effective Hamiltonian $\overline{H}(p)$ and $\lim\limits_{n\rightarrow+\infty}\overline{H}_{n}(p)=\overline{H}(p)$.
\end{lemma}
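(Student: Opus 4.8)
The plan is to exploit the characterization of regular homogenizability through the approximate cell problem (the $\lambda$-problem) together with the equivalence noted in the remark after the definition, namely that homogenization of $H$ is governed by the a.s. limit of $\lambda v_\lambda(0,p,\omega)$. Fix $p\in\RR$. For each $n$ let $v_\lambda^{(n)}(\cdot,p,\omega)$ be the unique viscosity solution of $\lambda v_\lambda^{(n)}+H_n((v_\lambda^{(n)})',x,\omega)=0$, and let $v_\lambda$ be the analogous solution for $H$. The uniform coercivity assumption gives, for each fixed $\lambda>0$, a common Lipschitz bound: there is $R_0=R_0(\lambda)$ with $\|(v_\lambda^{(n)})'\|_{L^\infty},\|v_\lambda'\|_{L^\infty}\le R_0$ for all $n$ (and for $H$), since any viscosity solution must keep its gradient in the sublevel set $\{q:\ G_{\inf}(q)\le \lambda\|v\|_\infty\}$, and $\lambda\|v_\lambda^{(n)}\|_\infty$ is uniformly bounded by $\sup_{x,\omega}|H_n(0,x,\omega)|$, which converges. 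Thus on the compact set $K=[-R_0,R_0]$ we may apply the hypothesis $\|H_n-H\|_{L^\infty(K\times\RR)}\to 0$ a.s.

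The key step is a stability estimate for the $\lambda$-problem: I claim $\|v_\lambda^{(n)}(\cdot,p,\omega)-v_\lambda(\cdot,p,\omega)\|_{L^\infty(\RR)}\le \tfrac1\lambda\|H_n-H\|_{L^\infty(K\times\RR)}$. This follows from the standard comparison principle argument: if $\delta_n:=\|H_n-H\|_{L^\infty(K\times\RR)}$, then $v_\lambda+\tfrac{\delta_n}{\lambda}$ is a viscosity supersolution of the $H_n$-equation (because wherever a test function touches, $H_n((v_\lambda)',x,\omega)\ge H((v_\lambda)',x,\omega)-\delta_n=-\lambda v_\lambda-\delta_n$, using that $(v_\lambda)'$ stays in $K$), and similarly $v_\lambda-\tfrac{\delta_n}{\lambda}$ is a subsolution; comparison for the coercive equation $\lambda w+H_n(w',x,\omega)=0$ then sandwiches $v_\lambda^{(n)}$. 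Multiplying by $\lambda$ gives $|\lambda v_\lambda^{(n)}(0,p,\omega)-\lambda v_\lambda(0,p,\omega)|\le\delta_n$ uniformly in $\lambda$ and $\omega$ in the full-measure set where the $L^\infty$ convergence holds.

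Now fix $\omega$ in the intersection of the full-measure sets where all the $H_n$ are regularly homogenizable and where $\delta_n\to 0$. For each $n$, $\lambda v_\lambda^{(n)}(0,p,\omega)\to -\overline{H}_n(p)$ as $\lambda\to0$. From the uniform-in-$\lambda$ bound $|\lambda v_\lambda^{(n)}(0,p,\omega)-\lambda v_\lambda^{(m)}(0,p,\omega)|\le\delta_n+\delta_m$, letting $\lambda\to0$ shows $|\overline{H}_n(p)-\overline{H}_m(p)|\le\delta_n+\delta_m$, so $\{\overline{H}_n(p)\}$ is Cauchy; call its limit $\overline{H}(p)$. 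A standard $3\varepsilon$ argument then finishes it: given $\varepsilon>0$ pick $n$ with $\delta_n<\varepsilon$ and $|\overline{H}_n(p)-\overline{H}(p)|<\varepsilon$, then pick $\lambda_0$ so that $|\lambda v_\lambda^{(n)}(0,p,\omega)+\overline{H}_n(p)|<\varepsilon$ for $\lambda<\lambda_0$; combining with the stability estimate gives $|\lambda v_\lambda(0,p,\omega)+\overline{H}(p)|<3\varepsilon$ for all $\lambda<\lambda_0$, which is exactly regular homogenizability of $H$ at $p$ with effective Hamiltonian $\overline{H}(p)$. Since this holds for every $p\in\RR$, $H$ homogenizes and $\overline{H}_n\to\overline{H}$ pointwise; the convergence is in fact locally uniform because each $\overline{H}_n$ and $\overline{H}$ are coercive and $\overline{H}_n(p)$ is sandwiched between $(H_n)_{\inf}(p)$ and $(H_n)_{\sup}(p)$, giving equicontinuity via (A3)-type bounds on the $H_n$.

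The main obstacle is making the Lipschitz bound on $v_\lambda^{(n)}$ genuinely uniform in $n$: one must check that the uniform coercivity hypothesis — which should be read as $\liminf_{|p|\to\infty}\inf_n (H_n)_{\inf}(p)=+\infty$ together with a uniform bound on $(H_n)_{\sup}(0)$ coming from the $L^\infty$ convergence — really does confine all the gradients to a single compact $K$ independent of $n$ for each fixed $\lambda$. Once that is secured, everything else is the comparison principle and elementary limit juggling.
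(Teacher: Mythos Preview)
Your argument is correct and follows essentially the same route as the paper's proof: both rest on the stability estimate $|\lambda v_\lambda^{(n)}(0,p,\omega)-\lambda v_\lambda(0,p,\omega)|\le \|H_n-H\|_{L^\infty(K\times\RR)}$ obtained by comparison, after confining all gradients to a fixed compact $K$ via uniform coercivity, and both finish with a three-term triangle inequality. The only cosmetic difference is that you show $\{\overline{H}_n(p)\}$ is Cauchy directly, whereas the paper extracts a convergent sub-subsequence of an arbitrary subsequence and identifies the limit; these are equivalent maneuvers.

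Two small remarks. First, your $R_0$ should not depend on $\lambda$: your own reasoning gives $\lambda\|v_\lambda^{(n)}\|_\infty\le \sup_{x,\omega}|H_n(p,x,\omega)|$, which is bounded uniformly in $n$ (and in $\lambda$) once $H_n\to H$ at the single point $p$, so the gradient bound and the choice of $K$ are $\lambda$-free --- this is exactly what the paper records as $r=r(p)$, and it dissolves the ``main obstacle'' you flag. Second, your closing sentence about locally uniform convergence of $\overline{H}_n$ to $\overline{H}$ via ``(A3)-type bounds'' overshoots the lemma: only \textbf{(A1)} is assumed on the $H_n$, so equicontinuity of the $\overline{H}_n$ is not available from the hypotheses, and in any case the lemma claims only pointwise convergence.
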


\begin{proof}
Fix $p\in\RR$, for $\lambda>0$, let $v_{n,\lambda}(x,p,\omega)$ and $v_{\lambda}(p,x,\omega)$ be solutions of the following equations:
\begin{eqnarray*}
\lambda v_{n,\lambda}+H_{n}(p+v_{n,\lambda}^{\prime},x,\omega)= 0  &&
\lambda v_{\lambda}+H(p+v_{\lambda}^{\prime},x,\omega)= 0
\end{eqnarray*}

Then
\begin{eqnarray*}
-\lambda v_{n,\lambda}\in [H_{n,\inf}(p),H_{n,\sup}(p)] &&
-\lambda v_{\lambda}\in[H_{\inf}(p),H_{\sup}(p)] 
\end{eqnarray*}

By uniform coercive, there is $r=r(p)$, such that $|v_{n,\lambda}^{\prime}(x,\omega)|,|v_{\lambda}^{\prime}(x,\omega)|< r$.

Denote $K:=[p-r,p+r]$, then by comparison principle,
\begin{eqnarray*}
|\lambda v_{n,\lambda}(0,p,\omega)-\lambda v_{\lambda}(0,p,\omega)|&\leq& \sup_{x\in\RR}|\lambda v_{n,\lambda}(x,p,\omega)-\lambda v_{\lambda}(x,p,\omega)|\\
&\leq& \lVert H_{n}(\cdot,\cdot,\omega)-H(\cdot,\cdot,\omega) \rVert_{L^{\infty}(K\times\RR)}
\end{eqnarray*}

Boundedness of $-\lambda v_{n,\lambda}$ implies $\left\lbrace \overline{H_{n}}(p)\right\rbrace_{n\geq 1}$ is bounded. For any subsequence $\lbrace n_{j} \rbrace_{j\geq 1}$, there is a sub-subsequence $\lbrace n_{j_{k}} \rbrace_{k\geq 1}$, such that $\lim\limits_{k\rightarrow \infty} \overline{H_{n_{j_{k}}}}(p_{0})=h_{*}$. Then
\begin{eqnarray*}
\left|\left(-\lambda v_{\lambda}(0,p,\omega)\right)-h_{*}\right|&\leq&  \left|\left(-\lambda v_{\lambda}(0,p,\omega)\right)-\left(-\lambda v_{n_{j_{k}},\lambda}(0,p,\omega)\right) \right|\\
&& +\left|\left(-\lambda v_{n_{j_{k}},\lambda}(0,p,\omega)\right)-\overline{H_{n_{j_{k}}}}(p)\right|+\left|\overline{H_{n_{j_{k}}}}(p)-h_{*}\right|\\
&\leq&\left\lVert H_{n_{j_{k}}}(\cdot,\cdot,\omega)-H(\cdot,\cdot,\omega) \right\rVert_{L^{\infty}\big(K\times\RR\big)}\\
& &+\left|\left(-\lambda v_{n_{j_{k}},\lambda}(0,p,\omega)\right)-\overline{H_{n_{j_{k}}}}(p)\right|+\left|\overline{H_{n_{j_{k}}}}(p)-h_{*}\right|\\
&=:&  \text{\textcircled{1}}+\text{\textcircled{2}}+\text{\textcircled{3}}
\end{eqnarray*}

For any $\epsilon>0$, when $k$ is large enough, $\text{\textcircled{1}}<\frac{\epsilon}{3}$, and $\text{\textcircled{3}}<\frac{\epsilon}{3}$. Fix such $k$, there is some $\lambda_{0}=\lambda_{0}(k)$, such that, $\text{\textcircled{2}}<\frac{\epsilon}{3}$ as long as $0<\lambda<\lambda_{0}$. Thus
$\lim\limits_{\lambda\rightarrow 0}-\lambda v_{\lambda}(0,p,\omega)=h_{*}$, so $\overline{H}(p)=h_{*}.$ The above limit is independent of the choice of $\lbrace n_{j} \rbrace_{j\geq 1}$, then $\lim\limits_{n\rightarrow\infty}\overline{H_{n}}(p)=h_{*}$. Thus
\begin{eqnarray*}
\lim_{n\rightarrow\infty}\overline{H_{n}}(p)=\overline{H}(p)
\end{eqnarray*}

\end{proof}

\begin{remark}
	Based on this lemma, we can construct the approximation of $H(p,x,\omega)$ by constrained Hamiltonians(c.f. Definition \ref{constrained Hamiltonian}), this is the first step of reduction in this paper.
\end{remark}

\begin{corollary}\label{closedness of regularly homogenizable points}
Let $H(p,x,\omega)$ satisfy \textbf{(A1)-(A3)} and fix $p_{0}\in\RR$.

(1)If $H(p,x,\omega)$ is regularly homogenizable on $(-\infty,p_{0})$ and $\overline{H}(p)$ is continuous, then $H(p,x,\omega)$ is also homogenizable at $p_{0}$ and $\lim\limits_{p\rightarrow p_{0}^{-}}\overline{H}(p)=\overline{H}(p_{0})$.

(2)If $H(p,x,\omega)$ is regularly homogenizable on $(p_{0},+\infty)$ and $\overline{H}(p)$ is continuous, then $H(p,x,\omega)$ is also homogenizable at $p_{0}$ and $\lim\limits_{p\rightarrow p_{0}^{+}}\overline{H}(p)=\overline{H}(p_{0})$.

\end{corollary}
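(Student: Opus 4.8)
The plan is to deduce Corollary \ref{closedness of regularly homogenizable points} from Lemma \ref{Stability} by exhibiting, for a fixed $p_0$, a sequence of Hamiltonians $H_n(p,x,\omega)$ that are \emph{already known} to homogenize everywhere (or at least near $p_0$) and that converge locally uniformly to $H$. The natural construction is a ``horizontal shift'' away from $p_0$: set $H_n(p,x,\omega):=H(p+\tfrac1n,x,\omega)$ in case (1), and $H_n(p,x,\omega):=H(p-\tfrac1n,x,\omega)$ in case (2). By \textbf{(A3)} these converge to $H$ uniformly on $K\times\RR$ for every compact $K$, and by \textbf{(A2)} the family is uniformly coercive. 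The point of the shift is that $H_n$ should homogenize at $p_0$ because $H$ homogenizes at $p_0+\tfrac1n$, which lies in the half-line where $H$ is assumed regularly homogenizable; more precisely $\overline{H_n}(p_0)=\overline{H}(p_0+\tfrac1n)$ should follow by relating the metric problems / solutions $v_\lambda$ of $H_n$ at $p_0$ to those of $H$ at $p_0+\tfrac1n$.

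Concretely, first I would record the elementary identity tying the two metric problems together: if $v_\lambda(x,q,\omega)$ solves $\lambda v_\lambda + H(q+v_\lambda',x,\omega)=0$, then the same function, viewed with base momentum $p_0$, solves $\lambda v_\lambda + H_n(p_0 + (v_\lambda + \tfrac{x}{n})' , x,\omega)=0$ up to a linear correction $\tfrac{x}{n}$; so the corrector for $H_n$ at $p_0$ is $v_\lambda(x,p_0+\tfrac1n,\omega)$ shifted by a linear term, and the quantity $-\lambda v_\lambda(0,\cdot,\omega)$ that determines $\overline{H}$ is unchanged by adding a linear function. Hence $H_n$ is regularly homogenizable at $p_0$ with $\overline{H_n}(p_0)=\overline{H}(p_0+\tfrac1n)$. (One must check $H_n$ is regularly homogenizable at \emph{every} $p$, or invoke the remark after the regular-homogenizability definition that pointwise regular homogenizability at the relevant $p$ is what Lemma \ref{Stability} actually needs; in fact for case (1), $H_n(p,x,\omega)=H(p+\tfrac1n,x,\omega)$ is regularly homogenizable at every $p\in(-\infty,p_0]$, and for $p>p_0$ one may simply redefine $H_n$ to agree with some fixed homogenizable Hamiltonian — e.g. $H(p+\tfrac1n,x,\omega)$ is regularly homogenizable for $p<p_0$ and we only need it at $p_0$ — so it is cleanest to apply the pointwise version of Lemma \ref{Stability} at the single point $p=p_0$.)

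With $H_n \to H$ locally uniformly for a.e. $\omega$ (indeed for every $\omega$, by \textbf{(A3)}) and each $H_n$ regularly homogenizable at $p_0$, Lemma \ref{Stability} applied at $p=p_0$ yields that $H$ is regularly homogenizable at $p_0$ and $\overline{H}(p_0)=\lim_{n\to\infty}\overline{H_n}(p_0)=\lim_{n\to\infty}\overline{H}(p_0+\tfrac1n)=\lim_{p\to p_0^+}\overline{H}(p)$ in case (2), and symmetrically $\overline{H}(p_0)=\lim_{p\to p_0^-}\overline{H}(p)$ in case (1); here the last equality uses the assumed continuity of $\overline{H}$ on the open half-line, so that the limit along the sequence $p_0\pm\tfrac1n$ equals the full one-sided limit. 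This gives both assertions.

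The main obstacle is the bookkeeping in the first step: verifying carefully that the shifted corrector solves the shifted cell/metric problem and that the additive linear term genuinely drops out of $-\lambda v_\lambda(0,p,\omega)+\overline{H}(p)$, so that $\overline{H_n}(p_0)=\overline{H}(p_0+\tfrac1n)$ holds exactly rather than merely asymptotically; one has to be slightly cautious because the linear shift $\tfrac{x}{n}$ is unbounded, so it is not literally a bounded corrector, but it is absorbed by rebasing the momentum, and the relevant object $\lambda v_\lambda(0,\cdot,\omega)$ is evaluated at $x=0$ where the shift vanishes. A secondary point to get right is ensuring the hypotheses of Lemma \ref{Stability} — uniform coercivity and a.e.-$\omega$ local uniform convergence — are met, which are both immediate from \textbf{(A2)} and \textbf{(A3)} since the perturbation here is deterministic.
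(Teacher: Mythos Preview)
Your approach is exactly the paper's: translate $H$ in the $p$-variable and apply Lemma~\ref{Stability} at the single point $p_0$. However, you have the direction of the shift reversed. In case~(1) the hypothesis is regular homogenizability on $(-\infty,p_0)$, so to make $H_n$ homogenizable at $p_0$ you need $H_n(p,x,\omega):=H(p-\delta_n,x,\omega)$ (this is what the paper does), giving $\overline{H_n}(p_0)=\overline{H}(p_0-\delta_n)$ with $p_0-\delta_n\in(-\infty,p_0)$; with your choice $H(p+\tfrac1n,x,\omega)$ one would need $H$ homogenizable at $p_0+\tfrac1n\notin(-\infty,p_0)$, and the argument fails. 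Symmetrically, case~(2) requires the shift $p\mapsto p+\delta_n$.

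Also, the discussion of a ``linear correction $\tfrac{x}{n}$'' is a red herring. There is no linear term to absorb: if $v_\lambda(\cdot,q,\omega)$ solves $\lambda v_\lambda+H(q+v_\lambda',x,\omega)=0$ and $H_n(p,x,\omega)=H(p-\delta_n,x,\omega)$, then the \emph{same} function $v_\lambda(\cdot,p_0-\delta_n,\omega)$ solves $\lambda w+H_n(p_0+w',x,\omega)=0$ on the nose, so $\overline{H_n}(p_0)=\overline{H}(p_0-\delta_n)$ immediately. Once the sign is fixed, your sketch coincides with the paper's proof.
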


\begin{proof}
Only prove $(1)$, since the proof of $(2)$ is similar. For any $\delta_{n}\rightarrow 0^{+}$, denote
\begin{eqnarray*}
H_{n}(p,x,\omega):=H\left(p-\delta_{n},x,\omega\right)
\end{eqnarray*}

By assumption, $H_{n}(p,x,\omega)$ is regularly homogenizable at $p_{0}$.  According to \textbf{(A3)}, for each $\omega\in\Omega$ and compact set $K\subset\RR$, we have $\lim\limits_{n\rightarrow \infty}\lVert H_{n}(p,x,\omega)-H(p,x,\omega) \rVert_{L^{\infty}(K\times\RR)}=0$.

Lemma \ref{Stability} implies $H(p,x,\omega)$ is regularly homogenizable at $p_{0}$ and 
\begin{eqnarray*}
\overline{H}(p_{0})=\lim_{n\rightarrow +\infty}\overline{H_{n}}(p_{0},x,\omega)=\lim_{n\rightarrow +\infty}\overline{H}\left(p_{0}-\delta_{n},x,\omega\right)
\end{eqnarray*}

This is true for any sequence $\delta_{n}\rightarrow 0^{+}$, so
\begin{eqnarray*}
\lim_{p\rightarrow p_{0}^{-}}\overline{H}(p,x,\omega)=\overline{H}(p_{0},x,\omega)
\end{eqnarray*}
\end{proof}

\subsection{Comparison Principle}
\begin{lemma}\label{CP}
	Let $H(p,x,\omega)$ satisfy $\textbf{(A1)-(A3)}$, for $R>0$, $1\gg\lambda>0$, $p\in\RR$, let $u$ and $v$ both be viscosity solutions of the equation
	\begin{eqnarray*}
		\lambda \gamma+H(p+\gamma^{\prime},x,\omega)=0 & x\in \left[-\frac{R}{\lambda},\frac{R}{\lambda}\right]
	\end{eqnarray*}
If there is a constant $M=M(p)>0$, such that $|\lambda u|+|\lambda v|\leq M(p)$. Then, there is a constant $C=C(p)>0$, such that
	\begin{eqnarray*}
		|\lambda u-\lambda v|\leq \frac{M(p)}{R}\sqrt{|x|^{2}+1}+\frac{M(p)C(p)}{R} & x\in \left[-\frac{R}{\lambda},\frac{R}{\lambda}\right]
	\end{eqnarray*}
\end{lemma}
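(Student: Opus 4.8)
The lemma is a quantitative comparison principle for the discounted problem on $I:=[-R/\lambda,R/\lambda]$: one controls $\lambda u-\lambda v$ by a smooth barrier that is of size $\approx M(p)/R$ in the middle of $I$ and blows up like $M(p)/\lambda$ at the endpoints, so that the endpoints become harmless. I would proceed in three stages --- an a priori Lipschitz bound from coercivity, a trivial treatment of the endpoints, and a doubling-of-variables argument ruling out an interior maximum of $u-v$ minus the barrier.

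First, wherever $u$ is differentiable the equation gives $H(p+u'(x),x,\omega)=-\lambda u(x)\le M(p)$, so by \textbf{(A2)} there is $r(p)$, depending only on $p$, with $|p+u'(x)|\le r(p)$; hence $u$ and $v$ are Lipschitz with a common constant $L(p)$ and all their sub/superdifferentials lie in $[-L(p),L(p)]$. Fix $K:=[p-L(p),p+L(p)]$ and let $\rho:=\rho_K$ be the modulus of \textbf{(A3)}. By symmetry it suffices to prove $u-v\le\Psi$ on $I$, where $\Psi(x):=\lambda^{-1}\big(\tfrac{M(p)}{R}\sqrt{x^2+1}+\tfrac{M(p)C(p)}{R}\big)$, a convex smooth function with $\lambda\Psi\ge\tfrac{M(p)(1+C(p))}{R}$ and, since $\sqrt{x^2+1}\ge R/\lambda$ at $\pm R/\lambda$, with $\Psi(\pm R/\lambda)\ge M(p)/\lambda^2$.

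Suppose $\theta:=\max_I(u-v-\Psi)>0$. Because the hypothesis bounds the \emph{sum} $|\lambda u|+|\lambda v|$, one has $u-v\le 2M(p)/\lambda$ on $I$, so at $\pm R/\lambda$ the quantity $u-v-\Psi$ is $\le\tfrac{M(p)}{\lambda}(2-\tfrac1\lambda)<0$ for $\lambda\ll1$; hence the maximum is attained at an interior point $x_0$. Now double variables: maximize $\Phi_\alpha(x,y):=u(x)-v(y)-\tfrac{\alpha}{2}(x-y)^2-\Psi(x)$ over $I^2$ at $(x_\alpha,y_\alpha)$; for $\alpha$ large the maximizer is interior, $\alpha(x_\alpha-y_\alpha)^2\to0$, and $(x_\alpha,y_\alpha)\to(x_0,x_0)$. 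Writing $q_\alpha:=\alpha(x_\alpha-y_\alpha)$, the viscosity inequalities for the subsolution $u$ and the supersolution $v$ give
\begin{eqnarray*}
\lambda u(x_\alpha)+H(p+q_\alpha+\Psi'(x_\alpha),x_\alpha,\omega)\le 0,\qquad \lambda v(y_\alpha)+H(p+q_\alpha,y_\alpha,\omega)\ge 0.
\end{eqnarray*}
The key point is that $q_\alpha+\Psi'(x_\alpha)$ is a superdifferential of $u$ and $q_\alpha$ a subdifferential of $v$, so both lie in $[-L(p),L(p)]$; thus $p+q_\alpha,\,p+q_\alpha+\Psi'(x_\alpha)\in K$ and $|\Psi'(x_\alpha)|\le 2L(p)$. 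Subtracting and using \textbf{(A3)},
\begin{eqnarray*}
\lambda u(x_\alpha)-\lambda v(y_\alpha)\le H(p+q_\alpha,y_\alpha,\omega)-H(p+q_\alpha+\Psi'(x_\alpha),x_\alpha,\omega)\le \rho\big(|x_\alpha-y_\alpha|+2L(p)\big),
\end{eqnarray*}
while $\Phi_\alpha(x_\alpha,y_\alpha)\ge\theta$ forces $u(x_\alpha)-v(y_\alpha)\ge\theta+\Psi(x_\alpha)$. Letting $\alpha\to\infty$ yields $\lambda\Psi(x_0)\le\rho(2L(p))$, i.e. $\tfrac{M(p)(1+C(p))}{R}\le\rho(2L(p))$; for $C(p)$ chosen large this is a contradiction, so $u-v\le\Psi$, and symmetrically $v-u\le\Psi$, which on multiplying by $\lambda$ is the claim.

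The real work is in the last step, and it is where I expect the main obstacle. The quantity $\rho(2L(p))$ depends only on $p$, but the contradiction above only requires $C(p)\gtrsim R\,\rho(2L(p))/M(p)$, which is not permitted since $C(p)$ must be $R$-independent. Closing this gap is exactly where the precise shape of the barrier matters: $\sqrt{x^2+1}$ has bounded slope and is essentially a supersolution of the one-dimensional ODE $\lambda\psi=\rho(|\psi'|)$, whose solutions genuinely decay in $R$ rather than leaving an $O(1)$ defect. One natural remedy is to replace the global doubling by an integration of that ODE inward from each endpoint --- legitimate because $u-v$ has no positive interior local maximum (the doubling above, applied to $u-v$ itself, gives $\lambda u(x^*)\le\lambda v(x^*)$ at any such point), hence $u-v$ is monotone on each connected component of $\{u>v\}$; another is to observe that the estimate is trivial once $R\le C(p)$, since there the right-hand side already exceeds $M(p)\ge|\lambda u-\lambda v|$, and to run a sharper version only for $R>C(p)$. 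The a priori Lipschitz bound --- confining all tested gradients to the fixed compact set $K$ --- and the hypothesis $\lambda\ll1$ --- killing the endpoints --- I expect to enter unchanged.
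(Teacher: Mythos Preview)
Your argument has a genuine gap, and you correctly identify it: after doubling variables you end up with $\lambda\Psi(x_0)\le\rho(2L(p))$, which forces $C(p)$ to scale with $R$. None of the remedies you sketch actually closes this; the problem is the \emph{choice of barrier}, not the method of comparison.

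The paper's proof avoids doubling entirely and uses a different barrier. Instead of your $\Psi$ (which puts the factor $\lambda^{-1}$ on the $\sqrt{x^2+1}$ term), it sets
\[
w(x):=v(x)+\frac{M(p)}{R}\sqrt{x^2+1}+\frac{M(p)}{\delta(p)R\lambda},
\]
with $\delta(p)$ chosen from \textbf{(A3)} so that $|q_1-q_2|<\delta(p)$ implies $|H(q_1,x,\omega)-H(q_2,x,\omega)|<1$ on the relevant compact set. The crucial point is that $|w'-v'|\le M(p)/R$, \emph{independent of $\lambda$}, so the Hamiltonian defect satisfies $H(p+w',x,\omega)\ge H(p+v',x,\omega)-\frac{M(p)}{\delta(p)R}$. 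This is exactly cancelled by $\lambda$ times the constant term $\frac{M(p)}{\delta(p)R\lambda}$, making $w$ a strict supersolution on the whole interval. Since $w\ge u$ at the endpoints (by $|\lambda u|+|\lambda v|\le M(p)$ and $\sqrt{x^2+1}\ge R/\lambda$ there), the ordinary comparison principle gives $u\le w$ directly. Only at the very last line is $\lambda\le1$ used, to weaken the coefficient of $\sqrt{x^2+1}$ from $\lambda M(p)/R$ to $M(p)/R$ and obtain the stated inequality with $C(p)=1/\delta(p)$.

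So the missing idea is: do not take the barrier from the conclusion literally. Keep the $\sqrt{x^2+1}$ term at scale $M(p)/R$ (so its slope is $O(1/R)$, hence the Hamiltonian perturbation is $O(1/R)$), and put the $\lambda^{-1}$ only on the \emph{constant} so that $\lambda\cdot(\text{constant})$ absorbs that $O(1/R)$ defect. Your a~priori Lipschitz bound from \textbf{(A2)} is used the same way; the doubling of variables is unnecessary.
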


\begin{proof}
	
	By $|\lambda u|+|\lambda v|\leq M(p)$, $H(p+u^{\prime},x,\omega)\leq M(p)$,
	$H(p+v^{\prime},x,\omega)\leq M(p)$.

	By \textbf{(A2)}, there is some $r=r(p)>0$, s.t. $|u^{\prime}|, |v^{\prime}|\leq r(p)$.

	By \textbf{(A3)}, there is some $\delta=\delta(p)>0$, s.t. $|H(q_{1},x,\omega)-H(q_{2},x,\omega)|<1$ if
	\begin{eqnarray*}
		q_{1}, q_{2}\in\left[p-r(p)-\frac{M(p)}{R},p+r(p)+\frac{M(p)}{R}\right] && |q_{1}-q_{2}|<\delta
	\end{eqnarray*}

	Define $w(x):=v+\frac{M(p)}{R}\sqrt{|x|^{2}+1}+\frac{M(p)}{\delta(p)R\lambda}$, then $|w^{\prime}|\leq r(p)+\frac{M(p)}{R}$.
	
	Thus $H(p+w^{\prime},x,\omega)$$\geq$$ H(p+v^{\prime},x,\omega)-\frac{M(p)}{\delta(p)R}$ and we have the following.		
	\begin{eqnarray*}
		\lambda w+H(p+w^{\prime},x,\omega)&=&\lambda v+\frac{\lambda M(p)}{R}\sqrt{|x|^{2}+1}+\frac{M(p)}{\delta(p)R}+H(p+w^{\prime},x,\omega)\\
		&\geq& \lambda v+\frac{\lambda M(p)}{R}\sqrt{|x|^{2}+1}+\frac{M(p)}{\delta(p)R}+H(p+v^{\prime},x,\omega)-\frac{M(p)}{\delta(p)R}\\
		&>& 0
	\end{eqnarray*}
	
	Moreover,
	\begin{eqnarray*}
		|\lambda u|+|\lambda v|\leq M(p)\implies  w|_{|x|=\frac{R}{\lambda}}\geq v|_{|x|=\frac{R}{\lambda}}+\frac{M(p)}{\lambda}
		\geq u|_{|x|=\frac{R}{\lambda}}
	\end{eqnarray*}

	By comparison principle, $w(x)\geq u(x),\ x\in \left[-\frac{R}{\lambda},\frac{R}{\lambda}\right]$. So
	\begin{eqnarray*}
	u-v\leq \frac{M(p)}{R}\sqrt{|x|^{2}+1}+\frac{M(p)}{\delta(p)R\lambda}
	\end{eqnarray*}
	
	Similarly,
	\begin{eqnarray*}
	v-u\leq \frac{M(p)}{R}\sqrt{|x|^{2}+1}+\frac{M(p)}{\delta(p)R\lambda}
	\end{eqnarray*}

	Thus when $\lambda\leq 1$, let $C(p)=\frac{1}{\delta(p)}$, then for $ x\in \left[-\frac{R}{\lambda},\frac{R}{\lambda}\right]$, we have
	\begin{eqnarray*}
		&&|\lambda u-\lambda v|\leq \frac{\lambda M(p)\sqrt{|x|^{2}+1}}{R}+\frac{C(p)M(p)}{R}
		\leq \frac{M(p)\sqrt{|x|^{2}+1}}{R}+\frac{C(p)M(p)}{R}
	\end{eqnarray*}
\end{proof}

\section{Reduction by constrained Hamiltonian with index $(\widetilde{L},L)$}\label{reduction to constrained Hamiltonian}

\subsection{Approximation by cluster-point-free Hamiltonians}

Let $H(p,x,\omega)$ satisfy \textbf{(A1)-(A3)} and denote
\begin{eqnarray*}
	h_{i}^{(n)}(x,\omega):=H\left(\frac{i}{n},x,\omega\right) && \mathcal{E}_{n} = \lbrace h_{i}^{(n)}(x,\omega) \rbrace_{-n^{2}\leq i\leq n^{2}}
\end{eqnarray*}
Let $\mathcal{E}_{n}^{+}=\lbrace g_{i}^{(n)}(x,\omega)\rbrace_{-n^{2}\leq i\leq n^{2}}$ be another family of stationary functions and define
\begin{eqnarray*}
	\Delta_{\mathcal{E}_{n},\mathcal{E}_{n}^{+}}(p,x,\omega):=\begin{cases}
		g_{-n^{2}}^{(n)}-h_{-n^{2}}^{(n)}  & p\in(-\infty,-n)\\
		(np-i)\left[g_{i+1}^{(n)}-h_{i+1}^{(n)}\right]+(i+1-np)\left[g_{i}^{(n)}-h_{i}^{(n)}\right]  & p\in\left[\frac{i}{n},\frac{i+1}{n}\right]\\
		g_{n^{2}}^{(n)}-h_{n^{2}}^{(n)} & p\in(n,+\infty)\\
	\end{cases} 
\end{eqnarray*}
So $\Delta_{\mathcal{E},\mathcal{E}^{+}}(p,x,\omega)$ is a stationary function and is continuous with respect to $(p,x)$.

\begin{lemma}\label{approximation by cluster point free Hamiltonians}
	If $H(p,x,\omega)$ satisfies \textbf{(A1)-(A3)}, then there is $\lbrace H^{(n)}(p,x,\omega)\rbrace_{n=2^{j}, j\in\NN}$, such that
	
	(1)$H^{(n)}(p,x,\omega)$ satisfies \textbf{(A1)-(A3)}, $\forall n=2^{j}, j\in\NN$.
	
	(2)For $-n^{2}\leq i\leq n^{2}$, $H^{(n)}\left(\frac{i}{n},x,\omega\right)$, as functions of $x$, have no cluster point. 
	
	(3)$\lVert H(p,x,\omega)-H^{(n)}(p,x,\omega) \rVert_{L^{\infty}(\RR\times\RR\times\Omega)}\leq \frac{1}{n}$.

\end{lemma}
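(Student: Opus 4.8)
The plan is to take
\[
H^{(n)}(p,x,\omega):=H(p,x,\omega)+\Delta_{\mathcal{E}_{n},\mathcal{E}_{n}^{+}}(p,x,\omega),
\]
and to choose the family $\mathcal{E}_{n}^{+}=\{g_{i}^{(n)}(x,\omega)\}_{-n^{2}\le i\le n^{2}}$ to consist of \emph{Gaussian} mollifications of the $h_{i}^{(n)}$ in the space variable: with $\phi_{\delta}(y):=(2\pi\delta^{2})^{-1/2}e^{-y^{2}/2\delta^{2}}$ and a width $\delta_{n}>0$ fixed small below, put
\[
g_{i}^{(n)}(x,\omega):=\int_{\RR}H\bigl(\tfrac{i}{n},x-y,\omega\bigr)\,\phi_{\delta_{n}}(y)\,\dd y .
\]
Then $H^{(n)}(\tfrac{i}{n},x,\omega)=h_{i}^{(n)}(x,\omega)+\bigl(g_{i}^{(n)}(x,\omega)-h_{i}^{(n)}(x,\omega)\bigr)=g_{i}^{(n)}(x,\omega)$ at every node, and since $\Delta_{\mathcal{E}_{n},\mathcal{E}_{n}^{+}}$ is, in $p$, the piecewise linear interpolant (extended by constants outside $(-n,n)$) of the values $g_{i}^{(n)}-h_{i}^{(n)}$, I obtain $\lVert H^{(n)}-H\rVert_{L^{\infty}(\RR\times\RR\times\Omega)}=\lVert\Delta_{\mathcal{E}_{n},\mathcal{E}_{n}^{+}}\rVert_{L^{\infty}}\le\max_{i}\lVert g_{i}^{(n)}-h_{i}^{(n)}\rVert_{L^{\infty}(\RR\times\Omega)}$. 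Because the convolution acts only in the space variable, stationarity passes to $g_{i}^{(n)}$: using $H(p,u,\tau_{z}\omega)=H(p,u+z,\omega)$,
\[
g_{i}^{(n)}(x,\tau_{z}\omega)=\int_{\RR}H\bigl(\tfrac{i}{n},x-y+z,\omega\bigr)\,\phi_{\delta_{n}}(y)\,\dd y=g_{i}^{(n)}(x+z,\omega),
\]
and $g_{i}^{(n)}$ is $\mathcal{F}$-measurable in $\omega$; keeping the underlying $(\Omega,\mathcal{F},\mathbf{P},\{\tau_{y}\}_{y\in\RR})$ fixed, $H^{(n)}$ satisfies \textbf{(A1)}.

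For the quantitative claims I use \textbf{(A3)} with $K=[-n,n]$, which contains every node $\tfrac{i}{n}$ with $|i|\le n^{2}$: there $h_{i}^{(n)}(\cdot,\omega)$ has modulus of continuity $\rho_{[-n,n]}$, \emph{uniformly in} $\omega$ and $i$, so
\[
\bigl|g_{i}^{(n)}(x,\omega)-h_{i}^{(n)}(x,\omega)\bigr|\le\int_{\RR}\rho_{[-n,n]}(|y|)\,\phi_{\delta_{n}}(y)\,\dd y ,
\]
and the right-hand side tends to $0$ as $\delta_{n}\to 0$; choosing $\delta_{n}$ small enough gives $\lVert g_{i}^{(n)}-h_{i}^{(n)}\rVert_{L^{\infty}}\le\tfrac1n$ for all $i$, which is exactly $(3)$. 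Coercivity of $H^{(n)}$ is then immediate from $H^{(n)}\ge H-\tfrac1n$, giving $\liminf_{|p|\to\infty}\essinf_{(x,\omega)}H^{(n)}(p,x,\omega)=+\infty$, i.e.\ \textbf{(A2)}. For \textbf{(A3)} of $H^{(n)}$: on $[\tfrac{i}{n},\tfrac{i+1}{n}]$ the function $\Delta_{\mathcal{E}_{n},\mathcal{E}_{n}^{+}}$ is affine in $p$ with slope $n\bigl[(g_{i+1}^{(n)}-h_{i+1}^{(n)})-(g_{i}^{(n)}-h_{i}^{(n)})\bigr]$, bounded by $2$ by the estimate just obtained, and it is constant in $p$ off $(-n,n)$, hence globally $2$-Lipschitz in $p$; for each fixed $p$ it is a convex combination of the functions $g_{j}^{(n)}-h_{j}^{(n)}$, and each $g_{j}^{(n)}$ inherits the spatial modulus $\rho_{[-n,n]}$ of $h_{j}^{(n)}$ (mollification never increases the modulus of continuity), so $\Delta_{\mathcal{E}_{n},\mathcal{E}_{n}^{+}}$ has spatial modulus $\le 2\rho_{[-n,n]}$. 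Thus $\Delta_{\mathcal{E}_{n},\mathcal{E}_{n}^{+}}$, and with it $H^{(n)}=H+\Delta_{\mathcal{E}_{n},\mathcal{E}_{n}^{+}}$, satisfies \textbf{(A3)} with a modulus depending only on $n$ and $\rho_{[-n,n]}$.

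It remains to verify $(2)$, which is the only genuinely nonroutine point. For every $\omega$ and every $i$, $H^{(n)}(\tfrac{i}{n},\cdot,\omega)=g_{i}^{(n)}(\cdot,\omega)$ is the Weierstrass (heat-kernel) transform of the continuous function $h_{i}^{(n)}(\cdot,\omega)$, hence extends to an entire function of the space variable; a real-analytic function on $\RR$ is either constant or has all its level sets $\{x:g_{i}^{(n)}(x,\omega)=c\}$ discrete, so in either case it has no cluster point. This is precisely why the mollifier is taken Gaussian rather than a generic $C^{\infty}_{c}$ bump: the latter would still give the $L^{\infty}$-closeness and \textbf{(A1)}--\textbf{(A3)}, but would not exclude oscillations of $g_{i}^{(n)}(\cdot,\omega)$ accumulating at a point. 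I therefore expect the real obstacle to be exactly this step --- upgrading an arbitrary continuous stationary field to one that is cluster-point-free in $x$ while staying in the class \textbf{(A1)}--\textbf{(A3)} and within $\tfrac1n$ of $H$ in $L^{\infty}$; once the single bound $\lVert g_{i}^{(n)}-h_{i}^{(n)}\rVert_{L^{\infty}}\le\tfrac1n$ is secured, the coercivity and modulus-of-continuity bookkeeping for $\Delta_{\mathcal{E}_{n},\mathcal{E}_{n}^{+}}$ is routine, as sketched above. (The restriction $n=2^{j}$, $j\in\NN$, plays no role here and is kept only for the later nested reductions.)
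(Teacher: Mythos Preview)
Your approach is essentially the paper's: both use a Gaussian (heat-kernel) mollification in the $x$-variable to upgrade $H\bigl(\tfrac{i}{n},\cdot,\omega\bigr)$ to a real-analytic function, and then invoke the identity theorem to rule out accumulating level sets. The paper keeps $h_i^{(n)}$ when it already has no cluster point and perturbs only the bad nodes, whereas you uniformly replace $h_i^{(n)}$ by its mollification $g_i^{(n)}$; your variant is tidier, and your verification of \textbf{(A1)}--\textbf{(A3)} and of the bound in (3) is correct.

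There is, however, a gap in your argument for (2). You write that a real-analytic function ``is either constant or has all its level sets \dots\ discrete, so in either case it has no cluster point'' --- but the constant case is precisely the one that \emph{fails}: if $h_i^{(n)}(\cdot,\omega)$ is constant then its Gaussian mollification $g_i^{(n)}(\cdot,\omega)$ is the same constant, and every point of $\RR$ is a cluster point of the one nontrivial level set. The paper confronts exactly this obstacle. It first shows (via the vanishing of all $x$-derivatives at a cluster point and a Fourier/moment argument) that if the mollified field has a cluster point then the original $H\bigl(\tfrac{i}{n},\cdot,\omega\bigr)$ is constant. If this occurs at \emph{every} dyadic node, $H$ is independent of $(x,\omega)$ and homogenization is trivial. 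Otherwise one fixes a node $d_0$ at which the mollification is non-constant (hence analytic with discrete level sets) and, at each constant node, sets
\[
g_i^{(n)}(x,\omega)=H\bigl(\tfrac{i}{n},x,\omega\bigr)+\tfrac{1}{n}\,\frac{H_\epsilon(d_0,x,\omega)}{\lVert H_\epsilon(d_0,\cdot,\cdot)\rVert_{L^\infty}+1},
\]
so that $g_i^{(n)}$ is a non-constant analytic function of $x$. You need this (or an equivalent) device to dispose of the constant case; once it is in place, the rest of your proof goes through.
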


\begin{proof}
	For each $\epsilon>0, -n^{2}\leq i\leq n^{2}$, denote
	\begin{eqnarray*}
		& H_{\epsilon}\left(\frac{i}{n},x,\omega\right):=\frac{1}{\sqrt{2\pi \epsilon}}\int_{\RR} e^{-\frac{(x-y)^{2}}{2\epsilon}}H\left(\frac{i}{n},y,\omega\right)dy&\\
	\end{eqnarray*}
	

	By \textbf{(A1)}, either $H_{\epsilon}\left(\frac{i}{n},x,\omega\right)$ has no cluster for a.e. $\omega\in\Omega$ or $H_{\epsilon}\left(\frac{i}{n},x,\omega\right)$ has cluster for a.e. $\omega\in\Omega$.

	\vspace{3mm}
	
	If $H_{\epsilon}\left(\frac{i}{n},x,\omega\right)$ has a cluster point $x_{0}$, without loss of generality, we can assume $x_{0}=0$ and $H_{\epsilon}\left(\frac{i}{n},0,\omega\right)=0$. Then $\frac{\partial^{k}}{\partial x^{k}}H_{\epsilon}\left(\frac{i}{n},0,\omega\right)=0, k=0,1,2,\cdots$, which means
	\begin{eqnarray*}
		\int_{\RR}y^{k}e^{-\frac{y^{2}}{2\epsilon}}H\left(\frac{i}{n},y,\omega \right)dy=0
	\end{eqnarray*}
	
	By Fourier analysis, we have $H\left(\frac{i}{n},x,\omega \right)\equiv 0$. Denote
	\begin{eqnarray*}
		D_{j}:=\left\lbrace \frac{i}{2^{j}} \right\rbrace_{-4^{j}\leq i\leq 4^{j}} && D:=\bigcup_{j=1}^{\infty} D_{j}
	\end{eqnarray*}
	
	Then $D$ is dense in $\RR$, if $H_{\epsilon}\left(d,x,\omega\right)$ has a cluster point for all $d\in D$, then $H(d,x,\omega)$ is independent of $(x,\omega)$ for all $d\in D$. By continuity, $H(p,x,\omega)$ is independent of $(x,\omega)$ for all $p\in\RR$, so it is already homogenized.

	Thus, assume for some $d_{0}\in D$, $H_{\epsilon}(d_{0},x,\omega)$ has no cluster point. Since $D_{j}$ is increasing, without loss of generality, assume $d_{0}=0\in D_{j}, j\in\NN$. For $j\in\NN$ and $-4^{j}\leq i\leq 4^{j}$, define
	\begin{eqnarray*}
		g_{i}^{(2^{j})}(x,\omega):=\begin{cases}
			H\left(\frac{i}{2^{j}},x,\omega\right) & \text{ if }H\left(\frac{i}{2^{j}},x,\omega\right)\text{ has no cluster point}\\
			H\left(\frac{i}{2^{j}},x,\omega\right)+\frac{1}{2^{j}}\frac{H_{\epsilon}\left(0,x,\omega\right)}{\lVert H_{\epsilon}\left(0,x,\omega\right) \rVert_{L^{\infty}(\RR\times\Omega)}+1} & \text{ if }H\left(\frac{i}{2^{j}},x,\omega\right)\text{ has a cluster point}\\
		\end{cases}
	\end{eqnarray*}

	Denote
	\begin{eqnarray*}
		\mathcal{E}_{2^{j}}:=\lbrace h_{i}^{(2^{j})}(x,\omega) \rbrace_{-4^{j}\leq i\leq 4^{j}} && \mathcal{E}_{2^{j}}^{+}:=\lbrace g_{i}^{(2^{j})}(x,\omega) \rbrace_{-4^{j}\leq i\leq 4^{j}}
	\end{eqnarray*}

	We can finish the proof by defining
	\begin{eqnarray*}
		H^{(n)}(p,x,\omega):=H(p,x,\omega)+\Delta_{\mathcal{E}_{2^{j}},\mathcal{E}_{2^{j}}^{+}}(p,x,\omega) && n=2^{j}
	\end{eqnarray*}

\end{proof}

\subsection{Approximation by constrained Hamiltonians}
In this subsection, we find a way to approximate $H(p,x,\omega)$ by $\lbrace H_{n}(p,x,\omega)\rbrace_{n\geq 1}$ in the sense of Lemma \ref{Stability}. Here each $H_{n}(p,x,\omega)$ is constrained in the following sense.

\begin{definition}[Constrained Hamiltonian]\label{constrained Hamiltonian}
	A Hamiltonian $H(p,x,\omega)$ is called constrained if it satisfies the following (1)-(5).
	
	(1)There is $k\in\NN$ and $-\infty<a_{1}<b_{1}<a_{2}<b_{2}<\cdots<a_{k-1}<b_{k-1}<a_{k}<+\infty$.
	
	(2)For each $(x,\omega)$, $H(p,x,\omega)|_{(-\infty,a_{1})}$, $H(p,x,\omega)|_{(b_{1},a_{2})}$,$\cdots$,$H(p,x,\omega)|_{(b_{k-1},a_{k})}$ are decreasing.
	
	(3)For each $(x,\omega)$, $H(p,x,\omega)|_{(a_{k},+\infty)}$, $H(p,x,\omega)|_{(a_{k-1},b_{k-1})}$,$\cdots$,$H(p,x,\omega)|(a_{1},b_{1})$ are increasing.
	
	(4)$H(p,x,\omega)$ is Lipschitz with respect to $p$(with Lipschitz constant $\mathcal{L}$), uniformly in $(x,\omega)\in\RR$.
	
	(5)Each of $H(a_{i},x,\omega),H(b_{j},x,\omega),1\leq i\leq k, 1\leq j\leq k-1$ has no cluster point.
\end{definition}

\begin{lemma}\label{approximate general coercive constrained Hamiltonian}
	If $H(p,x,\omega)$ satisfies \textbf{(A1)-(A3)}, then for $n=2^{j}, j\in\NN$, there is $H_{n}(p,x,\omega)$, such that

	(a)$\lbrace H(p,x,\omega) \rbrace_{n\geq 1}$ is uniformly coercive.
	
	(b)Each $H_{n}(p,x,\omega)$ satisfies \textbf{(A1)-(A3)}.

	(c)Each $H_{n}(p,x,\omega)$ is constrained.

	(d)Fix any $\delta>0$. Then for any compact set $K\subset\RR$, there is an $N\in\NN$,
	\begin{eqnarray*}
	\lVert H_{n}(p,x,\omega) - H(p,x,\omega) \rVert_{L^{\infty}(K\times\RR\times\Omega)}<\delta && \text{ if }n>N
	\end{eqnarray*}

\end{lemma}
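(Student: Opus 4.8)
The plan is to construct each $H_n$ in two stages, corresponding to the two subsections that precede this lemma. First I would apply Lemma \ref{approximation by cluster point free Hamiltonians} to replace $H$ by a Hamiltonian $H^{(n)}$ which satisfies \textbf{(A1)-(A3)}, agrees with $H$ up to $\frac{1}{n}$ in $L^\infty(\RR\times\RR\times\Omega)$, and whose values $H^{(n)}(\frac{i}{n},x,\omega)$ on the grid $\{\frac{i}{n}:-n^2\le i\le n^2\}$ have no cluster point (as functions of $x$); this handles the eventual property (5) of Definition \ref{constrained Hamiltonian} at the grid points and, after the monotone modification below, at the breakpoints $a_i,b_j$. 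Second, I would piecewise-linearly interpolate $H^{(n)}$ in $p$ on the grid of mesh $\frac{1}{n}$ inside $[-n,n]$, and outside $[-n,n]$ replace the function by something that is strictly monotone and coercive — for instance, extend affinely with a large positive slope past $n$ and a large negative slope past $-n$, matching the boundary values. Call this interpolant $\widetilde H^{(n)}(p,x,\omega)$. Since $H^{(n)}$ (hence $H$) is locally uniformly continuous by \textbf{(A3)}, the interpolation error $\|\widetilde H^{(n)}-H^{(n)}\|_{L^\infty(K\times\RR\times\Omega)}$ is controlled by $\rho_K(\frac{1}{n})$ on any fixed compact $K$ once $n$ is large enough that $K\subset[-n,n]$; combined with step one this gives property (d). The interpolant is automatically Lipschitz in $p$ uniformly in $(x,\omega)$ with constant $\mathcal L = \mathcal L(n)$ equal to $n$ times the maximal grid increment, plus the chosen outer slopes, giving (4); and it inherits \textbf{(A1)} (stationarity and measurability are preserved by piecewise-linear combinations of stationary functions) and \textbf{(A2)}, \textbf{(A3)} with the same or better moduli, giving (a) and (b).

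The remaining work is to force the monotonicity structure (1)--(3): I need to perturb $\widetilde H^{(n)}$ so that, for every $(x,\omega)$, it is alternately strictly decreasing and strictly increasing on a fixed finite list of intervals with $(x,\omega)$-independent endpoints $a_1<b_1<\cdots<a_k$. The natural device is to take the endpoints to be a subset of the grid points $\frac{i}{n}$, and to superimpose a small fixed deterministic "sawtooth" function $S_n(p)$ — piecewise linear, with slopes of absolute value comparable to $2\mathcal L$, changing sign only at the chosen grid points — so that $\widetilde H^{(n)} + S_n$ is genuinely monotone on each designated interval regardless of the oscillation of $\widetilde H^{(n)}$. Because $\|S_n\|_{L^\infty}$ can be made $\le \frac{1}{n}$ while its slopes are large (the sawtooth has period $O(\frac{1}{n^2})$ or one simply subdivides the grid further), this perturbation is again uniformly small on compacts, so (d) survives, and it does not destroy (A1)--(A3) or coercivity. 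One must check that adding $S_n$ does not reintroduce cluster points at $a_i,b_j$: at those points the added term is a fixed constant, so $H_n(a_i,x,\omega)=\widetilde H^{(n)}(a_i,x,\omega)+\mathrm{const}$ still has no cluster point by step one. Then $H_n := \widetilde H^{(n)} + S_n$ is the desired Hamiltonian, and (c) holds with this $k$ and these $a_i,b_j$.

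The main obstacle I anticipate is reconciling the constrained \emph{structure} with the \emph{uniform} (in $(x,\omega)$) smallness of the perturbation. A priori the number of sign changes of $p\mapsto\widetilde H^{(n)}(p,x,\omega)$ on $[-n,n]$ can depend on $(x,\omega)$ and be as large as $2n^2$, so the sawtooth that straightens all of them must have slope at least of order $\mathcal L(n)$ everywhere, hence — to keep its amplitude below $\frac{1}{n}$ — must oscillate on scale $\lesssim \frac{1}{n\,\mathcal L(n)}$, which is finer than the original grid. This is fine in principle (we are free to choose the breakpoint set finely), but it means the "constrained" count $k=k(n)$ grows with $n$; one has to be careful that this is allowed by Definition \ref{constrained Hamiltonian} (it is: $k$ is merely required to be finite for each $n$, not bounded in $n$) and that subsequent sections only use $k(n)<\infty$ per fixed $n$. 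A secondary technical point is verifying that the Gaussian mollification used in Lemma \ref{approximation by cluster point free Hamiltonians}, the piecewise-linear interpolation, and the sawtooth perturbation all preserve stationarity and the measure-preserving action simultaneously — routine, since each operation is a fixed (deterministic) continuous operation applied fibrewise in $\omega$ and commuting with the shifts $\tau_z$ — so I would state this once and not belabor it.
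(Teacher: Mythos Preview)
Your two-stage plan (cluster-point-free perturbation via Lemma~\ref{approximation by cluster point free Hamiltonians}, then piecewise-linear interpolation on a grid, then force monotonicity) matches the paper's outline, but your mechanism for forcing the monotone structure is different from the paper's and, as written, has a gap.

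The paper does \emph{not} add a separate sawtooth. Instead, after fixing the grid values $H(\frac{i}{n},x,\omega)$, it introduces \emph{midpoint} nodes $\frac{i}{n}+\frac{1}{2n}$ and assigns them the value $\max\{H(\frac{i}{n},x,\omega),H(\frac{i+1}{n},x,\omega)\}+\frac{1}{n}$, then interpolates linearly through both grid and midpoint nodes. This single step already makes the interpolant strictly increase on $[\frac{i}{n},\frac{i}{n}+\frac{1}{2n}]$ and strictly decrease on $[\frac{i}{n}+\frac{1}{2n},\frac{i+1}{n}]$ for every $(x,\omega)$, with fixed breakpoints independent of $(x,\omega)$; and the approximation error on a fixed compact $K$ is at most $\rho_K(\tfrac{1}{n})+\tfrac{1}{n}$, which is automatically small. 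No global Lipschitz constant $\mathcal L(n)$ ever enters the error bound.

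Your sawtooth device, by contrast, requires slope at least $2\mathcal L(n)$, where $\mathcal L(n)\sim n\,\rho_{[-n,n]}(\tfrac{1}{n})$ is the Lipschitz constant of the interpolant on all of $[-n,n]$. If the sawtooth breakpoints sit at the grid points $\frac{i}{n}$ (so that step one covers property~(5)), its amplitude is $\sim\rho_{[-n,n]}(\tfrac{1}{n})$, and this need \emph{not} tend to $0$: already for $H(p,x,\omega)=p^2+V(x,\omega)$ one gets $\rho_{[-n,n]}(\tfrac{1}{n})\sim 2$. Your escape hatch (``period $O(\tfrac{1}{n^2})$, or subdivide the grid'') does repair the amplitude, but then the breakpoints $a_i,b_j$ lie strictly between grid points, and $H_n(a_i,x,\omega)$ is a convex combination of two grid values plus a constant. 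Lemma~\ref{approximation by cluster point free Hamiltonians} only guarantees cluster-point-freeness at grid points, and a convex combination of two cluster-point-free functions is not automatically cluster-point-free; you would have to re-run that lemma on the finer grid, which you mention but do not carry out. None of this is fatal, but it is genuinely more work than the paper's midpoint trick, which sidesteps the whole issue by keeping the error estimate purely local in~$p$.
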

\begin{proof}
	According to Lemma \ref{approximation by cluster point free Hamiltonians}, without loss of generality, we can assume each of $H\left(\frac{i}{n},x,\omega\right)$, $-n^{2}\leq i\leq n^{2}$ has no cluster point.  We construct $H_{n}(p,x,\omega)$ by the following procedure.

	STEP 1: For each $p\in(-\infty,n)\cup(n,\infty)$, define
	\begin{eqnarray*}
		H_{n}(p,x,\omega)=\begin{cases}
			|p+n|+H(-n,x,\omega) & p\in(-\infty,-n)\\
			|p-n|+H(n,x,\omega) & p\in(n,+\infty)
		\end{cases}
	\end{eqnarray*}

	STEP 2: For $k=0,1,2,\cdots,2n^{2}$, define
	\begin{eqnarray*}
		H_{n}\left(-n+\frac{k}{n},x,\omega\right)=H\left(-n+\frac{k}{n},x,\omega\right)
	\end{eqnarray*}
	
	STEP 3: For $i=0,1,2,\cdots,2n^{2}-1$, define
	\begin{eqnarray*}
		H_{n}\left(-n+\frac{i}{n}+\frac{1}{2n},x,\omega\right)=\max\left\lbrace H\left(-n+\frac{i}{n},x,\omega\right), H\left(-n+\frac{i+1}{n},x,\omega\right) \right\rbrace+\frac{1}{n}
	\end{eqnarray*}

	STEP 4: For $i=0,1,2,\cdots,2n^{2}-1$, 
	
	(1)If $p\in\left(-n+\frac{i}{n}, -n+\frac{i}{n}+\frac{1}{2n}\right)$, then there is some $\theta\in(0,1)$, such that 
	\begin{eqnarray*}
		p=\theta\times \left(-n+\frac{i}{n}\right)+(1-\theta)\times\left(-n+\frac{i}{n}+\frac{1}{2n}\right)
	\end{eqnarray*}
	
	Then we define 
	\begin{eqnarray*}
		H_{n}(p,x,\omega) = \theta H\left(-n+\frac{i}{n},x,\omega\right)+(1-\theta)H\left(-n+\frac{i}{n}+\frac{1}{2n},x,\omega\right)
	\end{eqnarray*}

	(2)If $p\in\left(-n+\frac{i}{n}+\frac{1}{2n}, -n+\frac{i+1}{n}\right)$, then there is some $\theta\in(0,1)$, such that 
	\begin{eqnarray*}
		p=\theta\times \left(-n+\frac{i}{n}+\frac{1}{2n}\right)+(1-\theta)\times\left(-n+\frac{i+1}{n}\right)
	\end{eqnarray*}
	
	Then we define 
	\begin{eqnarray*}
		H_{n}(p,x,\omega) = \theta H\left(-n+\frac{i}{n}+\frac{1}{2n},x,\omega\right)+(1-\theta)H\left(-n+\frac{i+1}{n},x,\omega\right)
	\end{eqnarray*}

	(a)Since $H(p,x,\omega)$ satisfies \textbf{(A2)}, $\lbrace H_{n}(p,x,\omega) \rbrace_{n\geq 1}$ is uniformly coercive.

	(b)By \textbf{(A1)}, $H\left(-n+\frac{k}{n},x,\omega\right)$ is stationary, for $k = 0,1,2,\cdots,2n^{2}$. So, $H_{n}(p,x,\omega)$, as a linear combination of these functions, is stationary and satisfies \textbf{(A1)-(A3)}.
	
	(c)By the above construction, such $H_{n}(p,x,\omega)$ is constrained with $2n^{2}+1$ wells. And $H_{n}(p,x,\omega)$ has Lipschitz constant $\mathcal{L}=1+n\rho_{[-n^{2},n^{2}]}(\frac{1}{n})$ in $p$ variable, uniformly in $(x,\omega)\in\RR$.
	
	(d)By \textbf{(A3)}, there is $N\in\NN$, such that $N>\frac{3}{\delta}$, $K\subset[-N,N]$ and
	\begin{eqnarray*}
		p,q\in K,|p-q|<\frac{1}{N}\implies |H(p,x,\omega)-H(q,x,\omega)|<\frac{\delta}{3}
	\end{eqnarray*}
	
	To prove (d), it suffices to show that: fix any $k\in\lbrace 0,1,2,\cdots,2n^{2}-1 \rbrace$
	\begin{eqnarray*}
		\lVert H_{n}(p,x,\omega)-H(p,x,\omega) \rVert_{L^{\infty}((K\cap(-n+\frac{k}{n},-n+\frac{k+1}{n}))\times\RR)}<\delta
	\end{eqnarray*}

	Denote
	\begin{eqnarray*}
		p_{1}=-n+\frac{k}{n},\ p_{2}=-n+\frac{k}{n}+\frac{1}{2n},\ p_{3}=-n+\frac{k+1}{n}
	\end{eqnarray*}
	
	Without loss of generality, assume that
	\begin{eqnarray*}
		H\left(p_{1},x,\omega\right)\leq H\left(p_{3},x,\omega\right)
	\end{eqnarray*}
	
	\textbf{Case 1:} $p\in K\cap(p_{1},p_{2})$. Then there is some $\theta\in(0,1)$ such that $p=\theta p_{1}+(1-\theta)p_{2}$,
	\begin{eqnarray*}
		|H_{n}(p,x,\omega)-H(p,x,\omega)|&=&|H_{n}(\theta p_{1}+(1-\theta)p_{2},x,\omega)-H(\theta p_{1}+(1-\theta)p_{2},x,\omega)|\\
		&=& \Big|\theta H(p_{1},x,\omega)+(1-\theta)\left[H(p_{3},x,\omega)+\frac{1}{n}\right]\\
		&& \ \ \ -H(\theta p_{1}+(1-\theta)p_{2},x,\omega)\Big|\\
		&\leq& \theta |H(p_{1},x,\omega)- H(\theta p_{1}+(1-\theta)p_{2},x,\omega)|\\
		&& +(1-\theta)|H(p_{3},x,\omega)-H(\theta p_{1}+(1-\theta)p_{2},x,\omega)|+\frac{1-\theta}{n}\\
		&<&\delta
	\end{eqnarray*}

	\textbf{Case 2:} $p\in K\cap(p_{2},p_{3})$. Then there is some $\theta\in(0,1)$ such that $p=\theta p_{2}+(1-\theta)p_{3}$,
	\begin{eqnarray*}
		|H_{n}(p,x,\omega)-H(p,x,\omega)|&=&|H_{n}(\theta p_{2}+(1-\theta)p_{3},x,\omega)-H(\theta p_{2}+(1-\theta)p_{3},x,\omega)|\\
		&=& \Big|\theta \left[H(p_{3},x,\omega)+\frac{1}{n}\right]+(1-\theta)H(p_{3},x,\omega)\\
		&& \ \ \ -H(\theta p_{1}+(1-\theta)p_{2},x,\omega)\Big|\\
		&\leq& |H(p_{3},x,\omega)-H(\theta p_{1}+(1-\theta)p_{2},x,\omega)|+\frac{\theta}{n}\\
		&<&\delta
	\end{eqnarray*}

	The above is true for all $k=0,1,2,\cdots,2n^{2}-1$, thus
	\begin{equation*}
	\lVert H_{n}(p,x,\omega) - H(p,x,\omega) \rVert_{L^{\infty}(K\times\RR\times\RR)}<\delta
	\end{equation*} 
	
\end{proof}

\begin{remark}
	By Lemma \ref{Stability} and Lemma \ref{approximate general coercive constrained Hamiltonian}, to prove Theorem \ref{main theorem}, it suffices to consider such Hamiltonian $H(p,x,\omega)$ that is constrained(\ref{constrained Hamiltonian}) and satisfies \textbf{(A1)-(A3)}. So in the following sections, we only consider constrained Hamiltonians.
\end{remark}

\subsection{Constrained Hamiltonian with index $(\widetilde{L},L)$}
\begin{definition}
	$H(p,x,\omega)$ is called constrained Hamiltonian with index $(\widetilde{L},L)$ if
	
	(1)$H(p,x,\omega)$ is constrained (\ref{constrained Hamiltonian}).
	
	(2)$(a_{1},b_{1},a_{2},b_{2},\cdots,a_{k-1},b_{k-1},a_{k})=(\widetilde{p}_{1},\widetilde{q}_{1},\widetilde{p}_{2},\widetilde{q}_{2},\cdots,\widetilde{p}_{\widetilde{L}},\widetilde{q}_{\widetilde{L}},0,q_{L},p_{L},q_{L-1},p_{L-1},\cdots,q_{1},p_{1})$.
	
	(3)$\esssup\limits\limits_{(x,\omega)}H(\widetilde{p}_{i},x,\omega)>0, 1\leq i\leq \widetilde{L}$; $\esssup\limits\limits_{(x,\omega)}H(0,x,\omega)=0$; $\esssup\limits\limits_{(x,\omega)}H(p_{j},x,\omega)>0,1\leq j\leq L$.
	
	(4)Each of $H(a_{i},x,\omega), H(b_{i},x,\omega), 1\leq i\leq k$ has no cluster point.

\end{definition}

\begin{remark}
	Apply perturbation and shift coordinates if necessary, it suffices to consider homogenization of any constrained Hamiltonian with index $(\widetilde{L},L)$. The following example is a constrained Hamiltonian with index $(1,2)$.
\end{remark}

\begin{figure}[h]
\centering
\includegraphics[width=0.4\linewidth]{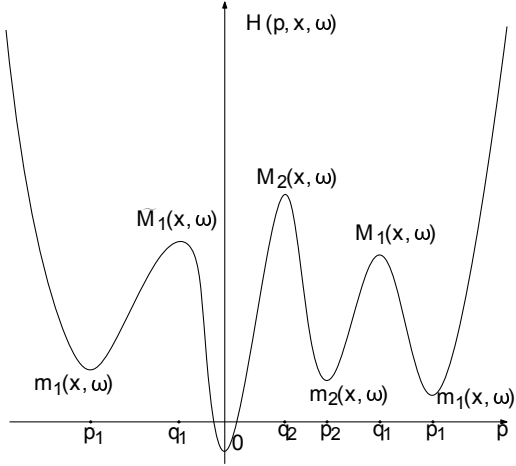}
\caption{Constrained Hamiltonian with index $(1,2)$}
\label{fig:TheHamiltonian4Wells_withcoord}
\end{figure}

\begin{notation}
	
	Let $H(p,x,\omega)$ be a constrained Hamiltonian with index $(\widetilde{L},L)$.
	
	(1)For each $(x,\omega)$, denote monotone branches of $H(p,x,\omega)$ by
	\begin{eqnarray*}
		& H|_{[p_{1},\infty)}:=\phi_{1,(x,\omega)}(p),\
		H|_{[q_{1},p_{1}]}:=\phi_{2,(x,\omega)}(p),\ \cdots,\ H|_{[0,q_{L}]}:=\phi_{2L+1,(x,\omega)}(p)&\\
		& H|_{[-\infty,\widetilde{p}_{1})}:=\widetilde{\phi}_{1,(x,\omega)}(p),\ H|_{[\widetilde{p}_{1},\widetilde{q}_{1}]}:=\widetilde{\phi}_{2,(x,\omega)}(p),\ \cdots,\ H|_{[\widetilde{q}_{\widetilde{L}},0]}:=\widetilde{\phi}_{2\widetilde{L}+1,(x,\omega)}(p)&
	\end{eqnarray*}

(2)Denote inverse function of each branch by 
	\begin{eqnarray*}
		\left(\phi_{i,(x,\omega)}(\cdot)\right)^{-1}:=\psi_{i,(x,\omega)}(\cdot) && \left(\widetilde{\phi}_{i,(x,\omega)}(\cdot)\right)^{-1}:=\widetilde{\psi}_{i,(x,\omega)}(\cdot)
	\end{eqnarray*}

(3)Denote local extreme values by
	\begin{eqnarray*}
		m_{i}(x,\omega):= H(p_{i},x,\omega)&&
		\widetilde{m}_{i}(x,\omega):= H(\widetilde{p}_{i},x,\omega)\\
		M_{i}(x,\omega):= H(q_{i},x,\omega)&&
		\widetilde{M}_{i}(x,\omega):= H(\widetilde{q}_{i},x,\omega)
	\end{eqnarray*}
	
(4)Define two functions	
	\begin{eqnarray}
	\label{m}m(x,\omega)&:=&\min\left\lbrace \min_{1\leq i\leq L}m_{i}(x,\omega), \min_{1\leq j\leq\widetilde{L}}\widetilde{m}_{j}(x,\omega) \right\rbrace\\
	\label{M}M(x,\omega)&:=&\max\left\lbrace \max_{1\leq i\leq L}M_{i}(x,\omega), \max_{1\leq j\leq\widetilde{L}}\widetilde{M}_{j}(x,\omega) \right\rbrace
	\end{eqnarray}

\end{notation}

\section{Auxiliary Lemmas for Gluing Lemmas}\label{Auxiliary Lemmas for Gluing Lemmas}
\subsection{Estimation of Gradient}

\begin{lemma}\label{Two points control}
Let Hamiltonian $H(p,x,\omega)$ satisfy $\textbf{(A1)-(A3)}$ and be regularly homogenizable at $p_{0}$, for each $\lambda>0$, let $v_{\lambda}(x,p_{0},\omega)$ be the viscosity solution of the equation:
\begin{eqnarray*}
\lambda v_{\lambda}+H(p_{0}+v_{\lambda}^{\prime},x,\omega)=0
\end{eqnarray*}
fix $P\in\RR$, denote $\underline{P}:=\essinf\limits\limits_{(x,\omega)} H(P,x,\omega)$ and $\overline{P}:=\esssup\limits\limits_{(x,\omega)} H(P,x,\omega)$, then, there is an $\widetilde{\Omega}\subset\Omega$ with $\mathbf{P}[\widetilde{\Omega}]=1$, such that, for each $\omega\in\widetilde{\Omega}$, the following hold.

$(1)$If $\overline{H}(p_{0})<\underline{P}$, $p_{0}<P$, then for any $R>0$, there is $\lambda_{0}=\lambda_{0}(R,p_{0},\omega)>0$,
\begin{eqnarray*}
0<\lambda<\lambda_{0}\implies p_{0}+v_{\lambda}^{\prime}(x,p_{0},\omega)\leq P & x\in\left[-\frac{R}{\lambda},\frac{R}{\lambda}\right]
\end{eqnarray*}

$(2)$If $\overline{H}(p_{0})<\underline{P}$, $p_{0}>P$, then for any $R>0$, there is $\lambda_{0}=\lambda_{0}(R,p_{0},\omega)>0$,
\begin{eqnarray*}
0<\lambda<\lambda_{0}\implies p_{0}+v_{\lambda}^{\prime}(x,p_{0},\omega)\geq P& x\in\left[-\frac{R}{\lambda},\frac{R}{\lambda}\right]
\end{eqnarray*}

$(3)$If $\overline{H}(p_{0})>\overline{P}$, $p_{0}<P$, then for any $R>0$, there is $\lambda_{0}=\lambda_{0}(R,p_{0},\omega)>0$,
\begin{eqnarray*}
0<\lambda<\lambda_{0}\implies p_{0}+v_{\lambda}^{\prime}(x,p_{0},\omega)\leq P & x\in\left[-\frac{R}{\lambda},\frac{R}{\lambda}\right]
\end{eqnarray*}

$(4)$If $\overline{H}(p_{0})>\overline{P}$, $p_{0}>P$, then for any $R>0$, there is $\lambda_{0}=\lambda_{0}(R,p_{0},\omega)>0$, 
\begin{eqnarray*}
0<\lambda<\lambda_{0}\implies p_{0}+v_{\lambda}^{\prime}(x,p_{0},\omega)\geq P & x\in\left[-\frac{R}{\lambda},\frac{R}{\lambda}\right]
\end{eqnarray*}
\end{lemma}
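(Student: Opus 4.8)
The plan is to prove all four cases by a barrier/comparison argument using the sub- and super-solution structure coming from the effective Hamiltonian, then reading off the gradient bound from coercivity and monotonicity of $H$ in $p$. I will focus on case $(1)$; cases $(2)$--$(4)$ follow by the obvious symmetries (reflecting $x\mapsto -x$ or reversing inequalities). The key observation is that $\overline{H}(p_0)<\underline{P}$ means the quantity $-\lambda v_\lambda$, which lives in $[H_{\inf}(p_0),H_{\sup}(p_0)]$ and converges to $\overline{H}(p_0)$, is eventually strictly below $\underline{P}\le H(P,x,\omega)$ for every $(x,\omega)$; hence at any point where $p_0+v_\lambda'$ would exceed $P$, the equation $\lambda v_\lambda + H(p_0+v_\lambda',x,\omega)=0$ together with the one-sidedness of $H$ in $p$ would force $H(p_0+v_\lambda',x,\omega)\ge$ something incompatible with the value of $-\lambda v_\lambda$.

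First I would invoke the remark after the definition of regular homogenizability, together with Lemma~\ref{Stability}'s boundedness observation, to fix $\widetilde\Omega$ with $\mathbf P[\widetilde\Omega]=1$ on which $\lim_{\lambda\to 0}\lambda v_\lambda(0,p_0,\omega)=-\overline{H}(p_0)$; by the stability result of Armstrong--Souganidis quoted in the first remark this upgrades to $\limsup_{\lambda\to 0}\max_{|x|\le R/\lambda}|\lambda v_\lambda(x,p_0,\omega)+\overline{H}(p_0)|=0$ for every $R>0$. Also apply Lemma~\ref{for a.e. omega} to the stationary function $H(P,\cdot,\cdot)$ so that on $\widetilde\Omega$ the essential infimum over $x$ equals $\underline P$. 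Then fix $R>0$, pick $\eta>0$ with $\overline{H}(p_0)+\eta<\underline P$, and choose $\lambda_0$ so small that for $0<\lambda<\lambda_0$ we have $-\lambda v_\lambda(x,p_0,\omega)<\overline{H}(p_0)+\eta$ for all $|x|\le R/\lambda$.

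Next, the heart of the matter: suppose toward a contradiction that $p_0+v_\lambda'(x_0,p_0,\omega)>P$ at some $x_0\in[-R/\lambda,R/\lambda]$, for some $\lambda<\lambda_0$. Since $p_0<P$, continuity of $v_\lambda'$ (it is $W^{1,\infty}$ with gradient bounded by $r(p_0)$ from coercivity) lets me find a nearby point where $v_\lambda$ is touched from below by a smooth test function with slope exactly at $P-p_0$, or more cleanly, I would compare $v_\lambda$ with the affine function $x\mapsto (P-p_0)x + c$ adjusted so they touch: at a point where the affine barrier touches $v_\lambda$ from above with $v_\lambda'$ having just crossed $P-p_0$, the subsolution inequality gives $\lambda v_\lambda(x) + H(P,x,\omega)\le 0$, i.e. $H(P,x,\omega)\le -\lambda v_\lambda(x)<\overline H(p_0)+\eta<\underline P$, contradicting $H(P,x,\omega)\ge\underline P$. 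To make the touching rigorous I would instead argue directly: on the (open) set where $v_\lambda'>P-p_0$, monotonicity/Lipschitz structure is not even needed—wherever $v_\lambda$ is differentiable with $p_0+v_\lambda'(x)\ge P$ the equation reads $H(p_0+v_\lambda'(x),x,\omega)=-\lambda v_\lambda(x)$, and I must deduce a contradiction, but this requires controlling $H$ at the larger argument $p_0+v_\lambda'(x)$ rather than at $P$. This is where the \emph{constrained} structure (which, per the remarks, we may assume) enters: on $[p_1,\infty)$, $[q_1,p_1]$, $[0,q_L]$ the Hamiltonian is monotone, so on the decreasing branch through $P$ (say $P\in[0,q_L]$, where $H(\cdot,x,\omega)$ is decreasing) having argument $\ge P$ forces $H\le H(P,x,\omega)$, which is again $\le -\lambda v_\lambda<\underline P\le H(P,x,\omega)$—impossible unless the argument equals $P$; a limiting argument along level sets then pins the gradient at $\le P$.

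The main obstacle, and the step I would spend the most care on, is exactly this last passage: converting the asymptotic scalar bound $-\lambda v_\lambda \approx \overline H(p_0)<\underline P$ into a \emph{pointwise, uniform-on-$[-R/\lambda,R/\lambda]$} gradient bound. The subtlety is that $p_0+v_\lambda'$ could in principle jump across several monotone branches of $H$, so "being $\ge P$" does not immediately bound $H$ by $H(P,\cdot,\cdot)$ unless we know which branch we are on. I expect to handle this by a careful case analysis on the position of $P$ among the breakpoints $\{a_i,b_j,0,q_\ell,p_\ell,\widetilde p_\ell,\widetilde q_\ell\}$, using in each case that the relevant extremal value ($\underline P$ or $\overline P$) dominates/is dominated by $\overline H(p_0)$ to rule out the offending branch, together with a connectedness argument: $p_0+v_\lambda'$ starts below $P$ (since at a point realizing $\min$ of $v_\lambda$ we have $v_\lambda'=0$, $\lambda v_\lambda = -H(p_0,x,\omega)$, and coercivity plus $\overline H(p_0)<\underline P$ keeps it on the correct side), so by continuity it cannot reach the far side of $P$ without passing through $P$ itself, at which point the equation is violated. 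Cases $(3)$--$(4)$, where $\overline H(p_0)>\overline P$, are dual: there the relevant branch is the increasing one and $-\lambda v_\lambda$ is eventually $>\overline P\ge H(P,x,\omega)$, forcing the argument to stay on the appropriate side of $P$.
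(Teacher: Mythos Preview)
Your first instinct—touching $v_\lambda$ from above by the affine function $(P-p_0)x+c$ and reading off the viscosity subsolution inequality $\lambda v_\lambda(z)+H(P,z,\omega)\le 0$ at the touching point—is exactly the right mechanism, and it is how the paper proceeds. The contradiction then comes immediately from $H(P,z,\omega)\ge\underline P>\overline H(p_0)+\eta>-\lambda v_\lambda(z)$, with no need for any monotonicity or branch structure of $H$. The lemma holds under \textbf{(A1)--(A3)} alone; invoking the constrained structure is unnecessary and, as written, your branch analysis does not actually produce a contradiction (from $H(p_0+v_\lambda',x,\omega)=-\lambda v_\lambda<\underline P\le H(P,x,\omega)$ you only get $-\lambda v_\lambda<H(P,x,\omega)$, which is consistent, not contradictory).

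The genuine gap is in how you \emph{locate} the touching point. You appeal to ``continuity of $v_\lambda'$'' and a connectedness argument, but $v_\lambda\in W^{1,\infty}$ is only Lipschitz and $v_\lambda'$ need not be continuous, so this fails. The paper's device is the following: assuming $p_0+v_{\lambda_n}'(x_{\lambda_n})>P$ at some $x_{\lambda_n}\in[-R_1/\lambda_n,R_1/\lambda_n]$, use the \emph{a priori} bound $|\lambda v_\lambda|\le H_{\sup}(p_0)-H_{\inf}(p_0)$ to see that, over any interval of length $R_2/\lambda$ with $R_2$ large (depending only on $\delta:=P-p_0>0$), the average of $v_\lambda'$ is smaller than $\delta/2$, hence there exists $y_{\lambda_n}\in(R_1/\lambda_n,(R_1+R_2)/\lambda_n)$ with $p_0+v_{\lambda_n}'(y_{\lambda_n})<P$. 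Then $\Psi(x):=v_{\lambda_n}(x)-(P-p_0)x$ is increasing near $x_{\lambda_n}$ and decreasing near $y_{\lambda_n}$, so it attains a local maximum at some $z_{\lambda_n}\in(x_{\lambda_n},y_{\lambda_n})$, which is precisely the touching point you need. One must also take $\widetilde\Omega$ so that the uniform convergence $-\lambda v_\lambda\to\overline H(p_0)$ holds on the enlarged window $[-(R_1+R_2)/\lambda,(R_1+R_2)/\lambda]$. Once you insert this step, your original affine-touching argument goes through cleanly and the detour through branches and gradient continuity can be deleted.
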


\begin{proof}[Proof of periodic case]
	(1)For $p_{0}$, we have the cell problem
	\begin{equation*}
	H(p_{0}+v^{\prime},x)=\overline{H}(p_{0})
	\end{equation*}
	
	Suppose (1) is not true, then there is $x_{1}\in[0,1]$, such that $p_{0}+v^{\prime}(x_{1})>P$. On the other hand,	
	\begin{equation*}
	\int_{1}^{2}p_{0}+v^{\prime}(x)-Pdx=p_{0}-P<0
	\end{equation*}
	
	So there is some $y_{1}\in[1,2]$, such that $p_{0}+v^{\prime}(y_{1})-P<0$. Then $\Psi(x):=p_{0}+v(x)-Px$ attains local maximum at some $z_{1}\in[x_{1},y_{1}]$, so
	\begin{eqnarray*}
	\underline{P}\leq H(P,z_{1})\leq \overline{H}(p_{0})<\underline{P}
	\end{eqnarray*}
	
	This is a contradiction, so we proved (1). The proofs of (2)(3)(4) are similar.
\end{proof}

\begin{proof}[Proof of random case]
(1)If it is not true, then there is $\Omega_{1}\subset\Omega$, $\mathbf{P}[\Omega_{1}]>0$, for any $\omega\in\Omega_{1}$, there are $R_{1}=R_{1}(p_{0},\omega)>0$ and $\lambda_{n}\rightarrow 0$ such that
\begin{eqnarray*}
p_{0}+v_{\lambda_{n}}^{\prime}(x_{\lambda_{n}},p_{0},\omega)>P &\text{ for some } x_{\lambda_{n}}\in\left[-\frac{R_{1}}{\lambda_{n}},\frac{R_{1}}{\lambda_{n}}\right]
\end{eqnarray*}

Denote $\delta:=P-p_{0}>0$. For any $R>0$, we have
\begin{eqnarray*}
\left|\frac{\lambda}{R}\int_{\frac{R_{1}}{\lambda}}^{\frac{R+R_{1}}{\lambda}}v_{\lambda}^{\prime}(s,\omega)ds\right|\leq \frac{2\big(H_{\sup}(p_{0})-H_{\inf}(p_{0})\big)}{R}
\end{eqnarray*}

Fix any $R_{2}=R_{2}(p_{0})>\frac{4(H_{\sup}(p_{0})-H_{\inf}(p_{0}))}{\delta}$, thus for any $R\geq R_{2}$, we have
\begin{eqnarray*}
\left|\frac{\lambda}{R}\int_{\frac{R_{1}}{\lambda}}^{\frac{R+R_{1}}{\lambda}}v_{\lambda}^{\prime}(s,p_{0},\omega)ds\right|<\frac{\delta}{2} 
\text{ for any } \lambda>0
\end{eqnarray*}

So
\begin{eqnarray*}
\frac{\lambda_{n}}{R_{2}}\int_{\frac{R_{1}}{\lambda_{n}}}^{\frac{R_{2}+R_{1}}{\lambda_{n}}}p_{0}+v_{\lambda_{n}}^{\prime}(s,p_{0},\omega)-Pds\leq p_{0}-P+\frac{\delta}{2}<0
\end{eqnarray*} 

This implies
\begin{eqnarray*}
p_{0}+v_{\lambda_{n}}^{\prime}(y_{\lambda_{n}},\omega)-P<0 & \text{ for some } y_{\lambda_{n}}\in \left(\frac{R_{1}}{\lambda_{n}},\frac{R_{2}+R_{1}}{\lambda_{n}}\right)
\end{eqnarray*}

Denote $\Psi(x,\omega)=p_{0}x+v_{\lambda_{n}}(x,\omega)-Px$, then
\begin{eqnarray*}
\Psi(x,\omega) \text{ is increasing (decreasing) in a neighborhood of } x_{\lambda_{n}}(y_{\lambda_{n}})
\end{eqnarray*}

Since $x_{\lambda_{n}}<y_{\lambda_{n}}$, $\Psi(x,\omega)$ attains local maximum at some  $z_{\lambda_{n}}\in\left(x_{\lambda_{n}},y_{\lambda_{n}}\right)$. So
\begin{eqnarray}\label{local max ineq}
\lambda_{n}v_{\lambda_{n}}(z_{\lambda_{n}},\omega)+H(P,z_{\lambda_{n}},\omega)\leq 0 
\end{eqnarray}

Since $H(p,x,\omega)$ is regularly homogenizable at $p_{0}$, there is $\Omega_{2}\subset\Omega$, s.t. $\mathbf{P}[\Omega_{2}]=1$, 
\begin{eqnarray*}
\limsup_{\lambda\rightarrow 0}\sup_{|x|\leq\frac{R_{1}+R_{2}}{\lambda}}|\lambda v_{\lambda}(x,\omega)+\overline{H}(p_{0})|=0 \text{ for each }\omega\in \Omega_{2}
\end{eqnarray*}

Denote $\tau:=\underline{P}-\overline{H}(p_{0})>0$, $\hat{\Omega}:=\Omega_{1}\cap\Omega_{2}$. So there is some $N_{1}(\omega)$, 
\begin{eqnarray}\label{cover to bar H}
\sup_{|x|\leq\frac{R_{1}+R_{2}}{\lambda_{n}}}|\lambda_{n}v_{\lambda_{n}}+\overline{H}(p_{0})|<\frac{\tau}{2} \text{ for any }n\geq N_{1}
\end{eqnarray}
\begin{eqnarray*}
\mathbf{P}[\Omega_{1}]>0,
\mathbf{P}[\Omega_{2}]=1 \implies \mathbf{P}[\hat{\Omega}]>0\implies \hat{\Omega}\ne\emptyset
\end{eqnarray*}

Choose any $\omega\in\hat{\Omega}$ and $n\geq N_{1}(\omega)$, by (\ref{local max ineq}) and (\ref{cover to bar H}),
\begin{eqnarray*}
& \underline{P}\leq H(P,z_{\lambda_{n}},\omega)\leq -\lambda_{n}v_{\lambda_{n}}(z_{\lambda_{n}},\omega)
\leq \overline{H}(p_{0})+\frac{\tau}{2}
= \underline{P}-\tau +\frac{\tau}{2}
=\underline{P}-\frac{\tau}{2}
\end{eqnarray*}

This is a contradiction. Thus (1) is proved. The proofs of (2)(3)(4) are similar.

\end{proof}

\begin{lemma}\label{Three points control}
Let Hamiltonian $H(p,x,\omega)$ satisfy \textbf{(A1)-(A3)} and be regularly homogenizable at $p_{0}\in\RR$ to $\overline{H}(p_{0})$, for each $\lambda$, let $v_{\lambda}(x)$ be the viscosity solution of the following equation:
\begin{eqnarray*}
\lambda v_{\lambda}(x)+H(p_{0}+v_{\lambda}^{\prime}(x),x,\omega)=0
\end{eqnarray*}
For $P,Q\in\RR$, denote 
\begin{eqnarray*}
\underline{P}:=\essinf\limits\limits_{(x,\omega)} H(P,x,\omega) &&
\overline{P}:=\esssup\limits\limits_{(x,\omega)} H(P,x,\omega)\\
\underline{Q}:=\essinf\limits\limits_{(x,\omega)} H(Q,x,\omega) &&
\overline{Q}:=\esssup\limits\limits_{(x,\omega)} H(Q,x,\omega)
\end{eqnarray*}
Then, there is an $\widetilde{\Omega}\subset\Omega$ with $\mathbf{P}[\tilde{\Omega}]=1$, such that for each $\omega\in\tilde{\Omega}$, the following hold.

$(1)$If $p_{0}<P$, $P<Q$ and $\overline{P}<\underline{Q}$, then for each $R>0$, there is $\lambda_{0}=\lambda_{0}(R,p_{0},\omega)$, 
\begin{eqnarray*}
0<\lambda<\lambda_{0}\implies p_{0}+v_{\lambda}^{\prime}(x)\leq Q & x\in\left[-\frac{R}{\lambda},\frac{R}{\lambda}\right]
\end{eqnarray*}

$(2)$If $p_{0}<P$, $P<Q$ and $\underline{P}>\overline{Q}$, then for each $R>0$, there is $\lambda_{0}=\lambda_{0}(R,p_{0},\omega)$,
\begin{eqnarray*}
0<\lambda<\lambda_{0}\implies p_{0}+v_{\lambda}^{\prime}(x)\leq Q & x\in\left[-\frac{R}{\lambda},\frac{R}{\lambda}\right]
\end{eqnarray*}

$(3)$If $p_{0}>P$, $P>Q$ and $\overline{P}<\underline{Q}$, then for each $R>0$, there is $\lambda_{0}=\lambda_{0}(R,p_{0},\omega)$,
\begin{eqnarray*}
0<\lambda<\lambda_{0}\implies p_{0}+v_{\lambda}^{\prime}(x)\geq Q & x\in\left[-\frac{R}{\lambda},\frac{R}{\lambda}\right]
\end{eqnarray*}

$(4)$If $p_{0}>P$, $P>Q$ and $\underline{P}>\overline{Q}$, then for each $R>0$, there is $\lambda_{0}=\lambda_{0}(R,p_{0},\omega)$,
\begin{eqnarray*}
0<\lambda<\lambda_{0}\implies p_{0}+v_{\lambda}^{\prime}(x)\geq Q & x\in\left[-\frac{R}{\lambda},\frac{R}{\lambda}\right]
\end{eqnarray*}

\end{lemma}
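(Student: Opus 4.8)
The plan is to derive this statement as a short corollary of Lemma \ref{Two points control}: all the viscosity-solution analysis (the estimate on the averages of $v_\lambda'$, the local-extremum touching argument, and the use of regular homogenizability at $p_0$) is already contained there, so what remains is a case analysis on the position of the fixed real number $\overline H(p_0)$. I would first apply Lemma \ref{Two points control} once with its distinguished point taken to be $P$ and once with it taken to be $Q$ — note that that lemma only requires regular homogenizability at the base point $p_0$, which is assumed, and not at $P$ or $Q$ — obtaining full-measure sets $\widetilde\Omega_P,\widetilde\Omega_Q$, and set $\widetilde\Omega:=\widetilde\Omega_P\cap\widetilde\Omega_Q$, so $\mathbf P[\widetilde\Omega]=1$. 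Fix $\omega\in\widetilde\Omega$ for the rest of the argument.

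Take case $(1)$: $p_0<P<Q$ and $\overline P<\underline Q$. Exactly one of $\overline H(p_0)<\underline Q$ or $\overline H(p_0)\ge\underline Q$ holds. In the first sub-case $p_0<Q$, and part $(1)$ of Lemma \ref{Two points control} at the point $Q$ gives, for every $R>0$, a threshold $\lambda_0$ below which $p_0+v_\lambda'(x)\le Q$ on $[-R/\lambda,R/\lambda]$. In the second sub-case $\overline H(p_0)\ge\underline Q>\overline P$, so $\overline H(p_0)>\overline P$ while $p_0<P$, and part $(3)$ of Lemma \ref{Two points control} at the point $P$ gives $p_0+v_\lambda'(x)\le P\le Q$ on $[-R/\lambda,R/\lambda]$ for $\lambda$ small; here it is crucial that $P$ lies between $p_0$ and $Q$, so the bound at $P$ is stronger than the one claimed. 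Either way the conclusion of $(1)$ holds, with $\lambda_0=\lambda_0(R,p_0,\omega)$ taken to be the threshold from whichever sub-case occurs. Case $(2)$ ($p_0<P<Q$, $\underline P>\overline Q$) is handled the same way using the dichotomy $\overline H(p_0)>\overline Q$ versus $\overline H(p_0)\le\overline Q<\underline P$: in the first sub-case part $(3)$ at $Q$ applies, and in the second part $(1)$ at $P$ applies and yields $p_0+v_\lambda'\le P\le Q$.

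Cases $(3)$ and $(4)$ are the mirror images, with $p_0>P>Q$; one uses parts $(2)$ and $(4)$ of Lemma \ref{Two points control} in place of $(1)$ and $(3)$, and the dichotomies are the same as in cases $(1)$ and $(2)$ respectively. For instance, in case $(3)$: if $\overline H(p_0)<\underline Q$, part $(2)$ at $Q$ (valid since $p_0>Q$) gives $p_0+v_\lambda'\ge Q$; otherwise $\overline H(p_0)\ge\underline Q>\overline P$, and part $(4)$ at $P$ gives $p_0+v_\lambda'\ge P\ge Q$. Case $(4)$ proceeds identically with $\overline H(p_0)>\overline Q$ versus $\overline H(p_0)\le\overline Q<\underline P$.

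The only point that needs care is checking that the two sub-cases in each of the four situations are genuinely exhaustive — and this is precisely where the strict orderings $\overline P<\underline Q$ and $\underline P>\overline Q$ are used, combined with the trivial inequalities $\underline P\le\overline P$ and $\underline Q\le\overline Q$. Beyond that the proof is pure bookkeeping: keeping straight which of the four conclusions of Lemma \ref{Two points control} to invoke and at which point. I do not expect any genuine analytic obstacle, since the heavy lifting has already been carried out in the preceding lemma.
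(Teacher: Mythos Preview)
Your proposal is correct and follows essentially the same approach as the paper: reduce to Lemma \ref{Two points control} via a case analysis on the position of $\overline H(p_0)$. The paper uses a three-way split on whether $\overline H(p_0)$ lies below $\underline P$, above $\overline P$, or in $[\underline P,\overline P]$ (applying Lemma \ref{Two points control} at $P$ in the first two cases and at $Q$ in the third), whereas your two-way dichotomy on $\overline H(p_0)$ versus $\underline Q$ is a slightly more economical reorganization of the same argument.
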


\begin{proof}
(1)Case 1: $\overline{H}(p_{0})<\underline{P}$, apply (1) of Lemma \ref{Two points control} to $(p_{0},P)$.

Case 2: $\overline{H}(p_{0})>\overline{P}$, apply (3) of Lemma \ref{Two points control} to $(p_{0},P)$.

Case 3: $\overline{H}(p_{0})\in\left[\underline{P},\overline{P}\right]$, apply (1) of Lemma \ref{Two points control} to $(p_{0},Q)$.

The proofs of (2)(3)(4) are similar.

\end{proof}

\subsection{Squeeze Lemma}

\begin{figure}[h]
\centering
\includegraphics[width=0.4\linewidth]{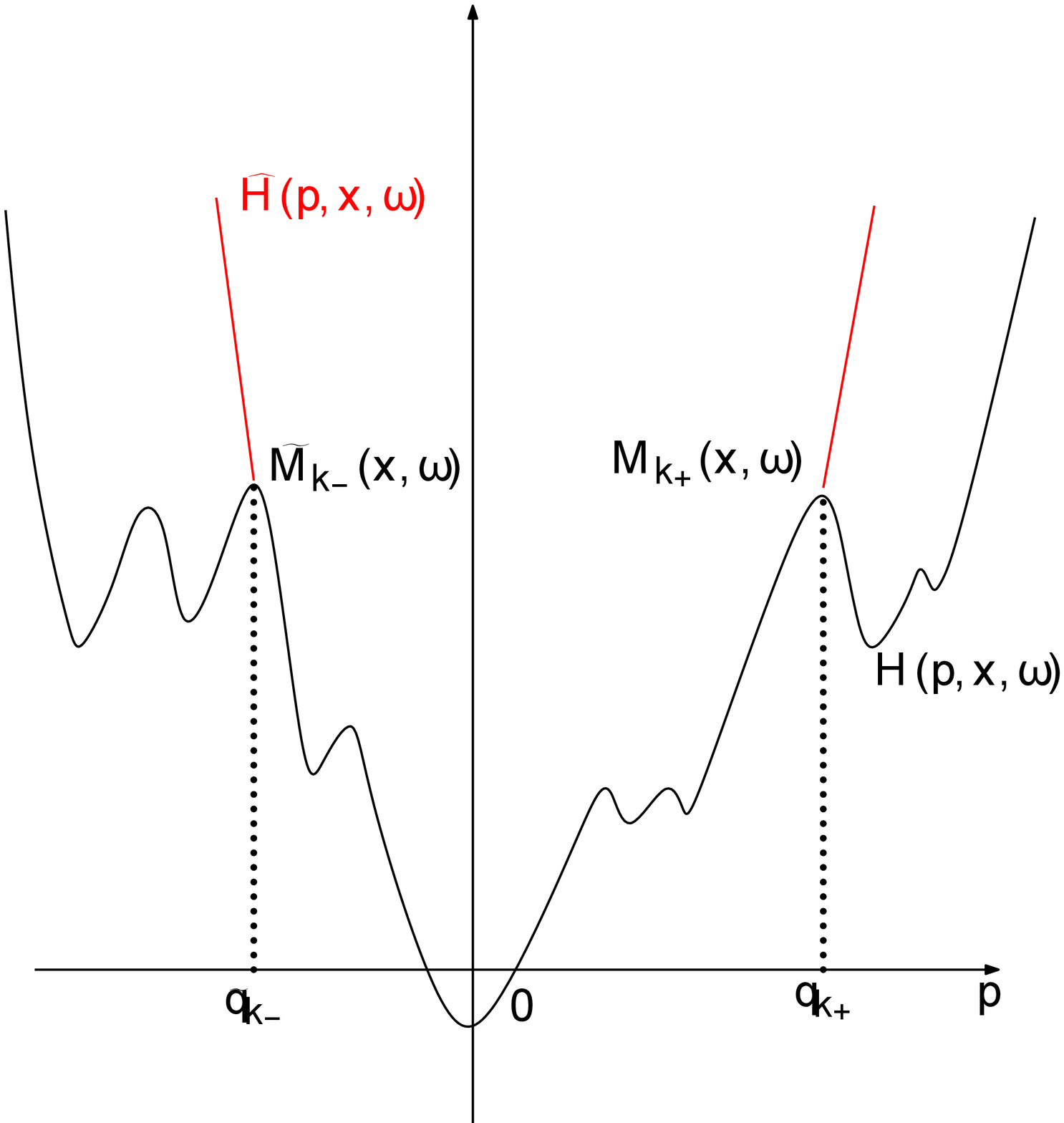}
\caption{Squeeze}
\label{fig:squeeze}
\end{figure}

\begin{lemma}\label{squeeze lemma}
Let $H(p,x,\omega)$ satisfy \textbf{(A1)-(A3)} and be constrained with index $(\widetilde{L},L)$,
if $H(p,x,\omega)$ has effective Hamiltonian $\overline{H}(p)$ with $\overline{H}(q)=0$, then the following are true.

$(1)$If $q>0$ and $\overline{H}|_{(q,+\infty)}>0$, then $\overline{H}(p)\equiv 0 \text{ for all }p\in[0,q]$.

$(2)$If $q<0$ and $\overline{H}|_{(-\infty,q)}>0$, then $\overline{H}(p)\equiv 0 \text{ for all }p\in[q,0]$.
\end{lemma}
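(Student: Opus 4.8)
The plan is to prove $(1)$ and obtain $(2)$ from it by the reflection $p\mapsto -p$, which interchanges the two families of branches $\phi_{i,(x,\omega)}$ and $\widetilde{\phi}_{i,(x,\omega)}$. First I would reduce the problem: since $\overline H\in C(\RR)$ and, by Corollary \ref{closedness of regularly homogenizable points}, the set of momenta at which $H$ is regularly homogenizable is closed with $\overline H$ continuous on its closure, it suffices to prove $\overline H(p)=0$ for $p$ in a dense subset of the open interval $(0,q)$, so I may assume $H(p,\cdot,\cdot)$ has no cluster point; this makes the level sets $\{s:H(s,x,\omega)=c\}$ discrete, which is exactly what the first-hitting arguments below require. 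I would then fix $\omega$ in the full-measure set on which the conclusions of Lemma \ref{for a.e. omega}, Lemma \ref{Two points control}, Lemma \ref{Three points control}, regular homogenizability at every rational momentum, and Birkhoff's ergodic theorem for $\{\tau_y\}$ all hold, and write $v_\lambda=v_\lambda(\cdot,p,\omega)$, $w_\lambda=v_\lambda(\cdot,q,\omega)$ for the solutions of the corresponding $\lambda$-problems; since $\overline H(q)=0$, we have $-\lambda w_\lambda\to 0$ uniformly on each box $[-\frac R\lambda,\frac R\lambda]$.

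The key tool I would set up is a \emph{branch monotonicity} property of $\overline H$: on each closed monotone branch $J$ of the constrained Hamiltonian $H$, $\overline H$ inherits the (weak) monotonicity of $H|_J$. To prove this for $p_1<p_2$ lying in $J$, I would use Lemma \ref{Two points control} and Lemma \ref{Three points control}, applied at the local extrema flanking $J$ (whose $\overline H$-values are already controlled, proceeding by induction on the branches), to confine $p_1+v_\lambda'(\cdot,p_2,\omega)$ to $J$ on a large box for $\lambda$ small; then $v_\lambda(\cdot,p_2,\omega)$ is a sub- or supersolution of the $p_1$-problem there, and the comparison estimate of Lemma \ref{CP}, read at $x=0$ and with $R\to\infty$, transfers the inequality to $\overline H(p_1)$ versus $\overline H(p_2)$.

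For the upper bound $\overline H\le 0$ on $[0,q]$, when $q\le q_L$ branch monotonicity on the increasing branch $[0,q_L]$ gives $\overline H(p)\le\overline H(q)=0$ at once. In general I would walk from $q$ back to $0$ along the chain of branches $[0,q_L],[q_L,p_L],[p_L,q_{L-1}],\dots$ that contains $q$: branch monotonicity makes $\overline H$ piecewise monotone on $[0,q]$, and the hypothesis $\overline H|_{(q,+\infty)}>0$ is used to exclude an interior value of $\overline H$ strictly above $0$, since propagating such a bump rightwards through the adjacent maximum-location of $\overline H$ would contradict either $\overline H(q)=0$ or the prescribed sign of $\overline H$ just past $q$. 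Iterating this (an induction on the number of branches separating $0$ from $q$) yields $\overline H\le 0$ on $[0,q]$.

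The hard part is the lower bound $\overline H\ge 0$ on $[0,q]$. By branch monotonicity it is enough to show $\overline H\ge 0$ at each minimum-location of $H$ lying in $[0,q]$, and the delicate case is the distinguished minimum $p=0$, where $\esssup_{(x,\omega)}H(0,x,\omega)=0$. I would argue by contradiction: if $\overline H(0)=-2\epsilon<0$, then for $\lambda$ small, $H(v_\lambda'(x,0,\omega),x,\omega)=-\lambda v_\lambda(x,0,\omega)<-\epsilon$ on $[-\frac R\lambda,\frac R\lambda]$, while the spatial average of $v_\lambda'(\cdot,0,\omega)$ over the box tends to $0$. Since $0$ is a local minimum of $H(\cdot,x,\omega)$ and $\esssup_{(x,\omega)}H(0,x,\omega)=0$, Birkhoff's theorem gives the set $\{x:H(0,x,\omega)>-\epsilon\}$ positive density, and on that set $v_\lambda'$ must lie off the central branch and onto a far branch; feeding this into a first-hitting/local-extremum argument of the type used in the random-case proof of Lemma \ref{Two points control} — with the no-cluster-point hypothesis keeping the crossing points isolated, and the hypothesis $\overline H|_{(q,+\infty)}>0$ providing the obstruction that makes the escaping gradient unsustainable — should produce the contradiction. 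Once $\overline H(0)\ge 0$, and likewise $\overline H\ge 0$ at the other minimum-locations in $[0,q]$ by the same scheme, branch monotonicity upgrades this to $\overline H\ge 0$ on all of $[0,q]$; combined with the upper bound this finishes the proof. I expect that last step — converting the ergodic density estimate into a rigorous contradiction through a careful first-hitting analysis across several branches — to be the main obstacle.
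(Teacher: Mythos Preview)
Your route is genuinely different from the paper's, and the gap you flag at the end is real and not easily patched within your framework. The paper does \emph{not} attempt to argue branch-by-branch on $H$ at all. Instead it builds a comparison Hamiltonian $\widehat H\ge H$ by keeping $H$ on an interval $[\widetilde q_{k_-},q_{k_+}]$ (chosen so that $\essinf_{(x,\omega)}H(q_{k_+},\cdot,\cdot)>0$ and similarly on the left) and extending linearly with slope $\mathcal L$ outside. This $\widehat H$ falls into the large-oscillation class of Section~\ref{Proof of Large Oscillation}, so its effective Hamiltonian $\overline{\widehat H}$ is level-set convex, nonnegative, and satisfies $\overline{\widehat H}(0)=0$. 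The only substantive step is a Claim that $q_{k_-}<q<q_{k_+}$, proved by contradiction using $\mathbf E[m(x,\omega)>0]>0$, Lemma~\ref{Two points control}, and the ergodic theorem. Once $q$ is trapped, gradient confinement and Lemma~\ref{CP} give $\overline{\widehat H}(q)=\overline H(q)=0$; level-set convexity then forces $\overline{\widehat H}|_{[0,q]}\equiv 0$, and the sandwich $0\le \overline H\le \overline{\widehat H}=0$ finishes. Both the upper and lower bounds drop out simultaneously from the auxiliary object; there is no separate ``hard part''.

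Your proposal has two concrete problems. First, the branch-monotonicity lemma is circular as stated: to confine $p+v_\lambda'(\cdot,p,\omega)$ to a branch $J$ via Lemma~\ref{Two points control} or~\ref{Three points control} you need inequalities of the form $\overline H(p)<\underline P$ or $\overline H(p)>\overline P$ at the flanking extrema, but those are precisely the values you are trying to control by the induction; without an independent anchor the induction has no base case. (In fact, monotonicity of $\overline H$ on each monotone branch of $H$ is not a general phenomenon for nonconvex Hamiltonians, so this would need real work even to formulate correctly.) Second, the lower bound $\overline H\ge 0$ on $[0,q]$---which you correctly identify as the crux---is exactly what the paper obtains for free by passing to $\widehat H$ and invoking Section~\ref{Proof of Large Oscillation}; your first-hitting sketch at $p=0$ never actually uses the hypothesis $\overline H|_{(q,+\infty)}>0$ in a way that produces a contradiction, and I do not see how to close it without essentially reconstructing the large-oscillation theory. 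The comparison-Hamiltonian idea is the missing ingredient.
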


\begin{proof}
(1)Recall the notation (\ref{m}) and $H(p,x,\omega)$ is constrained with index $(\widetilde{L},L)$, we have
\begin{eqnarray}\label{ass for squeeze}
\mathbf{E}[m(x,\omega)>0]>0
\end{eqnarray}

Denote:
\begin{eqnarray*}
\underline{M_{i}}:=\essinf\limits\limits_{(x,\omega)\in\RR\times\Omega} M_{i}(x,\omega) &&
\underline{M^{+}}:=\max\limits_{1\leq i\leq L}\underline{M_{i}}\\
\underline{\widetilde{M_{i}}}:=\essinf\limits\limits_{(x,\omega)\in\RR\times\Omega} \widetilde{M}_{i}(x,\omega) &&
\underline{M^{-}}:=\max\limits_{1\leq i\leq \widetilde{L}}\underline{\widetilde{M_{i}}}
\end{eqnarray*}

\textbf{Case 1:} $\min\left\lbrace \underline{M^{+}},\underline{M^{-}} \right\rbrace>0$. Denote
\begin{eqnarray*}
k_{+}=\max\lbrace 1\leq i\leq L|\underline{M_{i}}>0  \rbrace &&
k_{-}=\max\lbrace 1\leq i\leq \widetilde{L}|\underline{\widetilde{M_{i}}}>0  \rbrace
\end{eqnarray*}
\begin{eqnarray*}
\widehat{H}(p,x,\omega):=\begin{cases}
\mathcal{L}|p-\widetilde{q}_{k_{-}}|+H(\widetilde{q}_{k_{-}},x,\omega) & p\in (-\infty, \widetilde{q}_{k_{-}})\\
H(p,x,\omega) & p\in [\widetilde{q}_{k_{-}},q_{k_{+}}]\\
\mathcal{L}|p-q_{k_{+}}|+H(q_{k_{+}},x,\omega) & p\in(q_{k_{+}},+\infty)
\end{cases}
\end{eqnarray*}

By section \ref{Proof of Large Oscillation}, $\widehat{H}(p,x,\omega)$ has a level-set convex effective Hamiltonian $\overline{\widehat{H}}(p)\geq 0$ with $\overline{\widehat{H}}(0)=0$. For any $\lambda>0$, let $v_{\lambda}(x,q,\omega)$  and $\widehat{v}_{\lambda}(x,q,\omega)$ be solutions of the following equations:
\begin{eqnarray*}
\lambda v_{\lambda}+H(q+v_{\lambda}^{\prime},x,\omega)=0 &&  \lambda \widehat{v}_{\lambda}+\widehat{H}(q+\widehat{v}_{\lambda}^{\prime},x,\omega)=0
\end{eqnarray*}

\textbf{Claim:} $q_{k_{-}}<q<q_{k_{+}}.$

Proof of the Claim: Suppose it is not true.

(I)If $q=q_{k_{+}}$, then $0=\overline{H}(q)=\overline{H}(q_{k_{+}})\geq \underline{M_{k_{+}}}>0$, this is a contradiction.

(II)If $q>q_{k_{+}}$. The arguments are divided into the following (II-1), (II-2) and (II-3).

(II-1)By Lemma \ref{Two points control}, there is $\Omega_{1}\subset\Omega$, $\mathbf{P}[\Omega_{1}]=1$ such that if $\omega\in\Omega_{1}$, then for any $R>0$, there is $\lambda_{1}=\lambda_{1}(R,q,\omega)>0$,
\begin{eqnarray*}
0<\lambda<\lambda_{1}\implies q+v_{\lambda}^{\prime}(x,q,\omega)\geq q_{k_{+}} & x\in\left[-\frac{R}{\lambda},\frac{R}{\lambda}\right]
\end{eqnarray*}

(II-2)By (\ref{ass for squeeze}), there are $\delta>0$ and $\tau>0$ such that $\mathbf{E}\left[m(x,\omega)>\delta\right]=\tau$. By ergodic theorem, there is $\Omega_{2}\subset\Omega$, $\mathbf{P}[\Omega_{2}]=1$. For each $\omega\in\Omega_{2}$ and $R>0$,
\begin{eqnarray*}
\lim_{\lambda\rightarrow 0}\frac{2\lambda}{R}\int_{-\frac{R}{\lambda}}^{\frac{R}{\lambda}}\CCChi_{\lbrace m(\cdot,\omega)>\delta\rbrace}(x)dx=\mathbf{E}\left[m(x,\omega)>\delta\right]=\tau
\end{eqnarray*}

So there is some $\lambda_{2}(R,q,\omega)>0$, such that
\begin{eqnarray*}
0<\lambda<\lambda_{2}(R,q,\omega) \implies\frac{2\lambda}{R}\int_{-\frac{R}{\lambda}}^{\frac{R}{\lambda}}\CCChi_{\lbrace m(\cdot,\omega)>\delta\rbrace}(x)dx>\frac{\tau}{2}
\end{eqnarray*}

(II-3)Since $\overline{H}(q)=0$, there is $\Omega_{3}\subset\Omega$, $\mathbf{P}[\Omega_{3}]=1$. For each $\omega\in\Omega_{3}$ and $R>0$, there is $\lambda_{3}=\lambda_{3}(R,q,\omega)>0$,
\begin{eqnarray*}
0<\lambda<\lambda_{3} \implies\left|\lambda v_{\lambda}(x,q,\omega)
\right|<\delta  & x\in\left[-\frac{R}{\lambda},\frac{R}{\lambda}\right]
\end{eqnarray*}

Let $\widetilde{\lambda}(R,q,\omega):=\min \lbrace \lambda_{1}(R,q,\omega), \lambda_{2}(R,q,\omega),\lambda_{3}(R,q,\omega) \rbrace>0$, $\widetilde{\Omega}:=\Omega_{1}\bigcap \Omega_{2}\bigcap\Omega_{3}$, then $\mathbf{P}[\widetilde{\Omega}]=1$, for each $\omega\in\widetilde{\Omega}$, when $\lambda<\widetilde{\lambda}(R,q,\omega)$, there is $x_{\lambda}\in\left[-\frac{R}{\lambda},\frac{R}{\lambda}\right]$, $m(x_{\lambda},\omega)>\delta$,
\begin{eqnarray*}
\delta<m(x_{\lambda},\omega)\leq H(q+v_{\lambda}^{\prime}(x_{\lambda},q,\omega),x_{\lambda},\omega)=-\lambda v_{\lambda}(x_{\lambda},q,\omega)<\delta
\end{eqnarray*}

This is a contradiction. (The second inequality is because  we have $q+v_{\lambda}^{\prime}(x_{\lambda},q,\omega)\geq q_{k_{+}}$). So, $q<q_{k_{+}}$. Similarly, we can prove $q_{k_{-}}<q$. This ends the proof of the \textbf{Claim}.

\vspace{0.5cm}

By Lemma \ref{Two points control}, there is $\widehat{\Omega}$, $\mathbf{P}[\widehat{\Omega}]=1$. For $\omega\in\widehat{\Omega}$ and any $R>0$, there is $\widehat{\lambda}(R,q,\omega)>0$,
\begin{eqnarray*}
0<\lambda<\widehat{\lambda}\implies q_{k_{+}}\leq q+v_{\lambda}^{\prime}(x,q,\omega)\leq q_{k_{+}} & x\in\left[-\frac{R}{\lambda},\frac{R}{\lambda}\right]
\end{eqnarray*}

So
\begin{eqnarray*}
\lambda v_{\lambda}(x,q,\omega)+\widehat{H}(q+v_{\lambda}^{\prime}(x,q,\omega),x,\omega)=0 & x\in\left[-\frac{R}{\lambda},\frac{R}{\lambda}\right]
\end{eqnarray*}

By Lemma \ref{CP}, there is some constant $C>0$, such that
\begin{eqnarray*}
|\lambda v_{\lambda}(0,q,\omega)-\lambda \widehat{v}_{\lambda}(0,q,\omega)|\leq\frac{C}{R}
\end{eqnarray*}

$R>0$ can be chosen arbitrarily large, then
\begin{equation*}
\overline{\widehat{H}}(q)=\lim_{\lambda\rightarrow 0}-\lambda \widehat{v}_{\lambda}(0,q,\omega)=\lim_{\lambda\rightarrow 0}-\lambda v_{\lambda}(0,q,\omega)=\overline{H}(q)=0
\end{equation*}

By level-set convexity of $\overline{\widehat{H}}(p)$ and $\overline{\widehat{H}}(0)=0$,  $\overline{\widehat{H}}|_{[0,q]}\equiv 0$. Since $\widehat{H}(p,x,\omega)\geq H(p,x,\omega)$, $\overline{\widehat{H}}(p)\geq \overline{H}(p)$. On the other hand, $\overline{H}(p)\geq 0$. So $\overline{H}|_{[0,q]}\equiv 0$.

\textbf{Case 2:} $\min\left\lbrace \underline{M^{+}},\underline{M^{-}} \right\rbrace\leq 0<\max\left\lbrace \underline{M^{+}},\underline{M^{-}} \right\rbrace$.

Construct $\widehat{H}(p,x,\omega)$ by modifying one side and similar arguments thereafter.

\textbf{Case 3:} $\max\left\lbrace \underline{M^{+}},\underline{M^{-}} \right\rbrace\leq 0$.

By section \ref{Proof of Large Oscillation}, $\overline{H}(p)$ is level-set convex. Define $\widehat{H}(p,x,\omega):=H(p,x,\omega)$ and apply the result of \textbf{Case 1}. This ends the proof of (1).

\vspace{5mm}
The proof for (2) is similar.

\end{proof}

\section{Reduction by constrained Hamiltonian with index $(\widetilde{L},0)$ and $(0,L)$}\label{reduction to one side constrained Hamiltonians}

Let $H(p,x,\omega)$ be a constrained Hamiltonian that satisfies \textbf{(A1)-(A3)}. Define

\begin{eqnarray*}
H^{+}(p,x,\omega):=\begin{cases}
H(p,x,\omega) & p\geq 0\\
\mathcal{L}|p|+H(0,x,\omega) & p< 0
\end{cases}&& H^{-}(p,x,\omega):=\begin{cases}
\mathcal{L}|p|+H(0,x,\omega) & p\geq 0\\
H(p,x,\omega) & p< 0
\end{cases}
\end{eqnarray*}

\begin{lemma}\label{gluing at minimum point}
	If both $H^{+}(p,x,\omega)$ and $H^{-}(p,x,\omega)$ are regularly homogenizable for all $p\in\RR$, then $H(p,x,\omega)$ is also regularly homogenizable for all $p\in\RR$ and
	\begin{eqnarray*}
	\overline{H}(p)=\begin{cases}
	\overline{H^{+}}(p) & p\geq 0\\
	\overline{H^{-}}(p) & p<0
	\end{cases}
	\end{eqnarray*}
\end{lemma}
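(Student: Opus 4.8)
The statement is a gluing result at the minimum point $p=0$ of a constrained Hamiltonian. The natural strategy is to establish the two inequalities $\overline{H}(p) \le \overline{H^{+}}(p)$ for $p\ge 0$ (and symmetrically $\overline{H}(p)\le\overline{H^{-}}(p)$ for $p<0$), together with the reverse bounds. The easy half comes from monotonicity of homogenization in the Hamiltonian: since $H^{+}(p,x,\omega)\ge H(p,x,\omega)$ and $H^{-}(p,x,\omega)\ge H(p,x,\omega)$ pointwise (because $\mathcal{L}|p|+H(0,x,\omega)\ge H(p,x,\omega)$ on the region where the branch was replaced, using that $\mathcal L$ is the Lipschitz constant in $p$ and that $p=0$ is the global minimum in $p$), the approximate correctors satisfy a comparison, hence $\overline{H}(p)\le\overline{H^{+}}(p)$ and $\overline{H}(p)\le\overline{H^{-}}(p)$ for every $p$. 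First I would record this, and also note that $H(p,x,\omega)$ is automatically regularly homogenizable at every $p$ where we can prove the matching lower bound (and combine with Corollary \ref{closedness of regularly homogenizable points} to handle $p=0$ by continuity if needed).

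\textbf{The main step} is the reverse inequality: for $p\ge 0$ one must show $\overline{H}(p)\ge\overline{H^{+}}(p)$, i.e. the extra ``well'' on the negative side that $H$ has but $H^{+}$ does not cannot lower the effective Hamiltonian. Fix $p\ge 0$ and let $v_\lambda$ solve $\lambda v_\lambda + H(p+v_\lambda',x,\omega)=0$. The key claim is that for a.e. $\omega$, for any $R>0$ and all small $\lambda$, the slope stays non-negative: $p+v_\lambda'(x,p,\omega)\ge 0$ on $[-R/\lambda,R/\lambda]$. Granting this, on that range $H(p+v_\lambda',x,\omega)=H^{+}(p+v_\lambda',x,\omega)$, so $v_\lambda$ solves the $H^{+}$-equation there; then Lemma \ref{CP} gives $|\lambda v_\lambda(0)-\lambda v_\lambda^{+}(0)|\le C/R$ with $v_\lambda^{+}$ the $H^{+}$-corrector, and letting $\lambda\to0$ then $R\to\infty$ yields $\overline{H}(p)=\overline{H^{+}}(p)$. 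This is exactly the mechanism used in the proof of Lemma \ref{squeeze lemma}.

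\textbf{Proving the slope claim} is where the work is, and I expect it to be the main obstacle. There are two cases according to the value of $\overline{H}(p)$. If $\overline{H}(p)>\esssup_{(x,\omega)}H(0,x,\omega)=0$, then since $0\in(\,\widetilde q_L,\text{something}\,)$ sits at a local min, one can apply part (4) of Lemma \ref{Two points control} with $P=0$ (noting $\overline{H}(p)>\overline P$ and $p\ge 0= P$... here one must be slightly careful when $p=0$, handled by Corollary \ref{closedness of regularly homogenizable points}) to conclude $p+v_\lambda'\ge 0$. If instead $\overline{H}(p)=0$ (it cannot be negative since $\overline{H}\ge 0$ for a constrained Hamiltonian with index $(\widetilde L,L)$, as $H\ge 0$), then I would argue directly: on any interval where $p+v_\lambda'<0$, the function $\Psi(x)=px+v_\lambda(x)$ is strictly decreasing; if the slope dipped below $0$ somewhere and the average slope over a long window is $p\ge 0$, then $\Psi$ would have an interior local maximum at a point $z$ with $p+v_\lambda'(z)<0$, forcing $\lambda v_\lambda(z)+H(p+v_\lambda'(z),z,\omega)\le 0$; but $H$ restricted to $(-\infty,0)$ is $\ge \essinf H(0,\cdot,\cdot)$... this last inequality is too weak in general, so instead one uses that a local maximum of $\Psi$ occurs where the slope is $0^-$, i.e. at a point arbitrarily close to slope $p+v_\lambda'=0$ value, combined with $\lambda v_\lambda\to -\overline{H}(p)=0$ uniformly to get a contradiction with coercivity/positivity on the wells exactly as in the \textbf{Claim} inside Lemma \ref{squeeze lemma}. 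The delicate point is the boundary case $p=0$ and keeping the slope bound uniform in $x$ over the whole window $[-R/\lambda,R/\lambda]$; I would reduce $p=0$ to $p>0$ by Corollary \ref{closedness of regularly homogenizable points} and continuity of $\overline{H^\pm}$, and obtain the uniform-in-$x$ slope bound by the same local-maximum argument applied at any putative excursion below $0$.
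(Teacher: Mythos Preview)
Your overall architecture --- monotone comparison for one inequality, a slope bound plus Lemma~\ref{CP} for the other --- matches the paper. But the main step contains a genuine circularity.

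You fix $p\ge 0$, let $v_\lambda$ be the corrector for $H$, and then invoke Lemma~\ref{Two points control} (part (4)) to force $p+v_\lambda'\ge 0$ on $[-R/\lambda,R/\lambda]$. But Lemma~\ref{Two points control} has as a \emph{hypothesis} that the Hamiltonian is regularly homogenizable at the point in question: its proof uses the uniform convergence $\sup_{|x|\le (R_1+R_2)/\lambda}|\lambda v_\lambda+\overline H(p_0)|\to 0$ to derive the contradiction. For $H$ this uniform limit is exactly what you are trying to establish. Your case split ``if $\overline H(p)>0$ \dots\ if $\overline H(p)=0$'' already presupposes that $\overline H(p)$ exists, and your direct local-maximum argument in the second case needs the same uniform convergence of $\lambda v_\lambda$.

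The paper fixes this by reversing which corrector the slope control is applied to. It works with $v_{+,\lambda}$, the corrector for $H^{+}$, which \emph{is} regularly homogenizable by hypothesis. When $p>0$ and $\overline{H^{+}}(p)>0$, Lemma~\ref{Two points control} applied to $H^{+}$ with $P=0$ gives $p+v_{+,\lambda}'\ge 0$ on $[-R/\lambda,R/\lambda]$. On that set $H^{+}(p+v_{+,\lambda}',x,\omega)=H(p+v_{+,\lambda}',x,\omega)$, so $v_{+,\lambda}$ solves the $H$-equation there; Lemma~\ref{CP} then compares it to $v_\lambda$ and yields $\lim_{\lambda\to 0}(-\lambda v_\lambda(0))=\overline{H^{+}}(p)$. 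The remaining case $\overline{H^{+}}(p)=0$ (which includes $p=0$, since $\overline{H^{+}}(0)=0$) is handled with no slope control at all: comparison from $H^{+}\ge H$ gives $\limsup(-\lambda v_\lambda)\le 0$, and $\liminf(-\lambda v_\lambda)\ge 0$ from the index-$(\widetilde L,L)$ structure; the two pinch $\overline H(p)=0$. No recourse to Corollary~\ref{closedness of regularly homogenizable points} is needed.

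So the repair is a one-line swap: run the slope argument on the corrector of the \emph{known} homogenizable Hamiltonian $H^{+}$, then transfer to $H$ via Lemma~\ref{CP}, rather than trying to control the slope of $v_\lambda$ directly.
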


\begin{proof}
	Fix $p\geq 0$, $\omega\in\Omega$ and $\lambda>0$, let $v_{\lambda}(x,p,\omega)$ and $v_{+,\lambda}(x,p,\omega)$ be solutions of the equations
	\begin{eqnarray*}
		\lambda v_{\lambda}+H(p+v_{\lambda}^{\prime},x,\omega)= 0 &&
		\lambda v_{+,\lambda}+H^{+}(p+v_{+,\lambda}^{\prime},x,\omega)= 0
	\end{eqnarray*}

	By $H^{+}(p,x,\omega)\geq H(p,x,\omega)$, $\esssup\limits\limits_{(x,\omega)}H(p,x,\omega)\geq 0$ and comparison principle, we have
	\begin{eqnarray*}
		\liminf_{\lambda\rightarrow 0}-\lambda v_{+,\lambda}(0,p,\omega)\geq \liminf_{\lambda\rightarrow 0}-\lambda v_{\lambda}(0,p,\omega)\geq 0
	\end{eqnarray*}

	Thus, if $\overline{H^{+}}(p)=0$, then $\overline{H}(p)=0$. Since $\overline{H^{+}}(0)=0$, we can only consider the case: $p>0$ and $\overline{H^{+}}(p)>0$. By Lemma \ref{Two points control}, for a.e. $\omega\in\Omega$, any $R>0$, there exists $\lambda_{0}=\lambda_{0}(R,p,\omega)>0$, such that
	\begin{eqnarray*}
		0<\lambda<\lambda_{0}\implies p+v_{+,\lambda}(x,\pagebreak,\omega)\geq 0 & x\in \left[-\frac{R}{\lambda},\frac{R}{\lambda}\right]
	\end{eqnarray*}

	So $\lambda v_{+,\lambda}+H(p+v_{+,\lambda}^{\prime},x,\omega)= 0, \forall x\in\left[-\frac{R}{\lambda},\frac{R}{\lambda}\right]$. By Lemma \ref{CP}, there is a constant $C>0$ and
	\begin{eqnarray*}
		\left|\lambda v_{+,\lambda}(0,p,\omega)-\lambda v_{\lambda}(0,p,\omega)\right|\leq \frac{C}{R}
	\end{eqnarray*}
	
	Since $R$ can be chosen arbitrarily large,
	\begin{eqnarray*}
		\lim_{\lambda\rightarrow 0}-\lambda v_{\lambda}(0,p,\omega)=\lim_{\lambda\rightarrow 0}-\lambda v_{+,\lambda}(0,p,\omega)=\overline{H^{+}}(p)
	\end{eqnarray*}

	So $\overline{H}(p)=\overline{H^{+}}(p), p\geq 0$. Similarly, we can prove $\overline{H}(p)=\overline{H^{-}}(p), p\leq 0$.

\end{proof}

\begin{remark}
	By Lemma \ref{gluing at minimum point}, to prove the homogenization of a Hamiltonian that satisfies \textbf{(A1)-(A3)} and is constrained with index $(\widetilde{L},L)$, it suffices to study those Hamiltonians that have index $(0,L)$ or $(\widetilde{L},0)$. Without loss of generality, in the following sections, we only consider the Hamiltonian under assumptions \textbf{(A1)-(A3)} and be constrained with index $(0,L)$.
\end{remark}

\section{Gluing Lemmas: reduction from small oscillation to large oscillation}\label{the section of gluing lemmas}

In this section, $H(p,x,\omega)$ satisfies \textbf{(A1)-(A3)} and is constrained with index $(0,L)$. Denote
\begin{eqnarray*}
\underline{M}:=\essinf\limits\limits_{(x,\omega)\in\RR\times\Omega}M(x,\omega) &&
\overline{m}:=\esssup\limits\limits_{(x,\omega)\in\RR\times\Omega}m(x,\omega)
\end{eqnarray*}

 There are $1\leq\underline{k},\overline{k}\leq L$, such that
 \begin{eqnarray*}
 	\underline{M}:=\essinf\limits\limits_{(x,\omega)\in\RR\times\Omega}M_{\underline{k}}(x,\omega) &&
 	\overline{m}:=\esssup\limits\limits_{(x,\omega)\in\RR\times\Omega}m_{\overline{k}}(x,\omega)
 \end{eqnarray*}

\begin{definition}[Oscillation]\label{oscillation}
	Let $H(p,x,\omega)$ be constrained(\ref{constrained Hamiltonian}) and satisfies \textbf{(A1)-(A3)}.
	
	(1)$H(p,x,\omega)$ has small oscillation if $\underline{M}\geq\overline{m}$.
	
	(2)$H(p,x,\omega)$ has large oscillation if $\underline{M}<\overline{m}$.
\end{definition}

Throughout this section, we assume small oscillation and denote
\begin{eqnarray*}
P:= p_{\overline{k}} &&
Q:= q_{\underline{k}}
\end{eqnarray*}

\subsection{Left Steep Side:} $\underline{M}>\overline{m}$ and $P<Q$. Define
\begin{eqnarray*}
H_{1}(p,x,\omega)&:=&\begin{cases}
H(p,x,\omega) & p\leq Q\\
\mathcal{L}|p-Q|+H(Q,x,\omega) & p>Q
\end{cases}\\
H_{3}(p,x,\omega)&:=&\begin{cases}
H(p,x,\omega) & p\geq q_{\bar{k}}\\
\mathcal{L}|p-q_{\bar{k}}|+H(q_{\bar{k}},x,\omega) & p<q_{\bar{k}}
\end{cases}\\
H_{2}(p,x,\omega)&:=&\max\lbrace H_{1}(p,x,\omega), H_{3}(p,x,\omega) \rbrace
\end{eqnarray*}

\begin{figure}[h]
\centering
\includegraphics[width=0.4\linewidth]{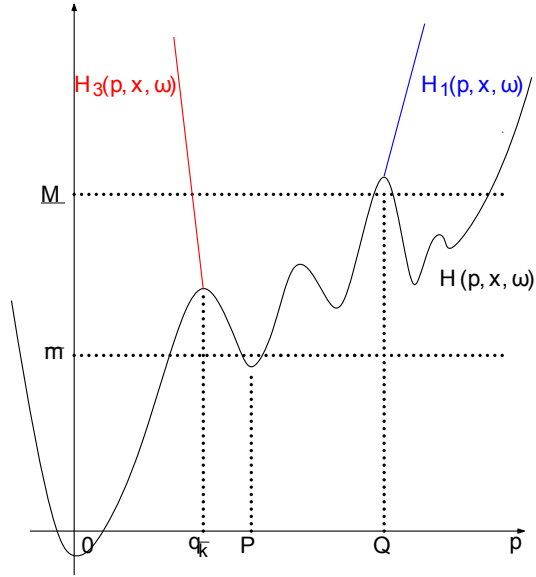}
\caption{Left Steep Side}
\label{fig:leftsteepside}
\end{figure}

\begin{lemma}\label{left steep side}
Assume $H_{i}(p,x,\omega), i=1,2,3$ are all regularly homogenizable for any $p\in\RR$. Then $H(p,x,\omega)$ is also regularly homogenizable for any $p\in\RR$ and 
\begin{eqnarray*}
\overline{H}(p)=\min\left\lbrace \overline{H_{1}}(p), \overline{H_{3}}(p) \right\rbrace
\end{eqnarray*}
\end{lemma}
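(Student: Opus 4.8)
The strategy is the standard gluing argument adapted to the ``left steep side'' configuration: establish both inequalities $\overline{H}(p)\le\min\{\overline{H_1}(p),\overline{H_3}(p)\}$ and $\overline{H}(p)\ge\min\{\overline{H_1}(p),\overline{H_3}(p)\}$ for every $p\in\RR$, by comparing the solution $v_\lambda$ of the cell problem for $H$ with the solutions $v_{1,\lambda}, v_{2,\lambda}, v_{3,\lambda}$ of the cell problems for $H_1,H_2,H_3$ on large intervals $[-R/\lambda,R/\lambda]$, and then letting $\lambda\to0$ followed by $R\to\infty$. Since $H_i\ge H$ pointwise for $i=1,3$ (each $H_i$ is obtained by replacing a monotone branch of $H$ by a steeper linear branch lying above it), the comparison principle immediately gives $-\lambda v_\lambda(0,p,\omega)\le -\lambda v_{i,\lambda}(0,p,\omega)$, hence $\overline{H}(p)\le\overline{H_i}(p)$ for $i=1,3$, yielding the easy direction $\overline{H}(p)\le\min\{\overline{H_1}(p),\overline{H_3}(p)\}$. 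Note also $H_2=\max\{H_1,H_3\}\ge H$, so $\overline{H_2}\ge\overline{H}$ as well; $H_2$ enters because it controls the ``middle'' regime where the gradient of $v_\lambda$ might escape both the region where $H=H_1$ and the region where $H=H_3$.

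For the reverse inequality, fix $p$ and split into cases according to the position of $p$ and the value $\overline{H}(p)$ relative to the local extremes. The key geometric fact is that on the interval $(Q,q_{\bar k})$ we have $H(p,x,\omega)=H_2(p,x,\omega)$ fails in general, but $H$ agrees with $H_1$ on $(-\infty,Q]$ and with $H_3$ on $[q_{\bar k},\infty)$, while $H_2=H$ off $(Q,q_{\bar k})$ only if the two linear pieces do not overlap there; the role of $P=p_{\bar k}$ and $Q=q_{\underline k}$ is that under the ``left steep side'' hypothesis $\underline M>\overline m$ and $P<Q$, the essential ranges separate appropriately. I would use Lemma \ref{Two points control} and Lemma \ref{Three points control} to show that for a.e.\ $\omega$, for any $R>0$ and all small $\lambda$, the gradient $p+v_\lambda'(x)$ on $[-R/\lambda,R/\lambda]$ is trapped in a region where $H$ coincides with one of $H_1,H_2,H_3$ (depending on the sign of $\overline{H}(p)$ and whether $-\lambda v_\lambda$ lies above $\underline M$, between $\overline m$ and $\underline M$, or below $\overline m$); on that region the relevant $v_{i,\lambda}$ solves the same equation, so Lemma \ref{CP} gives $|\lambda v_\lambda(0,p,\omega)-\lambda v_{i,\lambda}(0,p,\omega)|\le C/R$, and letting $R\to\infty$ forces $\overline{H}(p)=\overline{H_i}(p)$ for that particular $i$. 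One then checks case by case that this common value equals $\min\{\overline{H_1}(p),\overline{H_3}(p)\}$, using the squeeze lemma (Lemma \ref{squeeze lemma}) to handle the flat region where $\overline{H}$ vanishes and monotonicity/ordering of $\overline{H_1},\overline{H_3}$ on the relevant side.

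The main obstacle I anticipate is the careful case analysis needed to guarantee the gradient-trapping: one must verify that whenever $v_\lambda'$ is close to leaving the ``good'' region where $H=H_i$, the hypotheses of Lemma \ref{Two points control} or \ref{Three points control} are genuinely satisfied (i.e.\ the essential $\inf$/$\sup$ of $H$ at the relevant threshold points are strictly separated from $\overline{H}(p)$, which is exactly what $\underline M>\overline m$ and $P<Q$ buy us), and that the three possible regimes for $-\lambda v_\lambda$ are exhaustive. A secondary subtlety is that the threshold between regimes is itself determined by the unknown $\overline{H}(p)$, so the argument must first pin down which interval $\overline{H}(p)$ falls into (using the easy inequality and the known values $\overline{H_1}(P),\overline{H_3}(q_{\bar k})$ etc.) before invoking the trapping lemmas; intersecting the finitely many full-measure sets $\widetilde\Omega$ coming from each application of the auxiliary lemmas then produces the single full-measure set on which regular homogenizability of $H$ holds.
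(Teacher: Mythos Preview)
Your outline has the right overall shape (easy inequality by comparison, reverse inequality via gradient trapping plus Lemma~\ref{CP}), but there is a circularity in the central step that would make the argument fail as written.

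You propose to apply Lemma~\ref{Two points control} and Lemma~\ref{Three points control} to trap the gradient $p+v_\lambda'$ of the solution for $H$, and you organize the case split according to where $\overline{H}(p)$ sits relative to $\overline m$ and $\underline M$. But both of these moves presuppose exactly what you are trying to prove: Lemmas~\ref{Two points control} and~\ref{Three points control} require the Hamiltonian in question to be \emph{regularly homogenizable at $p_0$}, and the quantity $\overline{H}(p)$ is not yet known to exist. The paper avoids this by running the argument in the opposite direction: it applies the trapping lemmas to the gradients $p+v_{i,\lambda}'$ of the solutions for $H_1,H_2,H_3$ (which \emph{are} assumed regularly homogenizable), deduces that each $v_{i,\lambda}$ solves the equation for $H$ on $[-R/\lambda,R/\lambda]$, and then invokes Lemma~\ref{CP} to compare $v_{i,\lambda}$ with $v_\lambda$. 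The case analysis is accordingly keyed to the known values $\overline{H_1}(p),\overline{H_2}(p),\overline{H_3}(p)$ (via the set $A=\{p\in(P,Q):\overline m<\overline{H_2}(p)<\underline M\}$ and conditions like $\overline{H_1}(p)<\underline M$ or $\overline{H_3}(p)>\overline m$), not to the unknown $\overline{H}(p)$.

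A second, minor point: the squeeze lemma (Lemma~\ref{squeeze lemma}) is not needed here; it enters only in the Right Steep Side case. In the Left Steep Side the exhaustiveness of the cases follows directly from the relation $\overline{H_2}\ge\max\{\overline{H_1},\overline{H_3}\}$ together with the auxiliary claim (3.4) in the paper that $\overline{H_3}(p)<\underline M$ forces $\overline{H_2}(p)=\overline{H_3}(p)$.
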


\begin{proof}[Proof of the periodic case]
	For any $p\in\RR$, we have the cell problem
	\begin{eqnarray}\label{cell problem in periodic case for left steep side}
	H(p+v^{\prime}(x),x)=\overline{H}(p)
	\end{eqnarray}
	
	Proof by contradiction, if there are $x_{1},x_{2}\in[0,1]$, such that $p+v^{\prime}(x_{1})>Q$ and  $p+v^{\prime}(x_{2})<P$. Then $px+v(x)-Qx$ attains local maximum at some $y_{1}\in(x_{1},x_{2}+1)$ and $px+v(x)-Px$ attains local minimum at some $y_{2}\in(x_{2},x_{1}+1)$. Thus we get a contradiction from equalities:
	\begin{eqnarray*}
	\min_{x\in[0,1]} H(Q,x)=\underline{M}\leq H(Q,y_{1})\leq \overline{H}(p)\leq H(P,y_{2})\leq \overline{m}=\max_{x\in[0,1]}H(P,x)
	\end{eqnarray*}

	Thus, either $p+v^{\prime}(x)\leq Q$ for all $x\in[0,1]$ or $p+v^{\prime}(x)\geq P$ for all $x\in[0,1]$. By (\ref{cell problem in periodic case for left steep side}), either $\overline{H}(p)=\overline{H_{1}}(p)$ or $\overline{H}(p)=\overline{H_{3}}(p)$. On the other hand, since $H(p,x,\omega)=\min\lbrace H_{1}(p,x,\omega),H_{3}(p,x,\omega) \rbrace$, by comparison principle, we have $\overline{H}(p)\leq \lbrace \overline{H_{1}}(p),\overline{H_{3}}(p) \rbrace$. Eventually, we conclude
	\begin{eqnarray*}
	\overline{H}(p)= \lbrace \overline{H_{1}}(p),\overline{H_{3}}(p) \rbrace
	\end{eqnarray*} 
\end{proof}

\begin{proof}[Proof of the random case]
Decompose $\RR$ into three parts.

\textbf{(1)}If $p\in(-\infty,P)$, then $\overline{H}(p)=\overline{H_{1}}(p)$.

For each $\omega\in\Omega$ and $\lambda>0$, let $v_{\lambda}(x,p,\omega)$ and $v_{1,\lambda}(x,p,\omega)$ be solutions of the equations
\begin{eqnarray*}
\lambda v_{\lambda}+H(p+v_{\lambda}^{\prime},x,\omega)= 0 &&
\lambda v_{1,\lambda}+H_{1}(p+v_{1,\lambda}^{\prime},x,\omega)= 0
\end{eqnarray*}

By Lemma \ref{Three points control}, there is $\widetilde{\Omega}\subset\Omega$, $\mathbf{P}[\widetilde{\Omega}]=1$. For $\omega\in\widetilde{\Omega}$, any $R>0$, there is $\lambda_{0}=\lambda_{0}(R,p,\omega)>0$,
\begin{eqnarray*}
0<\lambda<\lambda_{0}\implies p+v_{1,\lambda}^{\prime}(x,p,\omega)\leq Q & x\in\left[-\frac{R}{\lambda},\frac{R}{\lambda}\right]
\end{eqnarray*}

Thus, for $0<\lambda<\lambda_{0}(R,p,\omega)$, 
\begin{eqnarray*}
\lambda v_{\lambda}+H(p+v_{\lambda}^{\prime},x,\omega)= 0 ,
\lambda v_{1,\lambda}+H(p+v_{1,\lambda}^{\prime},x,\omega)= 0 & x\in \left[-\frac{R}{\lambda},\frac{R}{\lambda}\right]
\end{eqnarray*}

By Lemma \ref{CP}, there is $C=C(p)$, such that
\begin{eqnarray*}
\left|\lambda v_{\lambda}(0,p,\omega)-\lambda v_{1,\lambda}(0,p,\omega)\right|\leq \frac{C}{R}
\end{eqnarray*}

Since $R$ can be chosen arbitrarily large
\begin{eqnarray*}
\lim_{\lambda\rightarrow 0^{+}}-\lambda v_{\lambda}(0,p,\omega)=\lim_{\lambda\rightarrow 0^{+}}-\lambda v_{1,\lambda}(0,p,\omega)=\overline{H_{1}}(p)
\end{eqnarray*}

Thus $H(p,x,\omega)$ is regularly homogenizable at $p$ and $\overline{H}(p)=\overline{H_{1}}(p)$, $p\in(-\infty,P)$.

\vspace{5mm}

\textbf{(2)}$p\in(Q,\infty)$, then $\overline{H}(p)=\overline{H_{3}}(p)$.

For each $\omega\in\Omega$ and $\lambda>0$, let $v_{\lambda}(x,p,\omega)$ and $v_{3,\lambda}(x,p,\omega)$ be solutions of the equations
\begin{eqnarray*}
\lambda v_{\lambda}+H(p+v_{\lambda}^{\prime},x,\omega)= 0 &
\lambda v_{3,\lambda}+H_{3}(p+v_{3,\lambda}^{\prime},x,\omega)= 0
\end{eqnarray*}

By Lemma \ref{Three points control}, there is $\widetilde{\Omega}\subset\Omega$, $\mathbf{P}[\widetilde{\Omega}]=1$. For $\omega\in\widetilde{\Omega}$, any $R>0$, there exists some $\lambda_{0}=\lambda_{0}(R,\omega,p)>0$,
\begin{eqnarray*}
0<\lambda<\lambda_{0}\implies p+v_{3,\lambda}^{\prime}(x,p,\omega)\geq P & x\in\left[-\frac{R}{\lambda},\frac{R}{\lambda}\right]
\end{eqnarray*}

Thus, for $0<\lambda<\lambda_{0}(R,p,\omega)$,
\begin{eqnarray*}
\lambda v_{\lambda}+H(p+v_{\lambda}^{\prime},x,\omega)= 0 ,
\lambda v_{3,\lambda}+H(p+v_{3,\lambda}^{\prime},x,\omega)= 0 & x\in\left[-\frac{R}{\lambda},\frac{R}{\lambda}\right]
\end{eqnarray*}

By Lemma \ref{CP}, there is $C=C(p)$, such that 
\begin{eqnarray*}
\left|\lambda v_{\lambda}(0,p,\omega)-\lambda v_{3,\lambda}(0,p,\omega)\right|\leq \frac{C}{R}
\end{eqnarray*}

Since $R$ can be chosen arbitrarily large
\begin{eqnarray*}
\lim_{\lambda\rightarrow 0^{+}}-\lambda v_{\lambda}(0,p,\omega)=\lim_{\lambda\rightarrow 0^{+}}-\lambda v_{3,\lambda}(0,p,\omega)=\overline{H_{3}}(p)
\end{eqnarray*}

Thus $H(p,x,\omega)$ is regularly homogenizable at $p$ and $\overline{H}(p)=\overline{H_{3}}(p)$, $p\in(Q,\infty)$.

\vspace{5mm}

\textbf{(3.1)}Denote:
\begin{eqnarray*}
A:=\big\lbrace p\in(P,Q)\big| \overline{m}<\overline{H_{2}}(p)<\underline{M} \big\rbrace
\end{eqnarray*}

Fix any $p\in A$, for any $\lambda>0$, let $v_{\lambda}(x,p,\omega), v_{2,\lambda}(x,p,\omega)$ be solutions of the equations:
\begin{eqnarray*}
\lambda v_{\lambda}+H(p+v_{\lambda}^{\prime},x,\omega)=0  && \lambda v_{2,\lambda}+H_{2}(p+v_{2,\lambda}^{\prime},x,\omega)=0
\end{eqnarray*}

By Lemma \ref{Two points control}, for each $\omega\in\widetilde{\Omega}$, any $R>0$, there is $\lambda_{0}=\lambda_{0}(R,p,\omega)>0$, s.t. 
\begin{eqnarray*}
0<\lambda<\lambda_{0}\implies P\leq p+v_{2,\lambda}^{\prime}(x,\omega)\leq Q & x\in\left[-\frac{R}{\lambda},\frac{R}{\lambda}\right]
\end{eqnarray*}

So
\begin{eqnarray*}
\lambda v_{\lambda}+H(p+v_{\lambda}^{\prime},x,\omega)=0 , \lambda v_{2,\lambda}+H(p+v_{2,\lambda}^{\prime},x,\omega)=0 &  x\in\left[-\frac{R}{\lambda},\frac{R}{\lambda}\right]
\end{eqnarray*}

By Lemma \ref{CP}, there is $C=C(p)$, such that
\begin{eqnarray*}
\left|\lambda v_{\lambda}(0,p,\omega)-\lambda v_{2,\lambda}(0,p,\omega)\right|\leq \frac{C}{R}
\end{eqnarray*}

Since $R$ can be chosen arbitrarily large
\begin{eqnarray*}
\lim_{\lambda\rightarrow 0^{+}}-\lambda v_{\lambda}(0,p,\omega)=\lim_{\lambda\rightarrow 0^{+}}-\lambda v_{2,\lambda}(0,p,\omega)=\overline{H_{2}}(p)
\end{eqnarray*}

Thus $H(p,x,\omega)$ is regularly homogenizable at $p$ and $\overline{H}(p)=\overline{H_{2}}(p)\geq \lbrace \overline{H_{1}}(p),\overline{H_{3}}(p) \rbrace$. On the other hand $\overline{H}(p)\leq\min\lbrace \overline{H_{1}}(p),\overline{H_{3}}(p) \rbrace$. So $\overline{H}(p)=\overline{H_{1}}(p)=\overline{H_{2}}(p)=\overline{H_{3}}(p), p\in A$.

\vspace{5mm}

\textbf{(3.2)} For $p\in\RR$, if $\overline{H_{1}}(p)<\underline{M}$, then $\overline{H}(p)=\overline{H_{1}}(p)$.

The assumption $\overline{H_{1}}(p)<\underline{M}$ implies $p<Q$. By Lemma \ref{Two points control}, for $\omega\in\widetilde{\Omega}$, any $R>0$, there is $\lambda_{0}=\lambda_{0}(R,p,\omega)>0$, such that
\begin{eqnarray*}
0<\lambda<\lambda_{0}\implies p+\lambda v_{1,\lambda}(x,p,\omega)\leq Q &  x\in\left[-\frac{R}{\lambda},\frac{R}{\lambda}\right]
\end{eqnarray*}

So
\begin{eqnarray*}
\lambda v_{\lambda}+H(p+v_{\lambda}^{\prime},x,\omega)=0 , \lambda v_{1,\lambda}+H(p+v_{1,\lambda}^{\prime},x,\omega)=0 & x\in\left[-\frac{R}{\lambda},\frac{R}{\lambda}\right]
\end{eqnarray*}

By Lemma \ref{CP}, there is $C=C(p)$, such that 
\begin{eqnarray*}
\left|\lambda v_{\lambda}(0,p,\omega)-\lambda v_{1,\lambda}(0,p,\omega)\right|\leq \frac{C}{R}
\end{eqnarray*}

Since $R$ can be chosen arbitrarily large
\begin{eqnarray*}
\lim_{\lambda\rightarrow 0^{+}}-\lambda v_{\lambda}(0,p,\omega)=\lim_{\lambda\rightarrow 0^{+}}-\lambda v_{1,\lambda}(0,p,\omega)=\overline{H_{1}}(p)
\end{eqnarray*}

Thus $H(p,x,\omega)$ is regularly homogenizable at $p$ and $\overline{H}(p)=\overline{H_{1}}(p)$.

\vspace{5mm}

\textbf{(3.3)} For $p>P$, if $\overline{H_{3}}(p)>\overline{m}$, then $\overline{H}(p)=\overline{H_{3}}(p)$.

By Lemma \ref{Two points control}, for each $\omega\in\widetilde{\Omega}$, any $R>0$, there is $\lambda_{0}=\lambda_{0}(R,p,\omega)>0$, such that
\begin{eqnarray*}
0<\lambda<\lambda_{0}\implies p+\lambda v_{3,\lambda}(x,p,\omega)\geq P & x\in\left[-\frac{R}{\lambda},\frac{R}{\lambda}\right]
\end{eqnarray*}

So
\begin{eqnarray*}
\lambda v_{\lambda}+H(p+v_{\lambda}^{\prime},x,\omega)=0 , \lambda v_{3,\lambda}+H(p+v_{3,\lambda}^{\prime},x,\omega)=0 & x\in\left[-\frac{R}{\lambda},\frac{R}{\lambda}\right]
\end{eqnarray*}

By Lemma \ref{CP}, there is $C=C(p)$, such that
\begin{equation*}
\left|\lambda v_{\lambda}(0,p,\omega)-\lambda v_{3,\lambda}(0,p,\omega)\right|\leq \frac{C}{R}
\end{equation*}

Since $R$ can be chosen arbitrarily large
\begin{equation*}
\lim_{\lambda\rightarrow 0^{+}}-\lambda v_{\lambda}(0,p,\omega)=\lim_{\lambda\rightarrow 0^{+}}-\lambda v_{3,\lambda}(0,p,\omega)=\overline{H_{3}}(p)
\end{equation*}

Thus $H(p,x,\omega)$ is regularly homogenizable at $p$ and $\overline{H}(p)=\overline{H_{3}}(p)$.\\

\vspace{5mm}

\textbf{(3.4)} For $p<Q$, if $\overline{H_{3}}(p)<\underline{M}$, then $\overline{H_{2}}(p)=\overline{H_{3}}(p)<\underline{M}$.

By Lemma \ref{Two points control}, for each $\omega\in\widetilde{\Omega}$, any $R>0$, there is $\lambda_{0}=\lambda_{0}(R,p,\omega)>0$, such that
\begin{eqnarray*}
0<\lambda<\lambda_{0}\implies p+ v_{3,\lambda}^{\prime}(x,p,\omega,)\leq Q & x\in\left[-\frac{R}{\lambda},\frac{R}{\lambda}\right]
\end{eqnarray*}

Here, for any $\lambda>0$, $v_{3,\lambda}$ is the solution of the equation
\begin{eqnarray*}
\lambda v_{3,\lambda}+H_{3}(p+v_{3,\lambda}^{\prime},x,\omega)=0
\end{eqnarray*}

However, by the above upper bound,
\begin{eqnarray*}
\lambda v_{3,\lambda}+H_{2}(p+v_{3,\lambda}^{\prime},x,\omega)=0 & x\in\left[-\frac{R}{\lambda},\frac{R}{\lambda}\right]
\end{eqnarray*}

Suppose for any $\lambda>0$, $v_{2,\lambda}(x,p,\omega)$ is the solution of the equation:
\begin{eqnarray*}
\lambda v_{2,\lambda}+H_{2}(p+v_{2,\lambda}^{\prime},x,\omega)=0
\end{eqnarray*}

By Lemma \ref{CP}, there is $C=C(p)$, such that
\begin{eqnarray*}
\left|\lambda v_{2,\lambda}(0,p,\omega)-\lambda v_{3,\lambda}(0,p,\omega)\right|\leq \frac{C}{R}
\end{eqnarray*}

Since $R$ can be chosen arbitrarily large
\begin{eqnarray*}
\overline{H_{2}}(p)=\lim_{\lambda\rightarrow 0^{+}}-\lambda v_{2,\lambda}(0,p,\omega)=\lim_{\lambda\rightarrow 0^{+}}-\lambda v_{3,\lambda}(0,p,\omega)=\overline{H_{3}}(p)
\end{eqnarray*}

Now, we discuss the homogenization of $\overline{H}(p)$ for $p\in[P,Q]\cap A^{c}$.

\textbf{(I)} If $p\in(P,Q)$ and $\overline{H_{2}}(p)\leq\overline{m}$, by the fact $\overline{m}<\underline{M}$ and
\begin{eqnarray*}
\max\lbrace \overline{H_{1}}(p), \overline{H_{3}}(p) \rbrace\leq \overline{H_{2}}(p)
\end{eqnarray*}

we have $\overline{H_{1}}(p)<\underline{M}$, by \textbf{(3.2)}, $\overline{H}(p)=\overline{H_{1}}(p)$.

\textbf{(II)} If $p\in(P,Q)$ and $\overline{H_{2}}(p)\geq\underline{M}$, then by \textbf{(3.4)}, $\overline{H_{3}}(p)\geq\underline{M}>\overline{m}$. By \textbf{(3.3)}, $\overline{H}(p)=\overline{H_{3}}(p)$.

\textbf{(III)} By Corollary \ref{closedness of regularly homogenizable points}, we have
\begin{eqnarray*}
\overline{H}(P)=\overline{H_{1}}(P)\text{ and } \overline{H}(Q)=\overline{H_{3}}(Q)
\end{eqnarray*}

In all, for any $p\in\RR$, either $\overline{H}(p)=\overline{H_{1}}(p)$ or $\overline{H}(p)=\overline{H_{3}}(p)$, so
\begin{eqnarray*}
\overline{H}(p)\geq\min\big\lbrace \overline{H_{1}}(p),\overline{H_{3}}(p) \big\rbrace
\end{eqnarray*}

On the other hand
\begin{eqnarray*}
\overline{H}(p)\leq\min\big\lbrace \overline{H_{1}}(p),\overline{H_{3}}(p) \big\rbrace
\end{eqnarray*}

So, we have proved:
\begin{eqnarray*}
\overline{H}(p)=\min\big\lbrace \overline{H_{1}}(p),\overline{H_{3}}(p) \big\rbrace
\end{eqnarray*}
\end{proof}

\subsection{Right Steep Side:}  $\underline{M}>\overline{m}$ and $Q\leq P$. Define
\begin{eqnarray*}
H_{1}(p,x,\omega):=\begin{cases}
H(p,x,\omega) & p\leq Q\\
\mathcal{L}|p-Q|+H(Q,x,\omega) & p> Q
\end{cases}
\end{eqnarray*}

\begin{eqnarray*}
H_{2}(p,x,\omega)=\begin{cases}
-\mathcal{L}|p|+H(0,x,\omega) & p<0\\
H(p,x,\omega) & 0\leq p\leq P\\
-\mathcal{L}|p-P|+H(P,x,\omega) & p>P
\end{cases}
\end{eqnarray*}

\begin{eqnarray*}
H_{3}(p,x,\omega):=\begin{cases}
H(p,x,\omega) & p\geq Q\\
\mathcal{L}|p-Q|+H(Q,x,\omega) & p< Q
\end{cases}
\end{eqnarray*}

\begin{figure}[h]
\centering
\includegraphics[width=0.4\linewidth]{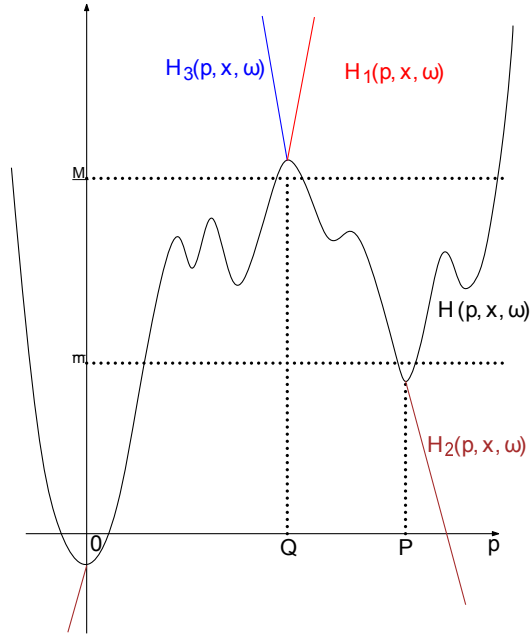}
\caption{Right Steep Side}
\label{fig:RightSteepSide}
\end{figure}

\begin{lemma}\label{right steep side}
Assume both $H_{1}(p,x,\omega)$ and $H_{3}(p,x,\omega)$ are regularly homogenizable for all $p\in\RR$, then $H(p,x,\omega)$ is also regularly homogenizable for all $p$ and
\begin{eqnarray*}
\overline{H}(p)=\begin{cases}
\overline{H_{1}}(p) & p\leq 0\\
\min\lbrace \overline{H_{1}}(p), \overline{H_{3}}(p), \underline{M} \rbrace & p\in(0,P)\\
\overline{H_{3}}(p) & p\geq P\\
\end{cases}
\end{eqnarray*}
\end{lemma}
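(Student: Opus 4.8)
The plan is to adapt the proof of Lemma~\ref{left steep side}: decompose $\RR$ into the three ranges $(-\infty,0]$, $(0,P)$ and $[P,+\infty)$, and on each of them identify $\overline{H}(p)$ by confining the gradient of a suitable corrector to a region on which $H$ agrees with $H_1$ or with $H_3$, and then quote the comparison estimate of Lemma~\ref{CP}. Observe first that $H_1\ge H$ and $H_3\ge H$, with $H_1=H$ on $\{p\le Q\}$ and $H_3=H$ on $\{p\ge Q\}$; since $H$ is $\mathcal{L}$-Lipschitz in $p$ this forces $H=\min\{H_1,H_3\}$, and the comparison principle already gives $\overline{H}(p)\le\min\{\overline{H_1}(p),\overline{H_3}(p)\}$ for all $p\in\RR$.

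\smallskip\noindent\emph{The two outer ranges.} For $p\le0$ I would show $\overline{H}(p)=\overline{H_1}(p)$ exactly as in part~(1) of the proof of Lemma~\ref{left steep side}: for a.e.\ $\omega$ and every $R>0$, applying Lemma~\ref{Two points control} and Lemma~\ref{Three points control} (inserting an auxiliary reference point on the steep branch of $H_1$ where needed), the gradient $p+v_{1,\lambda}'(x,p,\omega)$ of the $H_1$-corrector stays $\le Q$ on $[-R/\lambda,R/\lambda]$ once $\lambda$ is small; there $H_1=H$, so $v_{1,\lambda}$ solves the $H$-equation as well, and Lemma~\ref{CP} together with $R\to+\infty$ forces $\overline{H}(p)=\overline{H_1}(p)$. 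Symmetrically, confining $p+v_{3,\lambda}'\ge Q$ gives $\overline{H}(p)=\overline{H_3}(p)$ for $p\ge P$.

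\smallskip\noindent\emph{The middle range.} Let $p\in(0,P)$, which contains the barrier $q_{\underline{k}}=Q$ of height $\ge\underline{M}$. For the lower bound I would argue by dichotomy: if $\overline{H}(p)<\underline{M}$, then for small $\lambda$ one has $-\lambda v_\lambda(x)<\underline{M}\le M_{\underline{k}}(x,\omega)=H(Q,x,\omega)$ on $[-R/\lambda,R/\lambda]$, so --- reasoning as in Lemma~\ref{Two points control}, where the averaging argument lets one place a crossing point of the gradient so as to create an interior local maximum of $x\mapsto px+v_\lambda(x)-Qx$, which is impossible here --- the gradient $p+v_\lambda'$ stays on one side of $Q$ throughout; on $\{p+v_\lambda'\le Q\}$ this gives $\overline{H}(p)=\overline{H_1}(p)$ and on $\{p+v_\lambda'\ge Q\}$ it gives $\overline{H}(p)=\overline{H_3}(p)$ (both by Lemma~\ref{CP}), in either case $\ge\min\{\overline{H_1}(p),\overline{H_3}(p),\underline{M}\}$; and if $\overline{H}(p)\ge\underline{M}$ the inequality is trivial. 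For the reverse inequality the comparison bounds $\overline{H}(p)\le\overline{H_1}(p),\overline{H_3}(p)$ are in hand, so it remains to show $\overline{H}(p)\le\underline{M}$ on $(0,P)$; I would get this by constructing, for each $\epsilon>0$, a globally sublinear subsolution $w$ of $H(p+w',x,\omega)\le\underline{M}+\epsilon$: since $\mathbf{P}$-a.s.\ the set $\{x:M_{\underline{k}}(x,\omega)<\underline{M}+\epsilon\}$ has positive density, the gradient may be routed across $Q$ on that set at cost below $\underline{M}+\epsilon$, while off it the gradient is kept inside the neighbouring wells (whose bottoms are $\le\overline{m}\le\underline{M}$), the two kinds of excursions being balanced so that $w$ grows sublinearly --- this is the corrector construction of Section~\ref{Proof of Large Oscillation}. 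Such a subcorrector yields $\overline{H}(p)\le\underline{M}+\epsilon$, and letting $\epsilon\to0$ finishes it.

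\smallskip\noindent Regular homogenizability at $p=0$ and $p=P$, and continuity of the resulting $\overline{H}$, follow from Corollary~\ref{closedness of regularly homogenizable points} and the continuity of $\overline{H_1},\overline{H_3}$. The main obstacle is the plateau estimate $\overline{H}(p)\le\underline{M}$ on $(0,P)$: in the Left Steep Side the middle interval was controlled through the \emph{coercive} auxiliary Hamiltonian $H_2$, whereas here $H_2$ is only a one-bump Hamiltonian and cannot be fed into the $W^{1,\infty}$ corrector machinery, so the plateau must be produced by hand via a sublinear subcorrector that tunnels through the low part of the random barrier $M_{\underline{k}}$ --- the same mechanism as in the large-oscillation analysis, which is also why the lemma assumes homogenizability of $H_1$ and $H_3$ (not of $H_2$).
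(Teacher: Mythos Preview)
Your treatment of the outer ranges $p\le 0$ and $p\ge P$ matches the paper's Steps~1--2: one confines the gradient of the $H_1$- (resp.\ $H_3$-) corrector using Lemma~\ref{Three points control} with an auxiliary point on the steep branch, so that it solves the $H$-equation on $[-R/\lambda,R/\lambda]$, and Lemma~\ref{CP} finishes. No issue there.

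The substantive gap is in your lower bound on $(0,P)$. Your dichotomy begins ``if $\overline{H}(p)<\underline{M}$, then for small $\lambda$ one has $-\lambda v_\lambda(x)<\underline{M}$ on $[-R/\lambda,R/\lambda]$'' --- but this is precisely the statement that $H$ is regularly homogenizable at $p$ (with value $<\underline{M}$), which is part of what you must prove. Knowing only that $\liminf_{\lambda\to 0}(-\lambda v_\lambda(0))<\underline{M}$ along a subsequence does \emph{not} give uniform smallness on $[-R/\lambda,R/\lambda]$, so the Lemma~\ref{Two points control} mechanism (which needs that uniform bound to rule out a touching point at $Q$) cannot be invoked. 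The paper avoids this circularity: it first uses the \emph{assumed} effective Hamiltonians $\overline{H_1},\overline{H_3}$ to show (Claims~(3.1)--(3.2)) that whenever $\overline{H_1}(p)<\underline{M}$ or $\overline{H_3}(p)<\underline{M}$ one already has $\overline{H}(p)=\overline{H_1}(p)$ or $\overline{H_3}(p)$; this carves out a residual plateau $[q_1,q_2]$ with $\overline{H}(q_i)=\underline{M}$ at the endpoints (via Corollary~\ref{closedness of regularly homogenizable points}). On $[q_1,q_2]$ the paper then gets the lower bound not from the $H$-corrector but from the auxiliary bump $H_2$: it flips $H_2$ to the coercive $\widetilde{H}(p,x,\omega)=-H_2(Q-p,x,\omega)+\underline{M}$ and applies the Squeeze Lemma~\ref{squeeze lemma} to force $\overline{H_2}\equiv\underline{M}$ on $[q_1,q_2]$, whence $\liminf(-\lambda v_\lambda)\ge\underline{M}$ since $H\ge H_2$.

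Your upper bound idea (a sublinear subcorrector tunnelling through $\{M(\cdot,\omega)<\underline{M}+\epsilon\}$) is in spirit correct, and is different from the paper's route: the paper instead takes the concave envelope $\widehat{H}_2$ of $H_2$ on $[0,P]$, which is level-set concave and hence has an effective Hamiltonian $\overline{\widehat{H}_2}\le\underline{M}$ by the quasi-concave theory; confining the $\widehat{H}_2$-corrector to $[0,P]$ (again via Lemma~\ref{Two points control}, using $\overline{\widehat{H}_2}(p)\ge\underline{M}>\overline{m}$) turns it into an $\widehat{H}$-corrector, and $\widehat{H}\ge H$ gives $\limsup(-\lambda v_\lambda)\le\underline{M}$. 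Your direct construction can be made to work, but note that you must tunnel through the set where $M(x,\omega)$ (the maximum over \emph{all} local maxima), not just $M_{\underline{k}}(x,\omega)$, is small --- otherwise another barrier inside $[0,P]$ may block the route.
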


\begin{proof}[Proof of the periodic case for middle equality] For $p\in(0,P)$, we have the cell problem
	\begin{eqnarray*}
	H(p+v^{\prime}(x),x)=\overline{H}(p)
	\end{eqnarray*}

If $p+v^{\prime}(x)\leq Q,\forall x\in[0,1]$ or $p+v^{\prime}(x)\geq Q,\forall x\in[0,1]$, then $\overline{H}(p)=\overline{H_{1}}(p)$ or $\overline{H}(p)=\overline{H_{3}}(p)$.

Otherwise, by the assumption that $\underline{M}>\overline{m}$, we have $p+v^{\prime}(x)\in[0,P],\forall x\in[0,1]$.

There is some $x_{0}\in[0,1]$, such that $H(Q,x_{0})=\min\limits_{x} \max\limits_{q\in[0,P]}H(q,x)=\underline{M}$. So we have $\overline{H}(p)=H(p+v^{\prime}(x_{0}),x_{0})\leq \underline{M}$. Thus
\begin{eqnarray*}
\overline{H}(p)\leq \min\lbrace \overline{H_{1}}(p), \overline{H_{3}}(p), \underline{M} \rbrace
\end{eqnarray*}

If $\overline{H}(p)<\underline{M}$, then by Lemma \ref{Two points control}, we have either $p+v^{\prime}(x)\leq Q,\forall x\in[0,1]$ or $p+v^{\prime}(x)\geq Q,\forall x\in[0,1]$ and so $\overline{H}(p)=\overline{H_{1}}(p)$ or $\overline{H}(p)=\overline{H_{3}}(p)$.

\end{proof}

\begin{proof}[Proof of random case]
STEP 1: Proof of the first equality. Define 
\begin{eqnarray*}
f(\theta):=\esssup\limits\limits_{(x,\omega)\in\RR\times\Omega}\left[H(\theta Q,x,\omega)\right]
\end{eqnarray*}

Then $f(0)=0, f(1)\geq \underline{M}>\overline{m}>0$.

By the continuity of $f$, there is some $\theta_{0}\in(0,1)$, such that $0<f(\theta_{0})<\underline{M}$.

For any $p\leq 0$, $\lambda>0$, let $v_{1,\lambda}(x,p,\omega)$ be the solution of the equation
\begin{eqnarray*}
\lambda v_{1,\lambda}+H_{1}(p+v_{1,\lambda}^{\prime},x,\omega)=0
\end{eqnarray*}

Apply Lemma \ref{Three points control} to $(p,\theta_{0}Q,Q)$ and $H_{1}(p,x,\omega)$, then for a.e. $\omega\in\Omega$, we have: for any $R>0$, there exists $\lambda_{0}=\lambda_{0}(R,p,\omega)>0$,
\begin{eqnarray*}
0<\lambda<\lambda_{0}\implies p+v_{1,\lambda}^{\prime}(x,p,\omega)\leq Q & x\in\left[-\frac{R}{\lambda},\frac{R}{\lambda}\right]
\end{eqnarray*}

Then by the definition of $H_{1}(p,x,\omega)$, we have 
\begin{eqnarray*}
\lambda v_{1,\lambda}+H(p+v_{1,\lambda}^{\prime},x,\omega)=0 & x\in\left[-\frac{R}{\lambda},\frac{R}{\lambda}\right]
\end{eqnarray*}

For any $\lambda>0$, let $v_{\lambda}$ be the unique viscosity solution of the equation
\begin{eqnarray*}
\lambda v_{\lambda}+H(p+v_{\lambda}^{\prime},x,\omega)=0 & x\in\RR
\end{eqnarray*}

By Lemma \ref{CP}, there is $C=C(p)>0$, such that
\begin{eqnarray*}
|\lambda v_{\lambda}(0,p,\omega)-\lambda v_{1,\lambda}(0,p,\omega)|\leq \frac{C}{R}
\end{eqnarray*}

Since $R$ can be chosen arbitrarily large,
\begin{eqnarray*}
\lim_{\lambda\rightarrow 0}-\lambda v_{\lambda}(0,p,\omega)=\lim_{\lambda\rightarrow 0}-\lambda v_{1,\lambda}(0,p,\omega)=\overline{H_{1}}(p)
\end{eqnarray*}

Thus, $H$ is regularly homogenizable at $p$ and 
\begin{eqnarray*}
\overline{H}(p)=\overline{H_{1}}(p) & p\leq 0
\end{eqnarray*}

STEP 2: Proof of the third equality. Similar as the proof of Step 1.

STEP 3: The second equality.

\textbf{(3.1) Claim:} For $p_{0}\in\RR$, if $\overline{H_{1}}(p_{0})<\underline{M}$, then $H(p,x,
\omega)$ is regularly homogenizable at $p_{0}$ and
$\overline{H}(p_{0})=\overline{H_{1}}(p_{0})$.

\begin{proof}[Proof of \textbf{(3.1) Claim}]
By the definition of $H_{1}(p,x,\omega)$,  $\overline{H_{1}}(p_{0})<\underline{M}$ implies $p<Q$(since $\overline{H_{1}}(p)\geq \underline{M}$ for $p\geq Q$). For each $\omega\in\Omega$ and $\lambda>0$, let $v_{\lambda}(x,p_{0},\omega)$ and $v_{1,\lambda}(x,p_{0},\omega)$ be solutions of the equations
\begin{eqnarray*}
\lambda v_{\lambda}+H(p_{0}+v_{\lambda}^{\prime},x,\omega)=0 &&
\lambda v_{1,\lambda}+H_{1}(p_{0}+v_{1,\lambda}^{\prime},x,\omega)=0
\end{eqnarray*} 

By Lemma \ref{Two points control}, for a.e. $\omega\in\Omega$, we have: for each $R>0$, there is $\lambda_{1}=\lambda_{1}(R,p_{0},\omega)>0$, such that
\begin{eqnarray*}
0<\lambda<\lambda_{0}\implies p_{0}+v_{1,\lambda}^{\prime}\leq Q & x\in \left[-\frac{R}{\lambda},\frac{R}{\lambda}\right]
\end{eqnarray*}

So
\begin{eqnarray*}
\lambda v_{1,\lambda}+H(p_{0}+v_{1,\lambda}^{\prime},x,\omega)=0 & x\in \left[-\frac{R}{\lambda},\frac{R}{\lambda}\right]
\end{eqnarray*}

By Lemma \ref{CP}, there is $C=C(p_{0})>0$, such that
\begin{eqnarray*}
|\lambda v_{\lambda}(0,p_{0},\omega)-\lambda v_{1,\lambda}(0,p_{0},\omega)|<\frac{C}{R}
\end{eqnarray*}

Since we can choose arbitrary large $R$, we have that
\begin{eqnarray*}
\lim_{\lambda\rightarrow 0}-\lambda v_{\lambda}(0,p_{0},\omega=\lim_{\lambda\rightarrow 0}-\lambda v_{1,\lambda}(0,p_{0},\omega=\overline{H_{1}}(p_{0})
\end{eqnarray*}

Thus $H(p,x,\omega)$ is regularly homogenizable at $p_{0}$ and $\overline{H}(p_{0})=\overline{H_{1}}(p_{0})$.
\end{proof}

\textbf{(3.2) Claim:} For $p_{0}\in\RR$, if $\overline{H_{3}}(p_{0})<\underline{M}$, then $H(p,x,
\omega)$ is regularly homogenizable at $p_{0}$ and $\overline{H}(p_{0})=\overline{H_{3}}(p_{0})$.

\begin{proof}[Proof of \textbf{(3.2) Claim}]
The proof is similar as \textbf{(3.1) Claim}.
\end{proof}

\textbf{(3.3)} Denote
\begin{eqnarray*}
q_{1}=\min\big\lbrace p\in[0,P]\big|\overline{H_{1}}(p)=\underline{M} \big\rbrace & q_{2}=\max\big\lbrace p\in[0,P]\big|\overline{H_{3}}(p)=\underline{M} \big\rbrace
\end{eqnarray*}

\textbf{(3.1)}, \textbf{(3.2)} $\implies$ $H(p,x,\omega)$ is regularly homogenizable for $p\in(0,q_{1})\bigcup(q_{2},P)$ and 
\begin{eqnarray*}
\overline{H}(p)=\begin{cases}
\overline{H_{1}}(p) & p\in(0,q_{1})\\
\overline{H_{3}}(p) & p\in(q_{2},P)
\end{cases}
\end{eqnarray*}

By Corollary \ref{closedness of regularly homogenizable points}, $H(p,x,\omega)$ is regularly homogenizable at $q_{1}$ and $q_{2}$ and 
\begin{eqnarray*}
\overline{H}(q_{1})=\overline{H}(q_{2})=\underline{M}
\end{eqnarray*}

\textbf{(3.4) Claim:} $H_{2}(p,x,\omega)$ is regularly homogenizable at $q_{1}$ and $q_{2}$, moreover,
\begin{eqnarray*}
\overline{H_{2}}(q_{1})=\overline{H_{2}}(q_{2})=\underline{M}
\end{eqnarray*}

\begin{proof}[Proof of \textbf{(3.4) Claim}]
By the definition, we have $q_{1}, q_{2}\in(0,P_{0})$. For any $\omega\in\Omega$, $\lambda>0$, let $v_{\lambda}(x,q_{i},\omega)$ and $v_{2,\lambda}(x,q_{i},\omega)$ be solutions to the following equations
\begin{eqnarray*}
\lambda v_{\lambda}+H(q_{i}+v_{\lambda}^{\prime},x,\omega)=0 &&
\lambda v_{2,\lambda}+H_{2}(q_{i}+v_{2,\lambda}^{\prime},x,\omega)=0
\end{eqnarray*}

By the fact that
\begin{eqnarray*}
\overline{H}(q_{i})=\underline{M}>\max\lbrace \esssup\limits\limits_{(x,\omega)}H(0,x,\omega), H(P,x,\omega) \rbrace=\overline{m}
\end{eqnarray*}

By Lemma \ref{Two points control}, then for a.e. $\omega\in\Omega$, for any $R>0$, there is $\lambda_{2}=\lambda_{2}(q_{i},R,\omega)>0$,
\begin{eqnarray*}
0<\lambda<\lambda_{2}\implies 0\leq q_{i}+v_{\lambda}(x,q_{i},\omega)\leq Q & x\in\left[-\frac{R}{\lambda},\frac{R}{\lambda}\right]
\end{eqnarray*}

So we have
\begin{eqnarray*}
\lambda v_{\lambda}+H_{2}(q_{i}+v_{\lambda}^{\prime},x,\omega)=0 & x\in\left[-\frac{R}{\lambda},\frac{R}{\lambda}\right]
\end{eqnarray*}

Apply Lemma \ref{CP}, there is some constant $C=C(q_{i})>0$, such that
\begin{eqnarray*}
|\lambda v_{\lambda}(0,q_{i},\omega)-\lambda v_{2,\lambda}(0,q_{i},\omega)|<\frac{C}{R}
\end{eqnarray*}

We can choose arbitrary large $R$, so
\begin{eqnarray*}
\lim_{\lambda\rightarrow 0}-\lambda v_{2,\lambda}(0,q_{i},\omega)=\lim_{\lambda\rightarrow 0}-\lambda v_{\lambda}(0,q_{i},\omega)=\overline{H}(q_{i})=\underline{M}
\end{eqnarray*}

Thus, $H_{2}(p,x,\omega)$ is regularly homogenizable at $q_{i}$ and $\overline{H_{2}}(q_{i})=\underline{M}$.
\end{proof}

\textbf{(3.5)}
Denote
\begin{eqnarray*}
\widehat{M}(x,\omega):=\max_{\substack{\overline{k}\leq j \leq L\\
j\ne \underline{k}}}M_{j}(x,\omega)
\end{eqnarray*}

Then we have
\begin{eqnarray*}
\underline{\widehat{M}}:=\essinf\limits\limits_{(x,\omega)\in\RR\times\Omega}\widehat{M}(x,\omega)\leq \underline{M}
\end{eqnarray*}

By Lemma \ref{Stability}, without loss of generality, we can further assume $\underline{\widehat{M}}<\underline{M}$. This means that $\mathbf{E}[\widehat{M}(x,\omega)<\underline{M}]>0$. Denote $\widetilde{H}(p,x,\omega):=-H_{2}(q_{\underline{k},0}-p,x,\omega)+\underline{M}$.

If $w_{\lambda}(x,p,\omega)$ is a viscosity solution to 
\begin{eqnarray*}
\lambda w_{\lambda}+H_{2}(p+w_{\lambda}^{\prime},x,\omega)=0
\end{eqnarray*}

Then $\widetilde{w}_{\lambda}(x,p,\omega):=-w_{\lambda}(x,p,\omega)$ is a viscosity solution to 
\begin{eqnarray*}
\lambda \widetilde{w}_{\lambda}+\widetilde{H}(q_{\underline{k},0}-p+\widetilde{w}_{\lambda}^{\prime},x,\omega)+\underline{M}=0
\end{eqnarray*}

Apply Lemma \ref{squeeze lemma} to $\widetilde{H}(p,x,\omega)$, we deduce that $\overline{H_{2}}|_{[q_{1},q_{2}]}\equiv \underline{M}$.

\textbf{(3.6)}For each $p\in[q_{1},q_{2}]$, let $v_{\lambda}(x,p,\omega)$ be the solution to 
\begin{eqnarray*}
\lambda v_{\lambda}(x,p,\omega)+H(p+v_{\lambda}^{\prime}(x,p,\omega),x,\omega)=0
\end{eqnarray*}

By that fact that $H(p,x,\omega)\geq H_{2}(p,x,\omega)$, we have
\begin{eqnarray*}
\mathbf{E}[\omega\in\Omega|\liminf_{\lambda\rightarrow 0}-\lambda v_{\lambda}(0,p,\omega)\geq\underline{M}]=1
\end{eqnarray*}

We only need to show
\begin{eqnarray*}
\mathbf{E}[\omega\in\Omega|\limsup_{\lambda\rightarrow 0}-\lambda v_{\lambda}(0,p,\omega)\leq\underline{M}]=1
\end{eqnarray*}

\textbf{(3.7)} Define $\widehat{H}_{2}(p,x,\omega)$ as following: 
\begin{eqnarray*}
\widehat{H}_{2}(p,x,\omega)=\begin{cases}
H_{2}(p,x,\omega) & p\in(-\infty,0)\cup (P,\infty)\\
\text{concave envelope of }H_{2}(p,x,\omega)|_{p\in[0,P]} & p\in[0,P]\\
\end{cases}
\end{eqnarray*}

By definition, $\widehat{H}_{2}(p,x,\omega)$ is determined by those stationary functions: $m_{i}(x,\omega), M_{j}(x,\omega)$, $1\leq i,j\leq L$, so $\widehat{H}_{2}(p,x,\omega)$ is stationary. Then by the theory of level-set convex homogenization, $\widehat{H}_{2}(p,x,\omega)$ can be homogenized to some level-set concave effective Hamiltonian $\overline{\widehat{H}}_{2}(p)\leq \underline{M}$. 

Since $\widehat{H}_{2}(p,x,\omega)\geq H_{2}(p,x,\omega)$, there exists $\widehat{q}_{1}<\widehat{q}_{2}$ such that $[q_{1},q_{2}]\subset[\widehat{q}_{1},\widehat{q}_{2}]$ and $\overline{\widehat{H}}_{2}(\widehat{q}_{1})=\overline{\widehat{H}}_{2}(\widehat{q}_{2})=\underline{M}$. By level-set concavity, $\overline{\widehat{H}}_{2}|_{[\widehat{q}_{1},\widehat{q}_{2}]}=\underline{M}.$

Denote
\begin{eqnarray*} \widetilde{H}_{2}(p,x,\omega)=\min\lbrace \widehat{H}_{2}(p,x,\omega),\underline{M} \rbrace
\end{eqnarray*}

Then $\widetilde{H}_{2}(p,x,\omega)$ has a level-set concave effective Hamiltonian $\overline{\widetilde{H}}_{2}(p)$ with
\begin{eqnarray*}
\overline{\widetilde{H}}_{2}|_{[\widehat{q}_{1},\widehat{q}_{2}]}=\underline{M}
\end{eqnarray*}

For any $p_{1}\in[\widehat{q}_{1},\widehat{q}_{2}]$ and $\lambda>0$, let $\widehat{v}_{\lambda}(x,p_{1},\omega)$ be the solution of the equation
\begin{eqnarray*}
\lambda \widehat{v}_{2,\lambda}+\widehat{H}_{2}(p_{1}+\widehat{v}_{2,\lambda}^{\prime},x,\omega)=0
\end{eqnarray*}

We will have
\begin{eqnarray*}
\lim_{\lambda\rightarrow 0}\inf_{x\in B_{\frac{R}{\lambda}}}-\lambda \widehat{v}_{2,\lambda}(x,p_{1},\omega)\geq \underline{M}
\end{eqnarray*}

Since $p_{1}<P$ and $0<\overline{m}<\underline{M}$, by Lemma \ref{Two points control}, we have that: for a.e. $\omega\in\Omega$, any $R>0$, there is some $\lambda_{0}=\lambda_{0}(R,p_{1},\omega)>0$, when $\lambda<\lambda_{0}$,
\begin{eqnarray*}
0\leq p_{1}+\widehat{v}_{2,\lambda}^{\prime}(x,p_{1},\omega)\leq P& x\in \left[-\frac{R}{\lambda},\frac{R}{\lambda}\right]
\end{eqnarray*}

Define
\begin{eqnarray*}
\widehat{H}(p,x,\omega)=\begin{cases}
H(p,x,\omega) & p\in (-\infty,0)\cup (P,\infty)\\
\widehat{H}_{2}(p,x,\omega) & p\in[0,P]\\
\end{cases}
\end{eqnarray*}

For each $\omega\in\Omega$ and $\lambda>0$, let $\widehat{v}_{\lambda}(x,p_{1},\omega)$ be the solution of the equation
\begin{eqnarray*}
\lambda \widehat{v}_{\lambda}+\widehat{H}(p_{1}+\widehat{v}_{\lambda}^{\prime},x,\omega)=0
\end{eqnarray*}

Thus
\begin{eqnarray*}
\lambda \widehat{v}_{2,\lambda}+\widehat{H}(p_{1}+\widehat{v}_{2,\lambda}^{\prime},x,\omega)=0 & x\in \left[-\frac{R}{\lambda},\frac{R}{\lambda}\right]
\end{eqnarray*}

By Lemma \ref{CP}, there is some constant $C=C(p_{1})>0$, such that
\begin{eqnarray*}
\Big|\lambda \widehat{v}_{\lambda}(0,p_{1},\omega)-\lambda \widehat{v}_{2,\lambda}(0,p_{1},\omega)\Big|\leq \frac{C}{R}
\end{eqnarray*}

We can choose arbitrary large $R$, so
\begin{equation*}
\lim_{\lambda\rightarrow 0}-\lambda \widehat{v}_{\lambda}(0,p_{1},\omega)=\lim_{\lambda\rightarrow 0}-\lambda \widehat{v}_{2,\lambda}(0,p_{1},\omega)=\underline{M}
\end{equation*}

This means that
\begin{equation*}
\overline{\widehat{H}}|_{[q_{1},q_{2}]}\equiv \underline{M}
\end{equation*}

By the fact that $\widehat{H}(p,x,\omega)\geq H(p,x,\omega)$, we have

\begin{equation*}
\mathbf{E}[\omega\in\Omega|\limsup_{\lambda\rightarrow 0}-\lambda v_{\lambda}(0,p,\omega)\leq\underline{M}]=1
\end{equation*}

This completes the proof.

\end{proof}

\begin{lemma}\label{approx M=m }
Let $H(p,x,\omega)$ be constrained Hamiltonian that satisfies \textbf{(A1)-(A3)} and $\underline{M}=\overline{m}$, then there is a family of Hamiltonians $\lbrace H_{n}(p,x,\omega) \rbrace_{n\in\NN}$, each $H_{n}(p,x,\omega)$ is a constrained Hamiltonian and satisfies \textbf{(A1)-(A3)}, moreover, we have $\underline{M_{n}}>\overline{m_{n}}$ and
\begin{eqnarray*}
\lVert H_{n}(p,x,\omega)-H(p,x,\omega) \rVert_{L^{\infty}(\RR\times\RR\times\Omega)}\leq \frac{1}{n}
\end{eqnarray*}
\end{lemma}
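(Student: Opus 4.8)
The plan is to obtain $H_{n}$ from $H$ by adding a small, purely $p$-dependent ``sawtooth'' correction that raises every local maximum and lowers every local minimum of $H$ by the same amount $\tfrac1n$, thereby opening a gap of size $\tfrac2n$ between $\underline{M_{n}}$ and $\overline{m_{n}}$ without disturbing anything else. Recall that, being constrained of index $(0,L)$, for each $(x,\omega)$ the function $H(\cdot,x,\omega)$ is, by parts (2)--(3) of Definition \ref{constrained Hamiltonian}, decreasing on the branches $(-\infty,0]$ and $[q_{i},p_{i}]$ ($1\le i\le L$) and increasing on $[0,q_{L}]$, on $[p_{i+1},q_{i}]$ ($1\le i\le L-1$) and on $[p_{1},+\infty)$; in particular its local extrema lie at the fixed $p$-values $0<q_{L}<p_{L}<\cdots<q_{1}<p_{1}$, which do not depend on $(x,\omega)$.

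I would then set $\epsilon_{n}:=\tfrac1n$ and let $\eta_{n}\colon\RR\to\RR$ be the continuous, piecewise-affine function that is $\equiv 0$ on $(-\infty,0]$, equals $+\epsilon_{n}$ at each $q_{i}$ and $-\epsilon_{n}$ at each $p_{i}$, is affine between consecutive nodes, and is $\equiv -\epsilon_{n}$ on $[p_{1},+\infty)$. By construction $\eta_{n}$ is monotone on each of the branches above in the same sense as $H$, is Lipschitz on $\RR$ (finitely many affine pieces plus two constant rays), and $\lVert\eta_{n}\rVert_{L^{\infty}(\RR)}=\epsilon_{n}$. Put $H_{n}(p,x,\omega):=H(p,x,\omega)+\eta_{n}(p)$. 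The remaining checks are routine: since $\eta_{n}$ depends on $p$ only, $H_{n}$ satisfies \textbf{(A1)} because $H$ does; $H_{n}\ge H-\epsilon_{n}$ is coercive, so \textbf{(A2)} holds; $H_{n}$ satisfies \textbf{(A3)} with modulus $r\mapsto\rho_{K}(r)+(\mathrm{Lip}\,\eta_{n})\,r$; and $\lVert H_{n}-H\rVert_{L^{\infty}(\RR\times\RR\times\Omega)}=\lVert\eta_{n}\rVert_{\infty}=\tfrac1n$. Because $\eta_{n}$ is monotone on each branch of $H$ in the same sense, $H_{n}(\cdot,x,\omega)$ is monotone there in the same sense, so $H_{n}$ has exactly the same $L$ local extrema with the same breakpoints $a_{i},b_{j}$, is Lipschitz in $p$ uniformly in $(x,\omega)$, and -- since $H_{n}(a_{i},\cdot,\omega)$, $H_{n}(b_{j},\cdot,\omega)$ differ from $H(a_{i},\cdot,\omega)$, $H(b_{j},\cdot,\omega)$ only by additive constants and hence have the same level sets in $x$ -- inherits the no-cluster-point property (5). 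Thus each $H_{n}$ is a constrained Hamiltonian satisfying \textbf{(A1)-(A3)}.

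It remains to evaluate the oscillation quantities. For each fixed $(x,\omega)$, the extreme values of $H_{n}$ are $H_{n}(q_{i},x,\omega)=H(q_{i},x,\omega)+\epsilon_{n}$ and $H_{n}(p_{i},x,\omega)=H(p_{i},x,\omega)-\epsilon_{n}$, so the functions $M,m$ of (\ref{m}), (\ref{M}) attached to $H_{n}$ are $M(\cdot,\cdot)+\epsilon_{n}$ and $m(\cdot,\cdot)-\epsilon_{n}$; taking $\essinf$ and $\esssup$ over $(x,\omega)$ gives $\underline{M_{n}}=\underline{M}+\epsilon_{n}$ and $\overline{m_{n}}=\overline{m}-\epsilon_{n}$, whence $\underline{M_{n}}-\overline{m_{n}}=(\underline{M}-\overline{m})+\tfrac2n=\tfrac2n>0$, using the hypothesis $\underline{M}=\overline{m}$.

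The one point that genuinely requires care -- and the reason the correction must be independent of $(x,\omega)$ -- is precisely this last computation: a uniform vertical shift of the peaks and valleys translates the essential infimum and supremum by exactly the same constant, while an $(x,\omega)$-dependent bump would in general fail to separate $\underline{M_{n}}$ from $\overline{m_{n}}$. (Moreover, for $n$ large one has $\esssup_{(x,\omega)}H_{n}(0,x,\omega)=0$ and $\esssup_{(x,\omega)}H_{n}(p_{j},x,\omega)=\esssup_{(x,\omega)}H(p_{j},x,\omega)-\epsilon_{n}>0$, so such $H_{n}$ is in fact constrained of index $(0,L)$; this is what makes it possible, via Lemma \ref{Stability}, to reduce the borderline case $\underline{M}=\overline{m}$ to the strictly small-oscillation cases treated in Lemmas \ref{left steep side} and \ref{right steep side}.)
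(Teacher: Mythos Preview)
Your proof is correct. The approach differs from the paper's: the paper subtracts a single ``tent'' function $h_{n}$ supported on the interval $[q_{\underline{k}},q_{\underline{k}-1}]$ with peak $\tfrac1n$ at $p_{\overline{k}}$, thereby lowering only the particular local minimum $m_{\overline{k}}$ that realises $\overline{m}$; you instead add a global piecewise-affine sawtooth $\eta_{n}$ that simultaneously raises \emph{every} local maximum by $\tfrac1n$ and lowers \emph{every} local minimum by $\tfrac1n$. Your construction is arguably cleaner: because the correction is uniform across all peaks and valleys, the computation $\underline{M_{n}}=\underline{M}+\tfrac1n$, $\overline{m_{n}}=\overline{m}-\tfrac1n$ is immediate and does not depend on any relation between the indices $\underline{k}$ and $\overline{k}$ (the paper's formula for $h_{n}$ tacitly needs $q_{\underline{k}}<p_{\overline{k}}<q_{\underline{k}-1}$). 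The paper's approach, on the other hand, is more economical in that it perturbs $H$ on a single interval. Both yield the required $\tfrac1n$-close constrained approximants with strict small oscillation.

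One minor remark: as you note, the index condition $\esssup_{(x,\omega)}H_{n}(p_{j},x,\omega)>0$ may fail for small $n$; since the lemma only asserts the existence of \emph{some} approximating sequence, you can simply reindex (start from $n$ large enough that $\tfrac1n<\min_{j}\esssup_{(x,\omega)}H(p_{j},x,\omega)$), or observe that the statement as written only asks for ``constrained'' in the sense of Definition~\ref{constrained Hamiltonian}, which does not involve the index-$(0,L)$ sign condition.
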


\begin{proof}
For each $n\in\NN$, define the function
\begin{eqnarray*}
h_{n}(p,x,\omega):=\begin{cases}
\frac{p-q_{\underline{k}}}{n(p_{\overline{k}}-q_{\underline{k}})} & p\in[q_{\underline{k}},p_{\overline{k}}]\\
-\frac{p-p_{\overline{k}}}{n(q_{\underline{k}-1}-p_{\overline{k}})}+\frac{1}{n} & p\in(p_{\overline{k}},q_{\underline{k}-1})\\
0 & \text{elsewhere}\\
\end{cases}
\end{eqnarray*}

And define
\begin{eqnarray*}
H_{n}(p,x,\omega):=H(p,x,\omega)-h_{n}(p,x,\omega)
\end{eqnarray*}

Since $q_{\underline{k}}(x,\omega)$, $p_{\overline{k}}(x,\omega)$ and $q_{\underline{k}-1}(x,\omega)$ are all stationary, $H_{n}(p,x,\omega)$ is also stationary. By the construction, we have
\begin{eqnarray*}
\overline{m_{n}}=\overline{m}-\frac{1}{n}=\underline{M}-\frac{1}{n}<\underline{M_{n}}-\frac{1}{n}<\underline{M_{n}}
\end{eqnarray*}

Moreover
\begin{eqnarray*}
\lVert H_{n}(p,x,\omega)-H(p,x,\omega) \rVert_{L^{\infty}(\RR\times\RR\times\Omega)}= \frac{1}{n}
\end{eqnarray*}

\end{proof}

\begin{remark}
	In Lemma \ref{approx M=m }, if those $H_{n}(p,x,\omega)$ are regularly homogenizable for all $p\in\RR$, then according to Lemma \ref{Stability}, $H(p,x,\omega)$ is also regularly homogenizable and $\overline{H}(p)=\lim\limits_{n\rightarrow \infty}\overline{H_{n}}(p).$
\end{remark}

\begin{remark}
The point of Lemma \ref{left steep side}, Lemma \ref{right steep side} and Lemma \ref{approx M=m } is: to prove the homogenization of constrained Hamiltonian $H(p,x,\omega)$ with index $(0,L)$ and has small oscillation, it suffices to study the homogenization of constrained Hamiltonian $H(p,x,\omega)$ with index $(0,L)$ and has large oscillation. 
\end{remark}

\section{Auxiliary Lemmas for Large Oscillation}\label{Auxiliary Lemmas for Large Oscillation}

\subsection{Existence Lemma}
\begin{lemma}\label{existence lemma}
Let Hamiltonian $H(p,x,\omega)$ satisfy \textbf{(A1)-(A3)} and be constrained with index $(0,L)$, then for any $\mu\geq 0, \omega\in\Omega$, there is a Lipschitz continuous viscosity solution $u(x,\omega)$ to the equation:
\begin{eqnarray*}
\begin{cases}
H(u^{\prime},x,\omega)=\mu\\
u^{\prime}\geq 0
\end{cases} && x\in\RR
\end{eqnarray*}
\end{lemma}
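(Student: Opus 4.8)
The plan is to construct $u$ by Perron's method adapted to the gradient constraint, after recording the pointwise bounds furnished by the constrained structure.

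Fix $\mu\ge 0$ and $\omega\in\Omega$. By coercivity \textbf{(A2)} there is $r=r(\mu)>0$ with $H(p,x,\omega)>\mu$ whenever $|p|>r$; in particular every Lipschitz function satisfying $H(u',x,\omega)=\mu$ a.e.\ has $\|u'\|_{L^{\infty}}\le r$. Since $H$ is constrained with index $(0,L)$ we have $\esssup_{(x,\omega)}H(0,x,\omega)=0$, so by Lemma \ref{for a.e. omega} and continuity of $H$ in $x$, $H(0,x,\omega)\le 0\le\mu$ for every $x$ and a.e.\ $\omega$; fix such an $\omega$ (the exceptional null set is harmless for the applications). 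Hence the constant $w\equiv 0$ is a viscosity subsolution of $H(w',x,\omega)=\mu$ satisfying the constraint $w'\equiv 0\ge 0$, so the class
\[
\mathcal{S}:=\bigl\{\, w\in W^{1,\infty}(\RR)\ :\ w \text{ is a viscosity subsolution of } H(w',x,\omega)=\mu,\ \ w'\ge 0 \text{ a.e.},\ \ w(0)=0 \,\bigr\}
\]
is nonempty, and every $w\in\mathcal{S}$ obeys $|w'|\le r$ a.e., hence $|w(x)|\le r|x|$.

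Set $u(x,\omega):=\sup_{w\in\mathcal{S}}w(x)$. As the supremum of a nonempty, uniformly Lipschitz family of nondecreasing functions, $u$ is Lipschitz with constant $r$, $u(0,\omega)=0$, and nondecreasing, so $u'\ge 0$ a.e.; joint measurability in $(x,\omega)$ follows from a routine measurable selection argument. I claim $u$ is a viscosity solution of $H(u',x,\omega)=\mu$. That $u$ is a subsolution is the standard fact that a supremum of a uniformly Lipschitz family of viscosity subsolutions is again one. For the supersolution property I would run the usual Perron bump argument: if $u$ failed to be a supersolution at some $x_0$, there would be $\phi\in C^{1}$ with $u-\phi$ attaining a strict local minimum at $x_0$ and $H(\phi'(x_0),x_0,\omega)<\mu$; since $u$ is nondecreasing one checks $\phi'(x_0)\ge 0$, and by \textbf{(A3)} there are $\rho,\epsilon_0>0$ with $H(\phi'(x)+s,x,\omega)<\mu$ for $|x-x_0|<\rho$, $|s|<\epsilon_0$. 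Then, with $\chi$ a fixed smooth bounded strictly increasing function vanishing at $0$, and with $\delta,\epsilon>0$ small chosen in the usual way so that the bump $\phi+\delta+\epsilon\chi(\cdot-x_0)$ exceeds $u$ at $x_0$ but coincides with $u$ near $\partial B_\rho(x_0)$, the function $\max\{u,\phi+\delta+\epsilon\chi(\cdot-x_0)\}$ lies in $\mathcal{S}$ and is strictly larger than $u$ somewhere, contradicting the definition of $u$. Hence $u$ is a viscosity solution, and at a.e.\ $x$ we have $H(u'(x),x,\omega)=\mu$ with $u'(x)\ge 0$; Lipschitz continuity is immediate from $\|u'\|_{L^{\infty}}\le r$.

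The one point requiring care — and the main, if mild, obstacle — is this constrained version of the Perron bump: the competitor must stay in $\{w'\ge 0\}$, which is why $\chi$ is taken monotone, so that near $x_0$ the competitor's slope stays nonnegative while still landing in the region where $H<\mu$. It is worth noting that the naive alternative, namely selecting for each $x$ an explicit root $\xi(x)\in\{p\ge 0:H(p,x,\omega)=\mu\}$ (say the largest one, or the first beyond which $H$ stays above $\mu$) and setting $u(x)=\int_0^{x}\xi(s)\,ds$, does not work directly: at a point $x_0$ where some turning value $m_i(x_0,\omega)$ or $M_i(x_0,\omega)$ equals $\mu$, the selection can jump and produce a corner of $u$ over an interval on which $H(\cdot,x_0,\omega)$ has the wrong sign relative to $\mu$, so that the sub- or the supersolution inequality fails at $x_0$. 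Such points are isolated by the no-cluster-point hypothesis, and a genuinely continuous root selection can in fact be built by routing $\xi$ over the inactive bumps near each such point; but Perron's method bypasses this case analysis entirely.
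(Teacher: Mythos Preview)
Your constrained Perron approach is natural, but the bump argument has a genuine gap in the case $\phi'(x_0)=0$. You assert that taking $\chi$ monotone keeps the competitor's slope nonnegative; however, the full bump $b=\phi+\delta+\epsilon\chi(\cdot-x_0)$ has derivative $b'=\phi'+\epsilon\chi'$, and for $u-\phi$ to have a \emph{strict} local minimum at $x_0$ with $\phi'(x_0)=0$ the test function $\phi$ must curve downward on at least one side of $x_0$, forcing $\phi'(x)<0$ there. The correction $\epsilon\chi'$ is of size $O(\epsilon)$ and cannot compensate unless $\epsilon$ is bounded below, which conflicts with the requirement that $\epsilon$ be small enough for the competitor to coincide with $u$ near $\partial B_\rho(x_0)$. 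Concretely, if $u$ is constant near $x_0$ and $\phi(x)=u(x_0)-(x-x_0)^2$, one checks that $b$ is strictly decreasing on part of the interval where $b>u$, so $\max\{u,b\}$ is not nondecreasing and does not lie in $\mathcal{S}$. Thus the Perron contradiction does not close.

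The paper avoids this difficulty by running \emph{unconstrained} Perron on each half-line $[a,\infty)$, sandwiching between the constant subsolution $p_0 a$ and the linear supersolution $p_0 x$ (where $H(p_0,\cdot,\omega)>\mu$ by coercivity). This produces solutions $u_a$ on $(a,\infty)$ with no gradient constraint to enforce in the bump step. Monotonicity $u_a'\ge 0$ is derived \emph{a posteriori}: a maximality argument shows that for $a<b$ the difference $u_b-u_a$ is constant on $(b,\infty)$, and letting $b\uparrow$ forces $u_a$ to be nondecreasing. The half-line solutions are then glued into a global one on $\RR$. If you wish to rescue the direct constrained route, you would need either a different competitor when $\phi'(x_0)=0$ (for instance, first replacing $\phi$ by a nondecreasing $C^1$ minorant of $u$ that still touches at $x_0$) or a separate argument ruling out flat pieces of the Perron supremum; as written, the key step does not go through.
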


\begin{proof}
Fix $\mu\geq 0$ and $\omega\in\Omega$. By \textbf{(A2)}, there exists $p_{0}>0$, such that $H(p_{0},x,\omega)> \mu$. Since $H(0,x,\omega)\leq \mu$, $u_{+}:=p_{0}x$ is a super-solution and $u_{-}:=C$ is a sub-solution for any constant $C$.

\vspace{3mm}
STEP 1. Fix $a\in\RR$ and let $C_{a}:=p_{0}a$, then
\begin{eqnarray*}
u_{+}(a,\omega)=u_{-}(a,\omega)\text{ and }u_{+}(x,\omega)>u_{-}(x,\omega) & \forall x\in(a,\infty)
\end{eqnarray*}

Define
\begin{eqnarray*}
u_{a}(x,\omega)&:=&\sup_{v}\left\lbrace v(x,\omega)\in C([a,\infty))| H(v^{\prime},x,\omega)\leq \mu, C_{a}\leq v(x,\omega)\leq p_{0}x \right\rbrace
\end{eqnarray*}

Then
\begin{eqnarray*}
\begin{cases}
H(u_{a}^{\prime},x,\omega)=\mu & x\in(a,\infty)\\
u_{a}(a,\omega)=p_{0}a
\end{cases}
\end{eqnarray*}

\vspace{3mm}
STEP 2. Fix any $a<b$, denote
\begin{eqnarray*}
w(x,\omega):=u_{a}(x,\omega)+[u_{b}(b,\omega)-u_{a}(b,\omega)] && x\geq b
\end{eqnarray*}

Then
\begin{eqnarray*}
\begin{cases}
H(w^{\prime},x,\omega)=\mu & x\in(b,\infty)\\
w(b,\omega)=p_{0}b
\end{cases}
\end{eqnarray*}

So $u_{b}(x,\omega)\geq w(x,\omega)$ on $[b,\infty)$. Denote
\begin{eqnarray*}
\widetilde{u}_{a}(x,\omega):=\begin{cases}
u_{a}(x,\omega) & x\in[a,b]\\
u_{b}(x,\omega)-u_{b}(b,\omega)+u_{a}(b,\omega) & x\in(b,\infty)\\
\end{cases}
\end{eqnarray*}

Then
\begin{eqnarray*}
p_{0}x\geq\widetilde{u}_{a}(x,\omega)\geq u_{a}(x,\omega)\geq C_{a}, \ x\in [a,\infty)
\end{eqnarray*}

On the other hand, by the construction, $\widetilde{u}_{a}(x,\omega)$ is a sub-solution, so $\widetilde{u}_{a}(x,\omega)\leq u_{a}(x,\omega)$. 

Thus $\widetilde{u}_{a}(x,\omega)\equiv u_{a}(x,\omega)$, which means
\begin{eqnarray*}
\left(u_{b}(x,\omega)-u_{a}(x,\omega)\right)|_{(b,\infty)}\equiv u_{b}(b,\omega)-u_{a}(b,\omega)
\end{eqnarray*}

The above equality is true for any $a<b$, this also implies $u_{a}^{\prime}(x,\omega)\geq 0$.

\vspace{3mm}

STEP 3. For any $n\in\ZZ$, then
\begin{eqnarray*}
u_{n}(x,\omega)-u_{n}(0,\omega)=u_{n+1}(x,\omega)-u_{n+1}(0,\omega) && \forall x\geq n+1
\end{eqnarray*}

For any $x\in\RR$ let $m:=[x]$ and define
\begin{eqnarray*}
u(x,\omega):=u_{m}(x,\omega)-u_{m}(0,\omega)
\end{eqnarray*}

So $u(x,\omega)$ is a well defined Lipschitz function on $\RR$ and it is the solution of the equation
\begin{eqnarray*}
\begin{cases}
H(u^{\prime},x,\omega)=\mu \\
u^{\prime}\geq 0
\end{cases} && x\in\RR
\end{eqnarray*}

\end{proof}

\subsection{Decomposition Lemma}

\begin{lemma}\label{decompsition lemma}
Let $H(p,x,\omega)$ satisfy \textbf{(A1)-(A3)} and be constrained with index $(0,L)$.
Let $u$ be a Lipschitz continuous viscosity solution of the equation
\begin{eqnarray*}
\begin{cases}
H(u^{\prime}(x,\omega),x,\omega)=\mu\geq 0 &\\
u^{\prime}(x,\omega)\geq 0
\end{cases}
x\in\RR
\end{eqnarray*}
Then there is a sequence $\lbrace b_{i} \rbrace_{i\in\ZZ}$, such that
\begin{eqnarray*}
\lim\limits_{i\rightarrow \pm \infty}b_{i}=\pm \infty, u\in C^{1}(I_{i}), I_{i}=(b_{i},b_{i+1})
\end{eqnarray*}
\begin{eqnarray*}
u^{\prime}(x,\omega)|_{I_{i}}=\psi_{k_{i},(x,\omega)}(\mu) \text{ for some } k_{i}\in\lbrace 1,2,\cdots,2L+1 \rbrace
\end{eqnarray*}
\end{lemma}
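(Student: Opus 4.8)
The plan is to extract the structure of the solution directly from the constrained shape of $H$. For fixed $\omega$, since $u$ is Lipschitz, $u'$ exists a.e.; the equation $H(u'(x,\omega),x,\omega)=\mu$ together with $u'(x,\omega)\ge 0$ forces, at each point of differentiability, $u'(x,\omega)$ to lie in the (finite) level set $\{p\ge 0 : H(p,x,\omega)=\mu\}$. Because $H(\cdot,x,\omega)$ is constrained with index $(0,L)$, on $[0,\infty)$ it is piecewise monotone with exactly $2L+1$ monotone branches $\phi_{1,(x,\omega)},\dots,\phi_{2L+1,(x,\omega)}$, so this level set has at most $2L+1$ elements, each given by an inverse branch $\psi_{k,(x,\omega)}(\mu)$ (with the convention that $\psi_{k,(x,\omega)}(\mu)$ is simply absent when $\mu$ is outside the range of $\phi_{k,(x,\omega)}$). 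Thus at a.e. $x$, $u'(x,\omega)=\psi_{k,(x,\omega)}(\mu)$ for some admissible $k=k(x)\in\{1,\dots,2L+1\}$.

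Next I would upgrade this pointwise-a.e. statement to a locally $C^1$ structure. The key device is that, by \textbf{(A3)} and the constrained assumption, each inverse branch $\psi_{k,(x,\omega)}(\mu)$ is continuous in $x$ (the branch points $q_L,p_L,\dots,q_1,p_1$ and $0$ are fixed, and $H$ is jointly continuous with uniform-in-$x$ Lipschitz dependence on $p$, giving a modulus of continuity for $x\mapsto\psi_{k,(x,\omega)}(\mu)$). Fix $x_0\in\RR$ and an admissible index $k_0$ at $x_0$. If $\mu$ is not a critical value of $H(\cdot,x_0,\omega)$ (i.e. $\psi_{k_0,(x_0,\omega)}(\mu)$ is not one of the extremum points $p_i$, $q_i$), then in a neighborhood of $x_0$ the candidate value $\psi_{k_0,(x,\omega)}(\mu)$ stays strictly inside the interior of the $k_0$-th branch and is the unique admissible value near it, so $u'$ must equal it there and $u\in C^1$ locally; the set of $x$ where $u'(x,\omega)$ hits an extremum point is where branch changes occur, and one shows (using that the extreme-value functions $m_i,M_i$ have no cluster points, together with continuity) that this exceptional set is locally finite. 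Hence $\RR$ decomposes into a locally finite union of open intervals $I_i=(b_i,b_{i+1})$ on each of which $u\in C^1$ and $u'|_{I_i}=\psi_{k_i,(x,\omega)}(\mu)$ for a single index $k_i$, with $b_i\to\pm\infty$.

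The main obstacle is the passage from "a.e." to "on intervals," i.e. controlling the branch-switching set. A viscosity solution could a priori oscillate between two branches on a set with complicated topology; what rules this out is a combination of two facts. First, at any point $x_0$ where $u'$ could switch branches the two branch values $\psi_{k,(x_0,\omega)}(\mu)$ and $\psi_{k',(x_0,\omega)}(\mu)$ must agree (since $u'$ is the a.e.-derivative of a Lipschitz function and cannot jump on a set of positive measure without violating the viscosity inequalities at a crossover point — here one invokes the one-dimensional structure and the argument, familiar from the quasiconvex 1d theory, that at a local max/min of $u(x)-px$ the value $H(p,x,\omega)$ is pinched), which means $\mu$ equals a common extreme value $M_i(x_0,\omega)$ or $m_i(x_0,\omega)$; second, by hypothesis each $M_i(\cdot,\omega)$ and $m_i(\cdot,\omega)$ has no cluster point, so the level set $\{x: M_i(x,\omega)=\mu\}\cup\{x:m_i(x,\omega)=\mu\}$ is discrete, hence locally finite, and only at such $x$ can the index $k_i$ change. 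I would organize the write-up as: (i) the level-set identification of $u'$; (ii) continuity of $\psi_{k,(x,\omega)}(\cdot)$ in $x$; (iii) local rigidity of the branch away from critical values; (iv) discreteness of the switch set from the no-cluster-point assumption; then assemble the $b_i$.
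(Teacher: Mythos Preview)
Your outline runs along the same track as the paper's: for each $x$ identify the branch index, argue the switching set is discrete, then collect the $b_i$. The one place where your write-up is genuinely incomplete is the claim that at a switching point ``the two branch values must agree, which means $\mu$ equals a common extreme value $M_i(x_0,\omega)$ or $m_i(x_0,\omega)$.'' This is only automatic when the two candidate branches are \emph{adjacent} (indices differ by $1$); when they differ by $2$ or more the values $\psi_{k_1,(x_0,\omega)}(\mu)$ and $\psi_{k_2,(x_0,\omega)}(\mu)$ are separated by an entire monotone branch and cannot coincide, so the no-cluster-point hypothesis does nothing for you there.

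The paper handles this as a separate case. If $u'$ oscillates along sequences $x_n,y_n\to x_0$ between branches $k_1\ge k_2+2$, pick any $p$ strictly between the two branch values. Then $u(x)-px$ has derivative of alternating sign along the two sequences, so it achieves local minima $z_n^-$ and local maxima $z_n^+$ with $z_n^\pm\to x_0$; the viscosity inequalities give $H(p,z_n^+,\omega)\le\mu\le H(p,z_n^-,\omega)$, hence by the intermediate value theorem $H(p,z_n,\omega)=\mu$ for some $z_n\to x_0$, and in the limit $H(p,x_0,\omega)=\mu$ for \emph{every} $p$ in an open interval --- contradicting the constrained (piecewise strictly monotone) structure. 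Your parenthetical about ``pinched at a local max/min of $u(x)-px$'' is exactly this mechanism, but you should make the case split explicit: oscillation between non-adjacent branches is excluded by the viscosity argument alone, and only the adjacent-branch case invokes the no-cluster-point assumption on $m_i,M_j$. With that split made precise, your argument and the paper's coincide.
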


\begin{proof}
Fix $\omega\in\Omega$ and omit the notation $\omega$.

\vspace{3mm}

STEP 1. \textbf{Claim}: for each $x\in\RR$, there exist $\delta_{x}>0$ and $l_{x},r_{x}\in\lbrace 1,2,\cdots,2L+1 \rbrace$, such that
\begin{eqnarray*}
u^{\prime}(y)=\begin{cases}
\psi_{l_{x},y}(\mu) & y\in(x-\delta_{x},x)\\
\psi_{r_{x},y}(\mu) & y\in(x,x+\delta_{x})\\
\end{cases}
\end{eqnarray*} 

Just prove the first equality, since the proof for the second one is similar. Suppose this is not true at some $x_{0}$, then there exist two sequences $x_{n}\rightarrow x_{0}^{-}$ and $y_{n}\rightarrow x_{0}^{-}$, $1\leq k_{2}<k_{1}\leq 2L+1$, such that
\begin{eqnarray*}
x_{1}<y_{1}<x_{2}<y_{2}<\cdots<x_{0} &
u^{\prime}(x_{n})=\psi_{k_{1},x_{n}}(\mu) &
u^{\prime}(y_{n})=\psi_{k_{2},y_{n}}(\mu)
\end{eqnarray*}

\textbf{Case 1:} $k_{1}\geq k_{2}+2.$ 
Then there is a branch between the $k_{1}$-th branch and the $k_{2}$-th branch. So there exist $a<b$, such that $u^{\prime}(x_{n})<a<b<u^{\prime}(y_{n})$.

Fix any $p\in [a,b]$, then $u(x)-px$ is decreasing(increasing) around $x_{n}(y_{n})$. So, $u(x)$ attains local minimum(maximum) at $z_{n}^{-}\in(x_{n},y_{n})(z_{n}^{+}\in(y_{n},x_{n+1}))$, then $H(p,z_{n}^{+})\leq \mu\leq H(p,z_{n}^{-})$, and thus there is $z_{n}\in[z_{n}^{-},z_{n}^{+}]$ with $H(p,z_{n})=\mu$. By the fact $\lim\limits_{n\rightarrow \infty}z_{n}=x_{0}$, we have $H(p,x_{0})=\mu.$ This is true for any $p\in[a,b]$ and this contradicts to the fact that $H(p,x,\omega)$ is constrained.

\vspace{3mm}

\textbf{Case 2:} $k_{1}=k_{2}+1,$ without loss of generality, let $k_{1}=2,k_{2}=1$. 

If $m_{1}(x_{0})<\mu$, by the similar argument used in \textbf{Case 1}, we get a contradiction.

If $m_{1}(x_{0})>\mu$, there is some $\delta>0$, s.t. $m_{1}(\cdot)|_{(x_{0}-\delta,x_{0})}>\mu$, let $x_{n}\in(x_{0}-\delta,x_{0})$, then $\mu=H(u^{\prime}(x_{n}),x_{n})\geq H(p_{1},x_{n})>\mu$, which is a contradiction.

If $m_{1}(x_{0})=\mu$, since $m_{1}(x)$ has no cluster point, there is some $\delta>0$ such that $\mu\notin \lbrace m_{1}(x)|x\in(x_{0}-\delta,x_{0})\rbrace$. By the above discussion, $m_{1}(\cdot)|_{(x_{0}-\delta,x_{0})}<\mu$. Let $\Phi(x):=u(x)-p_{1}x$, then $\Phi^{\prime}(x_{n})<0$ and $\Phi^{\prime}(y_{n})>0$, so there is some $z_{n}\in(x_{n},y_{n})$ where $\Phi(x)$ attains local minimum. So $m_{1}(z_{n})=H(p_{1},z_{n})\geq \mu$, since $z_{n}\in(x_{0}-\delta,x_{0})$ when $n\gg 1$, we get the contradiction.

Thus, the \textbf{Claim} is proved.\\

STEP 2. Denote:
$A:=\lbrace x\in\RR |l_{x}\ne r_{x} \rbrace$. By the above arguments, we see that $A$ has no cluster point. Then there is a sequence $\lbrace b_{i} \rbrace_{i\in\ZZ}$ such that $b_{i}<b_{i+1}$, $A\subset\lbrace b_{i} \rbrace_{i\in\ZZ}$ and $\lim\limits_{i\rightarrow \pm \infty}b_{i}=\pm \infty.$ We will have $r_{b_{i}}=l_{b_{i+1}}$. Thus
$u^{\prime}(x)=\psi_{r_{b_{i}},x}(\mu),  x\in(b_{i},b_{i+1})$

\end{proof}

\subsection{Homotopy between solutions}
Let $H(p,x,\omega)$ be constrained with index $(0,L)$, for simplicity of notation, we omit the dependence of $\omega$.
Let $f\in L^{\infty}(\RR)$ and any solution of $u^{\prime}(x)=f(x)$ is a viscosity solution to
\begin{eqnarray*}
\begin{cases}
	H(u^{\prime},x)=f(x)\\
	u^{\prime}\geq 0
\end{cases}
&& x\in\RR
\end{eqnarray*}

By Lemma\ref{decompsition lemma}, let $a_{1}<a_{2}<a_{3}$ and $f(x)|_{(a_{i},a_{i+1})}=\psi_{k_{i},x}(\mu)$, $k_{i}\in\lbrace 1,2,\cdots, 2L+1 \rbrace, i=1,2$. Denote $k=\min\lbrace k_{1},k_{2} \rbrace$ and  define
\begin{eqnarray*}
\widetilde{f}(x):=\begin{cases}
f(x) & x\in\RR\diagdown (a_{1},a_{3})\\
\psi_{k,x}(\mu) & x\in(a_{1},a_{3})\\
\end{cases}
\end{eqnarray*}

\begin{lemma}\label{combination lemma}
Assume $\mu\notin \lbrace m_{i}(x), M_{j}(x)|1\leq i,j\leq L, x\in(a_{1},a_{3}) \rbrace$. Then any solution of $u^{\prime}=\widetilde{f}$ is also a viscosity solution of
\begin{eqnarray*}
H(u^{\prime}(x),x)=\mu & x\in\RR
\end{eqnarray*}
\end{lemma}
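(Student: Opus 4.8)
The plan is to exploit the locality of the viscosity property and treat separately the three regions $\RR\setminus[a_1,a_3]$, the open interval $(a_1,a_3)$, and the two endpoints $a_1,a_3$. Let $u$ be the Lipschitz function with $u'=\widetilde f$ a.e., and let $v$ be the solution with $v'=f$, which is a viscosity solution of $H(v',x)=\mu$ with $v'\ge 0$; I normalize the additive constant so that $u\equiv v$ on $(-\infty,a_1]$. Since $\widetilde f\ge 0$ and $H(\widetilde f(x),x)=\mu$ for a.e.\ $x$, the viscosity sub- and super-inequalities are automatic at every point of differentiability of $u$, so it suffices to verify them at the (locally finite) set of jump points of $u'$, which is contained in $\{a_1,a_3\}$ together with the jump points of $f$ in $\RR\setminus[a_1,a_3]$.

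For $x_0\notin[a_1,a_3]$ there is a neighbourhood on which $\widetilde f\equiv f$, so $u$ differs from $v$ there by a constant and inherits the viscosity property from $v$. For $x_0\in(a_1,a_3)$ the crux is the claim that $x\mapsto\psi_{k,x}(\mu)$ is well defined and continuous on $(a_1,a_3)$. I would prove this by observing that the set of branches $j\in\{1,\dots,2L+1\}$ for which $\mu$ lies in the range of $\phi_{j,x}$ is locally constant in $x$ on $(a_1,a_3)$: the endpoints of each branch's range lie among $H(0,x)\le 0\le\mu$ and the values $m_i(x),M_i(x)$, so this set can change only where $\mu$ meets some $m_i(x)$ or $M_i(x)$, which is excluded by hypothesis on $(a_1,a_3)\ni a_2$. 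Since branch $k_1$ attains $\mu$ on $(a_1,a_2)$ and branch $k_2$ on $(a_2,a_3)$, local constancy forces both — hence the outer branch $k=\min\{k_1,k_2\}$ — to attain $\mu$ throughout $(a_1,a_3)$, and $\psi_{k,x}(\mu)$ is then the unique $\mu$-crossing of branch $k$, continuous in $x$ by continuity and strict monotonicity of $H$ along the branch. Consequently $u\in C^1((a_1,a_3))$ with $H(u',x)=\mu$, so $u$ is a classical, a fortiori viscosity, solution there.

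The essential case is $x_0\in\{a_1,a_3\}$; I describe $a_1$, the other endpoint being symmetric. Write $k_0$ for the branch $f$ uses just to the left of $a_1$. On the left of $a_1$ one has $u\equiv v$, so $u'(a_1^-)=\psi_{k_0,a_1}(\mu)$; on the right $u$ is $C^1$ with $u'(a_1^+)=\psi_{k,a_1}(\mu)$, and since $k=\min\{k_1,k_2\}$ selects the branch lying farthest to the right, $\psi_{k,a_1}(\mu)\ge\psi_{k_1,a_1}(\mu)=v'(a_1^+)$. Testing $u$ from above (resp.\ below) by a $C^1$ function at $a_1$ and comparing one-sided derivatives pins the test slope to the interval between $\psi_{k_0,a_1}(\mu)$ and $\psi_{k,a_1}(\mu)$ (or makes the test vacuous). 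When $k=k_1$ this interval is exactly the one already controlled by $v$'s viscosity inequality at $a_1$, so $u$ inherits the property. When $k=k_2<k_1$ the interval is enlarged by the segment between $\psi_{k_1,a_1}(\mu)$ and $\psi_{k_2,a_1}(\mu)$, on which one must still show that $H(\cdot,a_1)$ lies on the correct side of $\mu$.

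This last point is the main obstacle. The compatibility of $v$'s kink at $a_2$ gives $H(\cdot,a_2)\gtrless\mu$ on $[\psi_{k_1,a_2}(\mu),\psi_{k_2,a_2}(\mu)]$, with the sign fixed by whether $k_1<k_2$ or $k_1>k_2$, and I would transport this to $x=a_1$ (and $x=a_3$) along $[a_1,a_2]$ (resp.\ $[a_2,a_3]$): the set of wells $i$ with $p_i$ between $\psi_{k_1,x}(\mu)$ and $\psi_{k_2,x}(\mu)$ is again locally constant there, and for each such $i$ the continuous function $m_i(\cdot)-\mu$ (or $M_i(\cdot)-\mu$) has constant sign on $(a_1,a_3)$ by hypothesis, so the sign known at $a_2$ persists on the whole segment; combined with $H(\psi_{k_j,x}(\mu),x)=\mu$ this yields $H(\cdot,x)\gtrless\mu$ on $[\psi_{k_1,x}(\mu),\psi_{k_2,x}(\mu)]$ for all $x$ in the segment, in particular at the endpoint. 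With that, the viscosity sub- and super-inequalities at $a_1$ and $a_3$ follow and the proof is complete; everything else is routine bookkeeping once the local-constancy principle is in place.
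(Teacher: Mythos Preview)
Your argument is correct and follows the natural line; the paper itself gives no proof here, deferring to Lemma~A.3 of Armstrong--Tran--Yu~[\ref{1-d seperable noncovex by ATY}], and your write-up is precisely the kind of argument one finds there: reduce to checking the viscosity inequalities at the junction points, use that $\mu$ avoids the critical values on $(a_1,a_3)$ to get continuity of $\psi_{k,x}(\mu)$ and constancy of the set of admissible branches, and then transport the sign information contained in the viscosity condition at $a_2$ to the endpoints. One small sharpening: in your transport step you appeal to local constancy of the set of wells between $\psi_{k_1,x}(\mu)$ and $\psi_{k_2,x}(\mu)$, but in this constrained setting the branch endpoints $p_i,q_j$ are fixed in $p$, so that set is actually globally constant and the argument simplifies; moreover the viscosity condition at $a_2$ together with $\mu\neq m_i(a_2)$ forces the relevant local minima strictly above $\mu$, which is what makes the sign persist and also rules out intermediate branches attaining $\mu$ on $(a_1,a_3)$.
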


\begin{proof}
Similar to the proof of A.3 in [\ref{1-d seperable noncovex by ATY}].
\end{proof}

\vspace{3mm}

Let $I=(a,b)$, and $f_{1},f_{2}\in L^{\infty}(I)$, $f_{1}\geq f_{2}$. Assume solutions of
\begin{eqnarray*}
\begin{cases}
u_{1}^{\prime}=f_{1} & x\in I\\
u_{1}(a)=0 & \\
\end{cases}
&\text{ and }&\begin{cases}
u_{2}^{\prime}=f_{2} & x\in I\\
u_{2}(a)=0 & \\
\end{cases}
\end{eqnarray*}

are both viscosity solutions of the equation
\begin{eqnarray}\label{eqn for homotopy}
H(u^{\prime},x,\omega)=\mu && x\in I
\end{eqnarray}

Then $u_{2}(x)\leq u_{1}(x)\leq u_{2}(x)-u_{2}(b)+u_{1}(b)$. Fix any $c\in[u_{2}(b),u_{1}(b)]$ and define
\begin{eqnarray*}
u_{c,*}(x)&:=&\max \lbrace u_{2}(x),u_{1}(x)-u_{1}(b)+c \rbrace\\
u^{c,*}(x)&:=&\min \lbrace u_{1}(x),u_{2}(x)-u_{2}(b)+c \rbrace
\end{eqnarray*}

Define the set
\begin{eqnarray*}
\mathcal{W}:=\left\lbrace w\in  W^{1,\infty}(I)|H(w^{\prime},x,\omega)\leq \mu \text{ and }u_{c,*}(x)\leq w(x)\leq u^{c,*}(x) \right\rbrace
\end{eqnarray*}

And the function $w_{c}(x):=\sup\limits_{w\in \mathcal{W}} w(x)$. Denote
\begin{eqnarray*}
\mathcal{F}_{I}(f_{1},f_{2},c)(x):=\begin{cases}
w_{c}^{\prime}(x) & \text{ if } w_{c} \text{ is differentiable at } x\\
0 & \text{ otherwise }\\
\end{cases}
\end{eqnarray*}

Then $u_{c,*}(x)(u^{c,*}(x))$ is a viscosity sub(super)solution to equation (\ref{eqn for homotopy}). By Perron's method, $w_{c}(x)$ is a viscosity solution of the equation
\begin{eqnarray*}
\begin{cases}
H(w_{c}^{\prime}(x),x)=\mu \\
w_{c}(a)=0, w_{c}(b)=c
\end{cases}
&& x\in (a,b)
\end{eqnarray*}

\begin{lemma}\label{homotopy between sol}
Fix $a<b$, $0<\epsilon<\frac{b-a}{2}$, let $f_{1},f_{2}\in L^{\infty}(a-\epsilon,b+\epsilon)$ such that
\begin{eqnarray*}
f_{1}(x)\geq f_{2}(x), \  x\in(a-\epsilon,b+\epsilon) && f_{1}(x)=f_{2}(x),\  x\in(a-\epsilon,a)\bigcup(b,b+\epsilon)
\end{eqnarray*}
Suppose any solution of
$\begin{cases}
u_{i}^{\prime}(x)=f_{i}(x) & x\in(a-\epsilon,b+\epsilon)\\
u_{i}(a)=0
\end{cases} (i=1,2)$
is a viscosity (sub-)solution of the equation: $H(u^{\prime},x)=\mu$. Fix $c\in[u_{2}(b),u_{1}(b)]$, then any solution of the equation
\begin{eqnarray*}
v^{\prime}(x)=\begin{cases}
f_{1}(x)=f_{2}(x) & x\in(a-\epsilon,a)\bigcup(b,b+\epsilon)\\
\mathcal{F}_{I}(f_{1},f_{2},c)(x) & x\in I=(a,b)\\
\end{cases}
\end{eqnarray*}
is a viscosity (sub-)solution of the equation
\begin{eqnarray*}
\begin{cases}
H(u^{\prime}(x),x)=\mu & x\in (a-\epsilon,b+\epsilon)\\
u(b)=c & \\
\end{cases}
\end{eqnarray*} 
\end{lemma}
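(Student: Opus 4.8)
The plan is to write $v$ down explicitly, record a global two-sided bound for it, observe that the equation holds in the interior of each piece for free, and then obtain the viscosity inequalities at the two junction points $x=a,b$ by transferring test functions to the known (sub-/super)solutions $u_{1},u_{2}$. First I would integrate the ODE for $v$ with the normalization $v(b)=c$: this gives $v=u_{1}-u_{1}(b)+c=u_{2}-u_{2}(b)+c$ on $[b,b+\epsilon]$ (using $f_{1}=f_{2}$ there), $v\equiv w_{c}$ on $[a,b]$ since $v'=\mathcal{F}_{I}(f_{1},f_{2},c)=w_{c}'$ a.e.\ and $w_{c}(b)=c$ (in particular $v(a)=w_{c}(a)=0$), and $v=u_{1}=u_{2}$ on $[a-\epsilon,a]$ since there $f_{1}=f_{2}$ and $u_{i}(a)=0=v(a)$. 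Continuity of $v$ at $a$ and $b$ is precisely the identities $u_{c,*}(a)=u^{c,*}(a)=0$ and $u_{c,*}(b)=u^{c,*}(b)=c$, which is exactly where $c\in[u_{2}(b),u_{1}(b)]$ enters. Writing $\widetilde{u}_{1}:=u_{1}-u_{1}(b)+c$ and $\widetilde{u}_{2}:=u_{2}-u_{2}(b)+c$ (each a (sub-/super)solution of $H(w',x,\omega)=\mu$ on all of $(a-\epsilon,b+\epsilon)$, being $u_{1}$, resp.\ $u_{2}$, plus a constant), one then checks the global sandwich
\begin{equation*}
\max\{u_{2},\widetilde{u}_{1}\}\;\le\;v\;\le\;\min\{u_{1},\widetilde{u}_{2}\}\qquad\text{on }(a-\epsilon,b+\epsilon),
\end{equation*}
which reduces on $[a,b]$ to $u_{c,*}\le w_{c}\le u^{c,*}$ and on the two flanks to a direct computation from the formulas above, using $f_{1}\ge f_{2}$, $f_{1}=f_{2}$ on the flanks, and $c\in[u_{2}(b),u_{1}(b)]$.

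Next, the interior of each piece is immediate: on $(a-\epsilon,a)$ and $(b,b+\epsilon)$ one has $v=u_{1}$, resp.\ $v=\widetilde{u}_{1}$, hence a viscosity (sub-)solution of $H(w',x,\omega)=\mu$ by hypothesis; on $(a,b)$, $v=w_{c}$ is a viscosity (sub-)solution by the construction recalled just before the lemma ($w_{c}$ is the supremum of the subsolutions in $\mathcal{W}$, hence itself a subsolution, and a genuine solution when the $u_{i}$ are solutions, by Perron's method with the supersolution $u^{c,*}$). It therefore only remains to verify the viscosity (sub-/super)solution inequalities of $v$ at the two points $x=a$ and $x=b$.

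This is the heart of the argument, and it is short once the sandwich is available. Suppose $\phi\in C^{1}$ touches $v$ from above at $a$. By the sandwich, $v\ge u_{2}$ in a neighbourhood of $a$ with $v(a)=u_{2}(a)$, so $\phi$ also touches $u_{2}$ from above at the interior point $a$; since $u_{2}$ is a viscosity subsolution of $H(w',x,\omega)=\mu$ on $(a-\epsilon,b+\epsilon)$, this yields $H(\phi'(a),a,\omega)\le\mu$. Symmetrically, if $\phi$ touches $v$ from below at $a$, then $v\le u_{1}$ near $a$ with $v(a)=u_{1}(a)$ forces $\phi$ to touch $u_{1}$ from below at $a$, and (when $u_{1}$ is a supersolution) $H(\phi'(a),a,\omega)\ge\mu$. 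At $x=b$ the argument is identical with $\widetilde{u}_{1}$ in the role of $u_{2}$ (since $v\ge\widetilde{u}_{1}$ near $b$ and $v(b)=\widetilde{u}_{1}(b)=c$) and $\widetilde{u}_{2}$ in the role of $u_{1}$ (since $v\le\widetilde{u}_{2}$ near $b$ and $v(b)=\widetilde{u}_{2}(b)=c$). Together with the interior, $v$ satisfies the required viscosity inequality at every point of $(a-\epsilon,b+\epsilon)$ and has $v(b)=c$, which is the assertion; for the version with only subsolutions $u_{i}$ one keeps only the ``from above'' halves.

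The only real work I expect is bookkeeping: verifying the sandwich and the endpoint values $u_{c,*}(a)=u^{c,*}(a)=0$, $u_{c,*}(b)=u^{c,*}(b)=c$ carefully, and matching, in each of the four junction cases, the orientation of the test function (from above/below) with a subsolution/supersolution bound. The potentially delicate point — checking the Hamiltonian inequality directly at a genuine corner of $v$ at $a$ or $b$, where non-convexity of $H$ could a priori place the test slope on a critical value above $\mu$ — never has to be faced, precisely because the global sandwich transfers every such test function to one of $u_{1},u_{2},\widetilde{u}_{1},\widetilde{u}_{2}$, which are (sub-/super)solutions on the whole enlarged interval.
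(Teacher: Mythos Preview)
Your proof is correct and is exactly the standard sandwich-and-transfer argument: the global bound $\max\{u_{2},\widetilde{u}_{1}\}\le v\le\min\{u_{1},\widetilde{u}_{2}\}$ lets every test function at the junctions $a,b$ be passed to one of the four known (sub-/super)solutions, while the interior of each piece is handled by hypothesis or by the Perron construction of $w_{c}$. The paper does not give its own proof but simply cites Lemma~A.4 of Armstrong--Tran--Yu [\ref{1-d seperable noncovex by ATY}], whose argument is precisely the one you have written out; so your proposal both matches the intended approach and fills in the details the paper omits.
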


\begin{proof}
Same as the proof of Lemma A.4 in [\ref{1-d seperable noncovex by ATY}].
\end{proof}

\section{Homogenization of Hamiltonian with Large Oscillation}\label{Proof of Large Oscillation}

In this section, the Hamiltonian is assumed to satisfy \textbf{(A1)-(A3)}, be constrained(c.f. Definition\ref{constrained Hamiltonian}) with index $(0,L)$ and has large oscillation(c.f. Definition \ref{oscillation}).

\subsection{Admissible decomposition and admissible functions} Recall \ref{m}, \ref{M} and denote
\begin{eqnarray*}
\underline{m}:=\essinf\limits\limits_{(x,\omega)}m(x,\omega) & \overline{M}:=\esssup\limits\limits_{(x,\omega)}M(x,\omega) & \mathcal{P}=(\underline{m},\overline{M})\bigcap [0,\infty)
\end{eqnarray*}

\begin{definition}
Fix any $\mu\in\mathcal{P}$ and $\omega\in\Omega$, a collection of disjoint finite intervals $\lbrace I_{i} \rbrace_{i\in\ZZ}$ is called a $(\mu,\omega)$ admissible decomposition of $\RR$ if the following (1),(2) and (3) hold.

(1)$I_{i}=(a_{i},a_{i+1}), \bigcup_{i\in\ZZ}[a_{i},a_{i+1}]=\RR$

(2)$\mu\in\lbrace m_{j}(a_{i},\omega),M_{j}(a_{i},\omega),m_{j}(a_{i},\omega),M_{j}(a_{i},\omega)|1\leq j\leq L \rbrace$.

(3)$\mu\notin \lbrace m_{j}(x,\omega),M_{j}(x,\omega)|1\leq j\leq L, x\in(a_{i},b_{i}) \rbrace$.

\end{definition}

\begin{remark}\label{remark about the decomposition}
Since $H(p,x,\omega)$ is constrained and has large oscillation, such $\lbrace I_{i} \rbrace_{i\in\ZZ}$ exists and is unique. By \textbf{(A1)}, for any $y\in\RR$, $\lbrace I_{i}-y \rbrace_{i\in\ZZ}$ is the $(\mu,\tau_{y}\omega)$ admissible decomposition of $\RR$.
\end{remark}

\begin{definition}
For fixed $\omega\in\Omega$ and $\mu\in\mathcal{P}$, let $\lbrace I_{i} \rbrace_{i\in\ZZ}$ be a $(\mu,\omega)$ admissible decomposition of $\RR$, then $f:\RR\rightarrow \RR$ is a $(\mu,\omega)$ admissible function if following (1),(2) and (3) hold.

(1)$0\leq f(x)\leq \max\lbrace p\geq 0| H(p,x,\omega)\leq \overline{M} \rbrace$.

(2)For each $i\in\ZZ$, $f(x)|_{I_{i}}=\psi_{j_{i},x}(\mu), \text{ for some }j_{i}\in\lbrace 1,2,\cdots,2L+1 \rbrace$.

(3)Any solution of $u^{\prime}=f(x)$ is a viscosity solution of the equation 
\begin{eqnarray}\label{equation for admissible set}
\begin{cases}
	H(u^{\prime}(x),x,\omega)=\mu\\
	u^{\prime}\geq 0
\end{cases}
 && x\in\RR
\end{eqnarray}

\end{definition}

\vspace{3mm}

\begin{definition}
For $\mu\geq 0$ and $\omega\in\Omega$, define
\begin{eqnarray*}
\mathcal{A}_{\mu}(\omega):=\begin{cases}
\lbrace \text{All } (\mu,\omega)\text{ admissible functions } \rbrace & \mu\in\mathcal{P}\\
\psi_{2L+1,x}(\mu) & \mu\leq \underline{m}\geq 0\\
\psi_{1,x}(\mu) & \mu\geq \overline{M}\\
\end{cases}
\end{eqnarray*}
\end{definition}

\begin{lemma}
$\mathcal{A}_{\mu}(\omega)\ne \emptyset$.
\end{lemma}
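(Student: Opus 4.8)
For $\mu\notin\mathcal P$ there is nothing to do: by definition $\mathcal A_\mu(\omega)$ is then the one-element set $\{\psi_{2L+1,x}(\mu)\}$ (if $\mu\le\underline m$) or $\{\psi_{1,x}(\mu)\}$ (if $\mu\ge\overline M$), and the relevant inverse branch is defined at every $x$ because $0\le\mu\le\underline m\le m_L(x,\omega)$, respectively $\mu\ge\overline M\ge m_1(x,\omega)$, places $\mu$ inside the range of $\phi_{2L+1,(x,\omega)}$, respectively $\phi_{1,(x,\omega)}$ (here one uses Lemma \ref{for a.e. omega}, continuity of $H$ in $x$, and $m_j\le M_j$). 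So the real content is $\mu\in\mathcal P$, and for this I would proceed by the scheme \emph{solve, decompose, prune}: first exhibit a viscosity solution on all of $\RR$, then cut it into branch pieces, and finally glue those pieces inside each admissible interval until only one branch survives there.

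Concretely, I would apply Lemma \ref{existence lemma} to obtain a Lipschitz viscosity solution $u$ of $H(u',x,\omega)=\mu$, $u'\ge0$ on $\RR$, and then Lemma \ref{decompsition lemma} to obtain a locally finite sequence $\{b_i\}_{i\in\ZZ}$, $b_i\to\pm\infty$, with $u'|_{(b_i,b_{i+1})}=\psi_{k_i,x}(\mu)$ for branches $k_i\in\{1,\dots,2L+1\}$. Let $\{I_j\}$, $I_j=(a_j,a_{j+1})$, be the $(\mu,\omega)$ admissible decomposition, which exists and is unique by Remark \ref{remark about the decomposition}; throwing the points $a_j$ into $\{b_i\}$ (still a discrete set tending to $\pm\infty$) I may assume every $I_j$ is cut by finitely many $b_i$ into sub-intervals on each of which $u'$ is a single branch, and moreover on $I_j$ every occurring branch crosses level $\mu$ throughout $I_j$, since $\mu\notin\{m_l(x,\omega),M_l(x,\omega):1\le l\le L\}$ for $x\in I_j$ keeps the set of crossing branches constant on $I_j$. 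This last property is exactly the hypothesis of Lemma \ref{combination lemma}, so inside each $I_j$ I would merge adjacent sub-intervals in pairs, always retaining the smaller branch index, and after finitely many steps replace $u'|_{I_j}$ by $\psi_{k_j^{*},x}(\mu)$ with $k_j^{*}$ the least branch index occurring on $I_j$ — each merge returning, by Lemma \ref{combination lemma}, a function $g$ for which every solution of $v'=g$ is a viscosity solution of $H(v',x,\omega)=\mu$ on all of $\RR$. Performing this in every $I_j$ (the modifications in distinct intervals being disjoint) yields $f\colon\RR\to\RR$ with $f|_{I_j}=\psi_{k_j^{*},x}(\mu)$; on the countable set $\{a_j\}$ I would simply set $f:=0$.

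It then remains to verify the three conditions defining a $(\mu,\omega)$ admissible function. Condition (2) holds by construction; since every branch of a constrained Hamiltonian with index $(0,L)$ takes values in $[0,\infty)$ we have $f\ge0$ everywhere, and at $x\notin\{a_j\}$ the identity $H(f(x),x,\omega)=\mu<\overline M$ gives $f(x)\le\max\{p\ge0:H(p,x,\omega)\le\overline M\}$ (a finite quantity by \textbf{(A2)}), while at $x\in\{a_j\}$ the inequality reduces to $0\le\max\{p\ge0:H(p,x,\omega)\le\overline M\}$, true because $H(0,x,\omega)\le0\le\overline M$ — this is condition (1). For condition (3) I would argue by locality of the viscosity-solution property: near any point $x_0$ the function $f$ agrees with the output of only finitely many applications of Lemma \ref{combination lemma} (those performed in the at most two admissible intervals meeting a neighborhood of $x_0$), each of which is a function whose primitives solve $H(v',x,\omega)=\mu$ on $\RR$ in the viscosity sense, and since $f$ is locally equal to a branch $\psi_{k,x}(\mu)\ge0$ these primitives also satisfy $v'\ge0$; hence every solution of $v'=f$ is a viscosity solution of (\ref{equation for admissible set}), so $f\in\mathcal A_\mu(\omega)$. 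I expect this last step to be the main obstacle — promoting the single-pruning statement Lemma \ref{combination lemma} to a statement about the $f$ obtained after infinitely many prunings — the point being that $\{b_i\}$ is locally finite, so only finitely many prunings matter near each point, and that the prunings which reach an endpoint $a_j$ of an admissible interval cause no harm there because Lemma \ref{combination lemma} only requires $\mu\notin\{m_l,M_l\}$ on the \emph{open} interval $(a_1,a_3)$ while its conclusion holds on all of $\RR$.
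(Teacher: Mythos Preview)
Your proposal is correct and follows essentially the same route as the paper: invoke Lemma~\ref{existence lemma} for a solution, Lemma~\ref{decompsition lemma} to break it into branches, refine against the $(\mu,\omega)$ admissible decomposition, and on each $I_j$ replace by the single branch of minimal index via Lemma~\ref{combination lemma}. The paper compresses all of this into a few lines---it defines $s(j)=\min\{k_i:(b_i,b_{i+1})\subset I_j\}$, sets $f|_{I_j}=\psi_{s(j),x}(\mu)$, and simply cites Lemma~\ref{combination lemma} once---whereas you spell out the iterated pairwise merging and the locality argument at the endpoints $a_j$; but the substance is identical.
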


\begin{proof}
Fix $\omega\in\Omega$, by Lemma \ref{existence lemma}, there is a viscosity solution $u(x)$ of the equation (\ref{equation for admissible set}).

By Lemma \ref{decompsition lemma}, there is a strictly increasing sequence $\lbrace b_{i} \rbrace_{i\in\ZZ}$ such that
\begin{eqnarray*}
\lim\limits_{i\rightarrow\pm\infty}b_{i}=\pm\infty;\  u\in C^{1}((b_{i},b_{i+1})),  i\in\ZZ;\ u^{\prime}(x)|_{(b_{i},b_{i+1})}=\psi_{k_{i},x}(\mu),  k_{i}\in \lbrace 1,2,\cdots, 2L+1 \rbrace
\end{eqnarray*}

Let $\mu\in\mathcal{P}$ and $\lbrace I_{j} \rbrace_{j\in\ZZ}$ be the $(\mu,\omega)$ admissible decomposition of $\RR$. By refinement, we may assume that for $i\in\ZZ$, $(b_{i},b_{i+1})\subset I_{l_{i}}, l_{i}\in\ZZ$. For each $j\in\ZZ$, denote: $s(j)=\min\lbrace k_{i}|(b_{i},b_{i+1})\subset I_{j} \rbrace$. And define $f(x,\omega):=\psi_{s(j),x}(\mu), x\in I_{j}=(a_{j},a_{j+1})$. By Lemma \ref{combination lemma}, any solution to $u^{\prime}=f$ is a viscosity solution of the equation (\ref{equation for admissible set}).

Thus $f\in\mathcal{A}_{\mu}(\omega)$. If $\mu\notin \mathcal{P}$, it is clear that $\mathcal{A}_{\mu}(\omega)\ne \emptyset$.

\end{proof}

\begin{definition}
For each $\omega\in\Omega$ and $\mu\geq 0$, denote
\begin{eqnarray*}
\overline{f}_{\mu}(x,\omega)=\sup\lbrace f(x)|f\in \mathcal{A}_{\mu}(\omega) \rbrace &&
\underline{f}_{\mu}(x,\omega)=\inf\lbrace f(x)|f\in \mathcal{A}_{\mu}(\omega) \rbrace
\end{eqnarray*}
\end{definition}

\begin{lemma}\label{inf and sup of admissible functions}
$(1)$ For any $\mu\geq 0$ and $\omega\in\Omega$, $\overline{f}_{\mu}(x,\omega)$, $\underline{f}_{\mu}(x,\omega)\in\mathcal{A}_{\mu}(\omega).$

$(2)$$\overline{f}_{\mu}(x,\omega)\geq \underline{f}_{\mu}(x,\omega)$ and both of them are stationary.
\end{lemma}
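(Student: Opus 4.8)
The plan is to handle assertion (2) quickly and to concentrate on (1), the non-trivial half. For (2), the inequality $\overline{f}_{\mu}(x,\omega)\ge\underline{f}_{\mu}(x,\omega)$ is immediate once $\mathcal{A}_{\mu}(\omega)\neq\emptyset$, which is the preceding lemma when $\mu\in\mathcal{P}$ and obvious when $\mu\notin\mathcal{P}$ (there $\mathcal{A}_{\mu}(\omega)$ is a single explicit function). For stationarity I would show that $f\mapsto f(\cdot+y)$ is a bijection of $\mathcal{A}_{\mu}(\omega)$ onto $\mathcal{A}_{\mu}(\tau_{y}\omega)$ for every $y\in\RR$: by Remark \ref{remark about the decomposition} the $(\mu,\tau_{y}\omega)$ admissible decomposition is the $y$-translate of the $(\mu,\omega)$ one; by the stationarity in \textbf{(A1)} the monotone branches of $H(\cdot,x,\tau_{y}\omega)$ coincide with those of $H(\cdot,x+y,\omega)$, hence so do their inverse functions, so condition (2) in the definition of admissibility is translation-covariant; and $u\mapsto u(\cdot+y)$ sends viscosity solutions of $H(u',\cdot,\omega)=\mu$ to viscosity solutions of $H(u',\cdot,\tau_{y}\omega)=\mu$, so condition (3) is preserved as well (condition (1) clearly is). Taking pointwise suprema and infima then gives $\overline{f}_{\mu}(x,\tau_{y}\omega)=\overline{f}_{\mu}(x+y,\omega)$ and $\underline{f}_{\mu}(x,\tau_{y}\omega)=\underline{f}_{\mu}(x+y,\omega)$, which is exactly stationarity; joint measurability in $(x,\omega)$ follows from the piecewise-branch description obtained below.

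For (1), fix $\mu\in\mathcal{P}$ (the case $\mu\notin\mathcal{P}$ being trivial) and the admissible decomposition $\{I_{i}\}_{i\in\ZZ}$, $I_{i}=(a_{i},a_{i+1})$, whose endpoints form a locally finite set since the $m_{j},M_{j}$ have no cluster point. On $I_{i}$ every $f\in\mathcal{A}_{\mu}(\omega)$ equals $\psi_{j,x}(\mu)$ for a branch index $j$ lying in a fixed non-empty finite set $S_{i}\subset\{1,\dots,2L+1\}$, and since $j\mapsto\psi_{j,x}(\mu)$ is decreasing we obtain, for each $x\in I_{i}$, $\overline{f}_{\mu}(x,\omega)=\psi_{J_{i},x}(\mu)$ with $J_{i}:=\min S_{i}$ and $\underline{f}_{\mu}(x,\omega)=\psi_{\widetilde{J}_{i},x}(\mu)$ with $\widetilde{J}_{i}:=\max S_{i}$. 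Hence both functions satisfy conditions (1) and (2) of admissibility, are continuous on each $I_{i}$, and are in fact classical solutions of $H(\cdot,x,\omega)=\mu$ there (no critical value of $H$ is attained on $I_{i}$); so only condition (3), the viscosity property, is left to check, and only at the isolated points $a_{i}$.

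The crux is this check at $a_{i}$. Writing $v^{\pm}$ for the one-sided limits of $\overline{f}_{\mu}$ at $a_{i}$ (they exist by continuity of $H$ together with the no-cluster-point hypothesis), recall the kink dichotomy: if $v^{-}>v^{+}$ (concave kink) the supersolution test is vacuous and the subsolution test requires $H(v,a_{i},\omega)\le\mu$ for all $v\in[v^{+},v^{-}]$; if $v^{-}<v^{+}$ (convex kink) the subsolution test is vacuous and the supersolution test requires $H(v,a_{i},\omega)\ge\mu$ for all $v\in[v^{-},v^{+}]$; if $v^{-}=v^{+}$ the function is $C^{1}$ at $a_{i}$ with $H(v^{\pm},a_{i},\omega)=\mu$ and nothing more is needed. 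In the concave case choose $f\in\mathcal{A}_{\mu}(\omega)$ realizing the branch $J_{i-1}$ on $I_{i-1}$; since its branch on $I_{i}$ has index $\ge J_{i}$, its right limit at $a_{i}$ is $\le v^{+}$, so $f$ itself has a (wider) concave kink at $a_{i}$, and being already a viscosity solution it delivers $H(\cdot,a_{i},\omega)\le\mu$ on $[f(a_{i}^{+}),v^{-}]\supseteq[v^{+},v^{-}]$. The convex case is symmetric, using instead an $f\in\mathcal{A}_{\mu}(\omega)$ realizing $J_{i}$ on $I_{i}$, whose left limit at $a_{i}$ is $\le v^{-}$. This proves $\overline{f}_{\mu}\in\mathcal{A}_{\mu}(\omega)$; the argument for $\underline{f}_{\mu}$ is the mirror image, with ``minimal index / maximal slope'' replaced by ``maximal index / minimal slope'' and the roles of $I_{i-1}$ and $I_{i}$ exchanged.

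I expect the one genuinely delicate point to be the bookkeeping at the endpoints $a_{i}$, where $\mu$ is by construction a critical value of some branch of $H$: one must work consistently with the one-sided limits of the $\psi_{j,x}(\mu)$, be sure that the ``dominating'' admissible function invoked at each kink really exists (it does, because $S_{i}$ is a non-empty finite set), and identify correctly which of the two viscosity inequalities is the live one at a convex versus a concave kink. Everything else — conditions (1) and (2), stationarity, the reduction to the points $a_{i}$, and the degenerate ranges $\mu\notin\mathcal{P}$ — is routine.
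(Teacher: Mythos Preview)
Your proposal is correct and follows essentially the same route as the paper: identify that $\overline{f}_{\mu}$ (resp.\ $\underline{f}_{\mu}$) is a single branch $\psi_{J_{i},x}(\mu)$ on each $I_{i}$, and at each junction point verify the relevant viscosity inequality by borrowing it from an admissible function that realizes the extremal branch on one side and therefore has a wider kink. The paper phrases the same argument pointwise at an arbitrary $x_{0}$ (with left/right realizers $f_{l},f_{r}$ and the inclusion $[\overline{f}(x_{0}^{+}),\overline{f}(x_{0}^{-})]\subset[f_{l}(x_{0}^{+}),f_{l}(x_{0}^{-})]$ in the concave case), but the content is identical to your endpoint check, and the stationarity argument in (2) is the same bijection $\mathcal{A}_{\mu}(\omega)\to\mathcal{A}_{\mu}(\tau_{y}\omega)$ via translation.
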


\begin{proof}

(1)Fix any $\mu\geq 0$ and $\omega\in\Omega$.  For any point $x_{0}\in\RR$, since $H(p,x,\omega)$ is constrained with index $(0,L)$, there are $f_{r}\in\mathcal{A}_{\mu}(\omega)$, $\delta_{r}>0$ and $k_{r}\in\lbrace 1,2,\cdots, 2L+1 \rbrace$, such that
\begin{eqnarray*}
\overline{f}_{\mu}(x,\omega)|_{(x_{0},x_{0}+\delta_{r})}=f_{r}(x)|_{(x_{0},x_{0}+\delta_{r})}=\psi_{k_{r},x}(\mu)
\end{eqnarray*}

Similarly, there are $f_{l}\in\mathcal{A}_{\mu}(\omega)$, $\delta_{l}>0$ and $k_{r}\in\lbrace 1,2,\cdots, 2L+1 \rbrace$, such that
\begin{eqnarray*}
\overline{f}_{\mu}(x,\omega)|_{(x_{0}-\delta_{l},x_{0})}=f_{r}(x)|_{(x_{0}-\delta_{l},x_{0})}=\psi_{k_{l},x}(\mu)
\end{eqnarray*}

(i)If $k_{l}=k_{r}=k$. Then $\psi_{k,x}(\mu)$ is continuous on $(x_{0}-\delta_{l},x_{0}+\delta_{r})$. Since $H(\psi_{k,x}(\mu),x,\omega)=\mu$, any solution of $u^{\prime}=\psi_{k,x}(\mu)$ is the solution of the equation: $H(u^{\prime},x,\omega)=\mu, x\in(x_{0}-\delta_{l},x_{0}+\delta_{r})$

(ii)If $k_{l}<k_{r}$. It suffices to check any solution to $u^{\prime}=\overline{f}_{\mu}$ is a viscosity sub-solution at $x_{0}$. This follows from the following fact: $[\overline{f}(x_{0}^{+}),\overline{f}(x_{0}^{-})]=[f_{r}(x_{0}^{+}),f_{l}(x_{0}^{-})]\subset[f_{l}(x_{0}^{+}),f_{l}(x_{0}^{-})]$.

(iii)If $k_{l}>k_{r}$. It suffices to check any solution to $u^{\prime}=\overline{f}_{\mu}$ is a viscosity super-solution at $x_{0}$. This follows from the following fact: $[\overline{f}(x_{0}^{-}),\overline{f}(x_{0}^{+})]=[f_{l}(x_{0}^{-}),f_{r}(x_{0}^{+})]\subset[f_{r}(x_{0}^{-}),f_{r}(x_{0}^{+})]$.

So $\overline{f}_{\mu}(x,\omega)\in\mathcal{A}_{\mu}(\omega).$ Similarly, $\underline{f}_{\mu}(x,\omega)\in\mathcal{A}_{\mu}(\omega).$

(2)By definition, $\overline{f}_{\mu}(\cdot,\omega)\geq \underline{f}_{\mu}(\cdot,\omega)$. By Remark \ref{remark about the decomposition}, for any $y\in\RR$,
\begin{eqnarray*}
\overline{f}(x,\tau_{y}\omega)=\sup\lbrace f(x)|f(x)\in\mathcal{A}_{\mu}(\tau_{y}\omega)\rbrace
= \sup\lbrace f(x)|f(x-y)\in\mathcal{A}_{\mu}(\omega)\rbrace
= \overline{f}(x+y,\omega)
\end{eqnarray*}

Similarly, $\underline{f}(x,\tau_{y}\omega)=\underline{f}(x+y,\omega)$ for any $y\in\RR$.
\end{proof}

\subsection{Intermediate level set of the effective Hamiltonian}

\begin{lemma}\label{lemma of muu>inff M}
Let $H(p,x,\omega)$ satisfy \textbf{(A1)-(A3)} and be constrained with index $(0,L)$. If $\mu>\underline{M}$, then for a.e. $\omega\in\Omega$, the following is true: for any  $f(x)\in\mathcal{A}_{\mu}(\omega)$, there is a sequence of intervals $\lbrace J_{k} \rbrace_{k\in\ZZ}$ such that
\begin{eqnarray*}
J_{k}=(c_{k},c_{k+1}),\ \bigcup\limits_{k\in\ZZ}[c_{k},c_{k+1}]=\RR,\ \lim_{k\rightarrow\pm\infty}c_{k}=\pm\infty,\  f|_{J_{2k}}=\psi_{1,(x,\omega)}(\mu)
\end{eqnarray*}
\end{lemma}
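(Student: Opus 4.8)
The plan is to isolate a purely local structural fact and combine it with an ergodic‑theorem argument. The local fact: at any point $x$ with $M(x,\omega)<\mu$, every admissible function $f\in\mathcal{A}_{\mu}(\omega)$ satisfies $f(x)=\psi_{1,(x,\omega)}(\mu)$. Indeed, at such an $x$ each local maximum obeys $M_{j}(x,\omega)\le M(x,\omega)<\mu$, so the range of every monotone branch of $H(\cdot,x,\omega)$ with $p\ge 0$ other than the unbounded branch $\phi_{1,(x,\omega)}$ on $[p_{1},\infty)$ --- these ranges being $[m_{j}(x,\omega),M_{j}(x,\omega)]$ for the decreasing branches, $[m_{j+1}(x,\omega),M_{j}(x,\omega)]$ for the interior increasing branches, and $[H(0,x,\omega),M_{L}(x,\omega)]$ for $\phi_{2L+1,(x,\omega)}$ --- has its top strictly below $\mu$. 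Hence among $p\ge 0$ the equation $H(p,x,\omega)=\mu$ has the unique solution $\psi_{1,(x,\omega)}(\mu)$, which exceeds $p_{1}$ since $m_{1}(x,\omega)\le M_{1}(x,\omega)\le M(x,\omega)<\mu$. Because $f\ge 0$ (which rules out the steep branch on $p<0$) and $f$ solves $H(f,x,\omega)=\mu$, this pins down $f(x)$. By Lemma \ref{decompsition lemma}, applied to a primitive of $f$ (a viscosity solution of $H(u',x,\omega)=\mu$, $u'\ge 0$), $f$ is a locally finite selection of monotone branches, so $f$ equals $\psi_{1,(x,\omega)}(\mu)$ throughout the whole interval of that decomposition containing $x$, not just at $x$.

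For the probabilistic input, note that the breakpoints $q_{1},\dots,q_{L}$ of a constrained Hamiltonian with index $(0,L)$ are fixed, so $M(x,\omega)=\max_{1\le j\le L}H(q_{j},x,\omega)$ is stationary and continuous in $x$. By Lemma \ref{for a.e. omega}, for a.e.\ $\omega$ one has $\essinf_{x}M(x,\omega)=\underline{M}<\mu$, so $\{x:M(x,\omega)<\mu\}$ is a nonempty open set for a.e.\ $\omega$; were $\mathbf{P}[M(0,\omega)<\mu]=0$, stationarity would give $\mathbf{P}[M(y,\omega)<\mu]=0$ for every $y$, and Fubini would then render $\{x:M(x,\omega)<\mu\}$ Lebesgue‑null, hence empty, for a.e.\ $\omega$, a contradiction. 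Thus $\tau:=\mathbf{P}[M(0,\omega)<\mu]>0$, and the ergodic theorem for the action $\{\tau_{y}\}$ applied to $\CCChi_{\{M(0,\cdot)<\mu\}}$ yields, for a.e.\ $\omega$,
\begin{eqnarray*}
\lim_{T\to\infty}\frac{1}{2T}\int_{-T}^{T}\CCChi_{\{M(\cdot,\omega)<\mu\}}(s)\,ds=\tau>0 ,
\end{eqnarray*}
so that $\{x:M(x,\omega)<\mu\}$ is unbounded both above and below for a.e.\ $\omega$.

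To finish, fix such an $\omega$ and any $f\in\mathcal{A}_{\mu}(\omega)$. By the first two steps the set $S:=\{x:f(x)=\psi_{1,(x,\omega)}(\mu)\}$ contains $\{x:M(x,\omega)<\mu\}$, hence is unbounded in both directions, and by Lemma \ref{decompsition lemma} it is a locally finite union of closed intervals. Take its maximal intervals, in increasing order, to be the $J_{2k}=(c_{2k},c_{2k+1})$ and the complementary intervals to be the $J_{2k+1}=(c_{2k+1},c_{2k+2})$; should some maximal interval of $S$ be a half‑line, subdivide it into unit intervals, which is harmless since the conclusion only constrains the even‑indexed $J_{k}$. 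This produces $\{J_{k}\}_{k\in\ZZ}$ with $\bigcup_{k}[c_{k},c_{k+1}]=\RR$, $c_{k}\to\pm\infty$, and $f|_{J_{2k}}=\psi_{1,(x,\omega)}(\mu)$. I expect the main obstacle to be the local branch‑range analysis of the first step --- pinning down exactly when the value $\mu$ is blocked by a local maximum and using $f\ge 0$ to exclude the $p<0$ branch; the ergodic part is routine once $\mathbf{P}[M(0,\omega)<\mu]>0$ is verified, and the assembly of the intervals is merely combinatorial.
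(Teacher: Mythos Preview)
Your proof is correct and follows essentially the same approach as the paper: identify the open set $\{x:M(x,\omega)<\mu\}$ on which any admissible $f$ is forced onto the first branch by the branch--range analysis, use the ergodic theorem to make this set unbounded in both directions, and carve the $J_{2k}$ out of it. The paper works with the slightly smaller set $\{M(\cdot,\omega)<\underline{M}+\tfrac12(\mu-\underline{M})\}$ and extracts the intervals directly from continuity of $M$ rather than via Lemma~\ref{decompsition lemma}, but the substance is the same; your Fubini justification of $\mathbf{P}[M(0,\omega)<\mu]>0$ and your handling of possible half-lines in $S$ are simply more explicit than the paper's presentation.
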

\begin{proof}

By Lemma \ref{for a.e. omega}, for a.e. $\omega\in\Omega$, $\underline{M}=\essinf\limits\limits_{x\in\RR}M(x,\omega)$. Denote $\delta:=\mu-\underline{M}$ and $\epsilon:=\frac{\delta}{2}$. By ergodic theorem,
\begin{eqnarray*}
\lim_{L\rightarrow\pm\infty}\frac{1}{L}\int_{0}^{L}\CCChi_{\lbrace z,M(z,\omega)<\underline{M}+\epsilon \rbrace}(x)dx=\mathbf{E}\left[M(0,\cdot)<\underline{M}+\epsilon\right]>0 && \text{ a.e. } \omega\in\Omega
\end{eqnarray*}

So, almost surely, there is a sequence $x_{i}=x_{i}(\omega)$, such that $\lim\limits_{i\rightarrow\pm\infty}x_{i}=\pm\infty, M(x_{i},\omega)<\underline{M}+\epsilon$. By continuity of $M(x,\omega)$ in $x$, for each $i$, there is $\delta_{i}>0$, such that: $M(x,\omega)<\underline{M}+\epsilon,  x\in(x_{i}-\delta_{i},x_{i}+\delta_{i})$

Denote $c_{2k}:=x_{k}-\delta_{k},c_{2k+1}:=x_{k}+\delta_{k}, J_{k}:=(c_{k},c_{k+1})$. Then $f(x)|_{J_{2k}}=\psi_{1,(x,\omega)}(\mu)$ follows from the fact that: $H(f(x),x,\omega)=\mu>\underline{M}+\epsilon>M(x,\omega)|_{J_{2k}}, \text{ a.e. } \omega\in\Omega$.

\end{proof}

\begin{lemma}\label{lemma of muu>supp m}
Let $H(p,x,\omega)$ satisfy \textbf{(A1)-(A3)} and be constrained with index $(0,L)$. If $0\leq\mu<\overline{m}$, then for a.e. $\omega\in\Omega$, the following is true: for any $f(x)\in\mathcal{A}_{\mu}(\omega)$, there is a sequence of intervals $\lbrace J_{k} \rbrace_{k\in\ZZ}$ such that
\begin{eqnarray*}
J_{k}=(c_{k},c_{k+1}),\ \bigcup\limits_{k\in\ZZ}[c_{k},c_{k+1}]=\RR,\ \lim_{k\rightarrow\pm\infty}c_{k}=\pm\infty,\  f|_{J_{2k}}=\psi_{2L+1,x}(\mu)
\end{eqnarray*}
\end{lemma}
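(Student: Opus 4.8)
The plan is to follow the proof of Lemma~\ref{lemma of muu>inff M} almost verbatim, with the outermost increasing branch $\phi_{1}$ replaced by the innermost increasing branch $\phi_{2L+1}$, and the extremal value $\underline{M}$ replaced by $\overline{m}$. First I would fix $\mu$ with $0\leq\mu<\overline{m}$ and set $\delta:=\overline{m}-\mu>0$, $\epsilon:=\frac{\delta}{2}$. By Lemma~\ref{for a.e. omega}, for a.e. $\omega$ one has $\overline{m}=\esssup\limits_{x\in\RR}m(x,\omega)$; and since $\overline{m}$ is the essential supremum of $m(x,\omega)$ over $(x,\omega)$, the set $\lbrace(x,\omega):m(x,\omega)>\overline{m}-\epsilon\rbrace$ has positive measure, whence stationarity of $m$ forces $\mathbf{E}[m(0,\cdot)>\overline{m}-\epsilon]>0$. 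The Birkhoff ergodic theorem then yields, for a.e. $\omega$,
\begin{eqnarray*}
\lim_{L\rightarrow\pm\infty}\frac{1}{L}\int_{0}^{L}\CCChi_{\lbrace z,\,m(z,\omega)>\overline{m}-\epsilon\rbrace}(x)dx=\mathbf{E}[m(0,\cdot)>\overline{m}-\epsilon]>0,
\end{eqnarray*}
so almost surely there is a strictly increasing sequence $x_{i}=x_{i}(\omega)$ with $\lim_{i\rightarrow\pm\infty}x_{i}=\pm\infty$ and $m(x_{i},\omega)>\overline{m}-\epsilon=\mu+\epsilon$. Using continuity of $m(\cdot,\omega)$ from~\textbf{(A3)}, I would then pick $\delta_{i}>0$ small enough that the intervals $(x_{i}-\delta_{i},x_{i}+\delta_{i})$ are pairwise disjoint, listed in increasing order, and satisfy $m(\cdot,\omega)>\mu+\epsilon$ on each; setting $c_{2k}:=x_{k}-\delta_{k}$, $c_{2k+1}:=x_{k}+\delta_{k}$ and $J_{k}:=(c_{k},c_{k+1})$ produces the asserted decomposition of $\RR$, with the $J_{2k}$ the ``good'' intervals.

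The core of the argument is the claim $f|_{J_{2k}}=\psi_{2L+1,x}(\mu)$ for every $f\in\mathcal{A}_{\mu}(\omega)$, which is a branch count. Fix $x\in J_{2k}$, so that $m_{j}(x,\omega)\geq m(x,\omega)>\mu$ for each $1\leq j\leq L$. Since $H(\cdot,x,\omega)$ tends to $+\infty$ as $p\rightarrow+\infty$ and is monotone on each piece of $[q_{L},\infty)$, its minimum there equals $\min_{1\leq j\leq L}m_{j}(x,\omega)=m(x,\omega)$ (attained at one of the local minima $p_{1},\dots,p_{L}$); hence $H(p,x,\omega)>\mu$ for every $p\geq q_{L}$. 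On the innermost increasing branch, i.e. the restriction of $H(\cdot,x,\omega)$ to $[0,q_{L}]$, one has $H(0,x,\omega)\leq 0\leq\mu$ (using $\esssup\limits_{(x,\omega)}H(0,x,\omega)=0$ together with continuity in $x$, valid for a.e. $\omega$) and $H(q_{L},x,\omega)=M_{L}(x,\omega)\geq m_{L}(x,\omega)>\mu$, so $\mu$ lies in the range of this branch. Thus $\psi_{2L+1,x}(\mu)\in[0,q_{L})$ is the unique $p\geq 0$ with $H(p,x,\omega)=\mu$. Since any $f\in\mathcal{A}_{\mu}(\omega)$ satisfies $f(x)\geq 0$ and $H(f(x),x,\omega)=\mu$, this forces $f(x)=\psi_{2L+1,x}(\mu)$ throughout $J_{2k}$.

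I do not expect a genuine obstacle: this statement is the exact mirror image of Lemma~\ref{lemma of muu>inff M} under the reflection exchanging the two ends of the chain of monotone branches, and all needed ingredients (Lemma~\ref{for a.e. omega}, Birkhoff's theorem, the monotonicity structure of a constrained Hamiltonian) are already available. The two points deserving care are the extraction of the positive--density sequence $\lbrace x_{i}\rbrace$ from the fact that $\overline{m}$ is only an \emph{essential} supremum — handled exactly as in Lemma~\ref{lemma of muu>inff M} — and the observation that $H(\cdot,x,\omega)>\mu$ on all of $[q_{L},\infty)$ for $x$ in a good interval, which is what rules out every branch other than $\phi_{2L+1}$ once admissibility of $f$ (in particular $f\geq 0$) is invoked.
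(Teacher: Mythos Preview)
Your proposal is correct and is exactly the approach the paper intends: the paper's own proof of this lemma consists of the single line ``Similar argument as Lemma~\ref{lemma of muu>inff M},'' and you have carried out precisely that mirror argument, with $\underline{M}\leftrightarrow\overline{m}$ and $\psi_{1}\leftrightarrow\psi_{2L+1}$. Your branch-count justification (that $m(x,\omega)>\mu$ forces $H(\cdot,x,\omega)>\mu$ on all of $[q_{L},\infty)$, leaving only the innermost increasing branch) is in fact more explicit than what the paper writes for the companion lemma.
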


\begin{proof}
Similar argument as Lemma \ref{lemma of muu>inff M}.
\end{proof}

\begin{lemma}\label{level piece}
Let $H(p,x,\omega)$ satisfy \textbf{(A1)-(A3)} and be constrained with index $(0,L)$. Fix any $\mu\geq 0$ and $p\in[\int_{\Omega}\underline{f}_{\mu}(0,\omega)d\omega,\int_{\Omega}\overline{f}_{\mu}(0,\omega)d\omega]$, there is a stationary function $f(x,\omega):\RR\times\Omega\rightarrow \RR$ such that

(1)$p=\int_{\Omega}f(0,\omega)d\omega$.

(2)For a.e. $\omega\in\Omega$, any solution to $u^{\prime}=f(x,\omega)$ is a solution of the equation: $H(u^{\prime},x,\omega)=\mu$.
\end{lemma}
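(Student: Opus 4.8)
The plan is to build a one‑parameter family $\{f_\theta\}_{\theta\in[0,1]}$ of stationary admissible functions that interpolates between $\underline f_\mu$ and $\overline f_\mu$, and then to realize the prescribed value $p$ by the intermediate value theorem applied to $\theta\mapsto \mathbf E[f_\theta(0,\cdot)]$. First one reduces to the interesting range: if $\mu\le\underline m$ or $\mu\ge\overline M$ then $\mathcal A_\mu(\omega)$ is the single stationary branch $\psi_{2L+1,(x,\omega)}(\mu)$, respectively $\psi_{1,(x,\omega)}(\mu)$, so $\underline f_\mu=\overline f_\mu$, $p$ is forced, and we are done. Assume then $\mu\in\mathcal P$. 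Here the large‑oscillation hypothesis (Definition~\ref{oscillation}) enters essentially: since $\mathcal P=(\underline m,\overline M)\cap[0,\infty)$ and $\underline M<\overline m$, the interval $[\overline m,\underline M]$ is empty, so every $\mu\in\mathcal P$ satisfies $\mu>\underline M$ or $\mu<\overline m$. In the first case Lemma~\ref{lemma of muu>inff M}, applied to both $\overline f_\mu$ and $\underline f_\mu$, provides for a.e.\ $\omega$ a bi‑infinite sequence of intervals on which $\overline f_\mu=\underline f_\mu=\psi_{1,(x,\omega)}(\mu)$; in the second case Lemma~\ref{lemma of muu>supp m} gives the same with $\psi_{2L+1,(x,\omega)}(\mu)$. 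Call these the \emph{agreement intervals} $\{K_\ell\}_{\ell\in\ZZ}$; on each one $\overline f_\mu$ and $\underline f_\mu$ coincide and equal a single branch, and the whole family $\{K_\ell\}$ transforms equivariantly under $\tau_y$ (it is built from stationary data by Lemma~\ref{inf and sup of admissible functions}).

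On the block $B_\ell$ lying between $K_\ell$ and $K_{\ell+1}$ the functions $\overline f_\mu\ge\underline f_\mu$ agree on a full collar inside the two adjacent agreement intervals, and any primitive of either is a viscosity solution of $H(u',x,\omega)=\mu$. Thus the homotopy operator $\mathcal F$ introduced just before Lemma~\ref{homotopy between sol} is available on $B_\ell$, and by Lemma~\ref{homotopy between sol} the function obtained by using $\mathcal F_{B_\ell}(\overline f_\mu,\underline f_\mu,c_\ell)$ on $B_\ell$ and keeping the common value $\overline f_\mu=\underline f_\mu$ on the $K_\ell$'s is, for every admissible choice of the $c_\ell$'s, a global viscosity solution of $H(u',x,\omega)=\mu$, $u'\ge 0$; the patching is harmless because near each $\partial B_\ell$ everything already equals the common branch, and on the leftover part of $K_\ell$ one uses that single branch, which solves the equation there (cf.\ also Lemma~\ref{combination lemma}). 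Normalizing the interpolant to vanish at the left endpoint of $B_\ell$ one has $\int_{B_\ell}\mathcal F_{B_\ell}(\overline f_\mu,\underline f_\mu,c_\ell)=c_\ell$, with $c_\ell$ ranging over $[\int_{B_\ell}\underline f_\mu,\int_{B_\ell}\overline f_\mu]$, the two endpoints returning $\underline f_\mu$ and $\overline f_\mu$ on $B_\ell$ respectively, and $c_\ell\mapsto\mathcal F_{B_\ell}(\overline f_\mu,\underline f_\mu,c_\ell)$ continuous (pointwise a.e.). I would then set, for $\theta\in[0,1]$, the same convex parameter on every block, $c_\ell(\theta):=(1-\theta)\int_{B_\ell}\underline f_\mu+\theta\int_{B_\ell}\overline f_\mu$, and let $f_\theta(x,\omega)$ be the resulting patched function.

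Because the admissible decomposition, the agreement intervals, the numbers $\int_{B_\ell}\underline f_\mu$ and $\int_{B_\ell}\overline f_\mu$, and the operator $\mathcal F$ are all translation‑equivariant, and one uses the \emph{same} $\theta$ on every block, $f_\theta$ is stationary; moreover $f_0=\underline f_\mu$ and $f_1=\overline f_\mu$. Using the uniform bound $0\le f_\theta(x,\cdot)\le\max\{p\ge 0:H(p,x,\cdot)\le\overline M\}$ from the definition of admissible function (finite and uniform in $(x,\omega)$ by coercivity) together with the pointwise continuity in $\theta$, dominated convergence shows $\theta\mapsto\mathbf E[f_\theta(0,\cdot)]$ is continuous on $[0,1]$ with endpoint values $\mathbf E[\underline f_\mu(0,\cdot)]$ and $\mathbf E[\overline f_\mu(0,\cdot)]$. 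By the intermediate value theorem there is $\theta_0$ with $\mathbf E[f_{\theta_0}(0,\cdot)]=p$, and $f:=f_{\theta_0}$ satisfies (1) and (2). The main obstacle is the mechanism of the second paragraph: $\overline f_\mu$ and $\underline f_\mu$ do not coincide near any bounded window, so the homotopy lemmas cannot be applied on a single growing interval; the key is to first extract, from the large‑oscillation regime via Lemmas~\ref{lemma of muu>inff M}–\ref{lemma of muu>supp m}, the bi‑infinite family of agreement intervals, which simultaneously supplies the buffer zones required by Lemma~\ref{homotopy between sol} and makes the uniform‑$\theta$ construction stationary.
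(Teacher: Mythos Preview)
Your proposal is correct and follows essentially the same route as the paper: reduce to $\mu\in\mathcal P$, use Lemmas~\ref{lemma of muu>inff M}/\ref{lemma of muu>supp m} to obtain stationary agreement intervals on which $\overline f_\mu=\underline f_\mu$, apply the homotopy operator $\mathcal F$ with a common parameter $\theta$ on the intermediate blocks, and conclude by the intermediate value theorem. The only deviation is in the continuity step: the paper uses the identity $\int_{B_\ell}f_\theta=c_\ell(\theta)$ (which you noted) to get a direct Lipschitz bound $|\mathbf E[f_t(0,\cdot)]-\mathbf E[f_s(0,\cdot)]|\le C|t-s|$ via the ergodic theorem, whereas your dominated convergence argument requires pointwise a.e.\ continuity of $\theta\mapsto f_\theta(x,\omega)$, a property of the Perron construction that is plausible but less immediate than the block--integral route you already have in hand.
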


\begin{proof}
Suppose $\underline{u}^{\prime}(x,\omega)=\underline{f}_{\mu}(x,\omega)$ and $\overline{u}^{\prime}(x,\omega)=\overline{f}_{\mu}(x,\omega)$, Lemma \ref{inf and sup of admissible functions} implies $H(\underline{u}^{\prime},x,\omega)=\mu, H(\overline{u}^{\prime},x,\omega)=\mu$. Fix $\omega\in\Omega$, according to Lemma \ref{lemma of muu>inff M} and Lemma \ref{lemma of muu>supp m}, there exists a sequence of intervals $\lbrace I_{k} \rbrace_{k\in\ZZ}$, $I_{k}=(a_{k},a_{k+1})$, such that $\lim\limits_{k\rightarrow\pm\infty}a_{k}=\pm\infty$ and
\begin{eqnarray*}
\underline{f}_{\mu}(x,\omega)=\overline{f}_{\mu}(x,\omega), x\in I_{2k} &&
\underline{f}_{\mu}(x,\omega)\leq\overline{f}_{\mu}(x,\omega), x\in I_{2k+1}
\end{eqnarray*}

Denote
\begin{eqnarray*}
\underline{d}_{i}=\int_{a_{i}}^{a_{i+1}}\underline{f}_{\mu}(s,\omega)ds &&
\overline{d}_{i}=\int_{a_{i}}^{a_{i+1}}\overline{f}_{\mu}(s,\omega)ds
\end{eqnarray*}

For each $t\in[0,1]$, define $f_{t}:\RR\times\Omega\rightarrow \RR$ by
\begin{eqnarray*}
f_{t}(x,\omega):=\begin{cases}
\underline{f}_{\mu}(x,\omega)=\overline{f}_{\mu}(x,\omega) & x\in I_{2i}\\
\mathcal{F}_{I_{2i+1}}(\overline{f}_{\mu},\underline{f}_{\mu},t\overline{d}_{i}+(1-t)\underline{d}_{i}) & x\in I_{2i+1}\\
\end{cases}
\end{eqnarray*}

So $f_{t}(x,\omega)$ is stationary and
\begin{eqnarray*}
\int_{a_{0}}^{a_{i}}f_{t}(x,\omega)dx=\int_{a_{0}}^{a_{i}}t\overline{f}_{\mu}(x,\omega)+(1-t)\underline{f}_{\mu}(x,\omega)dx
\end{eqnarray*}

By \textbf{(A2)}, $\underline{f}_{\mu}$ and $\overline{f}_{\mu}$ are bounded. Then there is some constant $C>0$, such that
\begin{eqnarray*}
\frac{1}{\left|a_{i}-a_{0}\right|}\left|\int_{a_{0}}^{a_{i}}f_{t}(x,\omega)dx-\int_{a_{0}}^{a_{i}}f_{s}(x,\omega)dx\right|&=&\left|t-s\right|\left|\int_{a_{0}}^{a_{i}}\left(\overline{f}_{\mu}(s,\omega)-\underline{f}_{\mu}(s,\omega)\right)ds\right|\leq C\left|t-s\right|
\end{eqnarray*}

Thus 
\begin{eqnarray*}
\lim_{L\rightarrow\infty}\frac{1}{L}\left|\int_{0}^{L}f_{t}(x,\omega)dx-\int_{0}^{L}f_{s}(x,\omega)dx\right|\leq C \left|t-s\right|
\end{eqnarray*}

By ergodic theorem, for a.e. $\omega\in\Omega$,
\begin{eqnarray*}
\lim_{L\rightarrow \infty}\frac{1}{L}\int_{0}^{L}f_{t}(x,\omega)dx=\mathbf{E}\left[f_{t}(0,\omega)\right] &&
\lim_{L\rightarrow \infty}\frac{1}{L}\int_{0}^{L}f_{s}(x,\omega)dx=\mathbf{E}\left[f_{s}(0,\omega)\right]
\end{eqnarray*}

Then $\left|\mathbf{E}\left[f_{t}(0,\omega)\right]-\mathbf{E}\left[f_{s}(0,\omega)\right]\right|\leq C \left|t-s\right|$. So $\mathbf{E}\left[f_{t}(0,\omega)\right]$ is a continuous function of $t$, thus
\begin{equation*}
\bigcup_{t\in[0,1]} \mathbf{E}\left[f_{t}(0,\omega)\right] =\left[\int_{\Omega}\underline{f}_{\mu}(0,\omega)d\omega,\int_{\Omega}\overline{f}_{\mu}(0,\omega)d\omega\right]
\end{equation*}

So for any $p\in[\int_{\Omega}\underline{f}_{\mu}(0,\omega)d\omega,\int_{\Omega}\overline{f}_{\mu}(0,\omega)d\omega]$, there is $t=t(p)\in[0,1]$, s.t. $\mathbf{E}[f_{t}(0,\omega)]=p$. 

Let $u$ be the solution of $u^{\prime}=f_{t}(x,\omega)$. By Lemma \ref{homotopy between sol}, $u$ is a solution of $H(u^{\prime},x,\omega)=\mu$.

\end{proof}

\begin{lemma}
Let $H(p,x,\omega)$ satisfy \textbf{(A1)-(A3)} and be constrained with index $(0,L)$. Fix $\omega\in\Omega$, assume $\mu_{m}\rightarrow\mu$ and $f_{m}(x)\in\mathcal{A}_{\mu_{m}}(\omega)$. Then we have the following hold.
 
(1)If $\mu\in\mathcal{P}$, then $\limsup\limits_{m\rightarrow\infty}f_{m}(x)\in\mathcal{A}_{\mu}(\omega)$ and $
\liminf\limits_{m\rightarrow\infty}f_{m}(x)\in\mathcal{A}_{\mu}(\omega)$.

(2)If $\underline{m}\geq 0$ and $\mu\leq\underline{m}$, then except a countable set,
\begin{eqnarray*}
\limsup\limits_{m\rightarrow\infty}f_{m}(x)=\liminf\limits_{m\rightarrow\infty}f_{m}(x)=\psi_{2L+1,(x,\omega)}(\mu)
\end{eqnarray*}

(3)If $\mu\geq \overline{M}$, then except a countable set,
\begin{eqnarray*}
\limsup\limits_{m\rightarrow\infty}f_{m}(x)=\liminf\limits_{m\rightarrow\infty}f_{m}(x)=\psi_{1,(x,\omega)}(\mu)
\end{eqnarray*}

\end{lemma}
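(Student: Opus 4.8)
The plan is to work throughout with the monotone branch structure of a constrained Hamiltonian of index $(0,L)$, whose turning points $0<q_L<p_L<\cdots<q_1<p_1$ are fixed constants; thus each $m_j(\cdot,\omega)$, $M_j(\cdot,\omega)$ is continuous, stationary and cluster-point-free, and the non-negative branches $\phi_{2L+1},\phi_{2L},\dots,\phi_1$ have consecutive domains $[0,q_L],[q_L,p_L],\dots,[q_1,p_1],[p_1,\infty)$, so that wherever two of the functions $\psi_{j,(x,\omega)}(\mu)$ are simultaneously defined they are strictly ordered by $j$. Since $\{\mu_m\}$ is bounded, \textbf{(A2)} gives a uniform bound $0\le f_m(x)\le r(N)$ on $[-N,N]$; hence $\overline g:=\limsup_m f_m$ and $\underline g:=\liminf_m f_m$ are finite everywhere and inherit condition (1) of admissibility by passing to the limit (with the same $\overline M$, using continuity of $H$).

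For part (1) I would fix the $(\mu,\omega)$-admissible decomposition $\{I_i\}$ and a compact $K\subset I_i$. Because $\mu\notin\{m_j(x,\omega),M_j(x,\omega):x\in K,\ 1\le j\le L\}$ and the $m_j,M_j$ are continuous, $\mu_m$ also avoids this finite family on $K$ for $m$ large, so $K$ lies inside a single $(\mu_m,\omega)$-admissible interval and $f_m|_K=\psi_{j(m),\cdot}(\mu_m)$ for one branch $j(m)$, drawn from the fixed finite set of branches defined at level $\approx\mu$ on $K$. On the connected set $K$ these branch functions never cross, hence are totally ordered by $j$ independently of the point; combining this with $\psi_{j,\cdot}(\mu_m)\to\psi_{j,\cdot}(\mu)$ uniformly on $K$ gives $\overline g|_K=\psi_{j^{+},\cdot}(\mu)$ and $\underline g|_K=\psi_{j^{-},\cdot}(\mu)$, where $j^{\pm}$ is the top/bottom branch attained by $j(m)$ infinitely often. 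Patching over a compact exhaustion of $I_i$ shows $\overline g$ and $\underline g$ are, on each $I_i$, a single branch $\psi_{\cdot,\cdot}(\mu)$: condition (2) holds with the $(\mu,\omega)$-admissible decomposition. What remains is condition (3), that solutions of $u'=\overline g$ (resp.\ $u'=\underline g$) are viscosity solutions of $H(\cdot,\cdot,\omega)=\mu$; this is classical on the interior of each $I_i$, and at a junction $a_i$ — where $\mu\in\{m_j(a_i,\omega),M_j(a_i,\omega)\}$ — I would verify the one-sided sub-/super-solution inequalities exactly as in the proof of Lemma~\ref{inf and sup of admissible functions}, now using that near $a_i$ one has $f_m\le\psi_{j^{+},\cdot}(\mu_m)$ (resp.\ $\ge\psi_{j^{-},\cdot}(\mu_m)$) for $m$ large, that each $f_m$ solves $H(\cdot,\cdot,\omega)=\mu_m$, and that $\mu_m\to\mu$ with $H$ continuous. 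This gives $\overline g,\underline g\in\mathcal A_\mu(\omega)$.

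Parts (2) and (3) rest on a simpler observation. If $\mu<\underline m$ (with $\underline m\ge 0$), then $\mu_m<\underline m$ for $m$ large, so $\mathcal A_{\mu_m}(\omega)=\{\psi_{2L+1,\cdot}(\mu_m)\}$ and $f_m=\psi_{2L+1,\cdot}(\mu_m)\to\psi_{2L+1,\cdot}(\mu)$ locally uniformly, with no exceptional set; likewise for $\mu>\overline M$ with $\phi_1$. When $\mu=\underline m$ I would split $\NN$ into $\{m:\mu_m\le\underline m\}$, handled as above, and $\{m:\mu_m>\underline m\}$, where $\mu_m\in\mathcal P$ (since $\underline m\le\underline M<\overline m\le\overline M$) and $f_m\in\mathcal A_{\mu_m}(\omega)$; for this part the key point is that for a fixed $x$ with $m(x,\omega)>\underline m$ and $m$ large one has $\mu_m<m(x,\omega)=\min_i m_i(x,\omega)\le H(p,x,\omega)$ for every $p\ge q_L$, so the admissible piece through $x$ cannot use any branch other than $\phi_{2L+1}$, whence $f_m(x)=\psi_{2L+1,x}(\mu_m)\to\psi_{2L+1,x}(\underline m)$. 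Therefore $f_m(x)\to\psi_{2L+1,x}(\underline m)$ for every $x$ outside $\{x:m(x,\omega)=\underline m\}\subset\bigcup_j\{x:m_j(x,\omega)=\underline m\}$, a finite union of cluster-point-free level sets and hence countable. Part (3) at $\mu=\overline M$ is the mirror image, using that on $[0,p_1]$ one has $H(\cdot,x,\omega)\le\max_i M_i(x,\omega)$, so only $\phi_1$ survives once $\mu_m>M(x,\omega)$, the exceptional set being the countable $\{x:M(x,\omega)=\overline M\}$.

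The main obstacle I expect is the junction analysis in part (1): showing that $\overline g$ and $\underline g$ satisfy the viscosity condition (3) at the endpoints $a_i$ of the $(\mu,\omega)$-admissible decomposition, where the hypothesis $\mu\in\{m_j(a_i,\omega),M_j(a_i,\omega)\}$ must be combined carefully with the one-sided branch comparisons to decide, limb by limb, whether the sub-solution or the super-solution test is binding — in effect, re-running in the limit the local bookkeeping of Lemma~\ref{inf and sup of admissible functions}. The rest is routine manipulation of the monotone branches together with the ergodic/no-cluster-point structure.
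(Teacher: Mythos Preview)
Your proposal is correct and follows essentially the same route as the paper. For the junction step in part~(1) that you flag as the obstacle, the paper extracts subsequences $\{f_{l_n}\}$ and $\{f_{q_n}\}$ realising the limsup branch on each of two adjacent intervals $I_k$, $I_{k+1}$, passes to the limit via stability of viscosity solutions to obtain genuine solutions $u_l$, $u_q$ of $H(\cdot,\cdot,\omega)=\mu$ on $(a_k,a_{k+2})$, and then observes that the jump of $\overline g$ at $a_{k+1}$ is contained in the jump of one of $u_l'$, $u_q'$---so the bookkeeping of Lemma~\ref{inf and sup of admissible functions} transfers verbatim; parts~(2) and~(3) are handled exactly as you outline, with exceptional set $\bigcup_j\{x:m_j(x,\omega)=\mu\}$ (respectively $\bigcup_j\{x:M_j(x,\omega)=\mu\}$).
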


\begin{proof}
Only prove $f(x)=\limsup\limits_{m\rightarrow \infty}f_{m}(x)\in\mathcal{A}_{\mu}(\omega)$. The proof for $\liminf$ is similar.

\vspace{3mm}

(1)Let $\lbrace I_{i} \rbrace_{i\in\ZZ}$ be the $(\mu,\omega)$ admissible decomposition of $\RR$. Fix $k\in\ZZ$ and $\epsilon\ll 1$, there is $N\in\NN$, when $m>N$, $\mu_{m}\notin\lbrace m_{i}(x,\omega),M_{j}(x,\omega)|1\leq i,j\leq L, x\in(a_{k}+\epsilon,a_{k+1}-\epsilon)\cup (a_{k+1}+\epsilon,a_{k+2}-\epsilon) \rbrace$.

There are $l,\widetilde{l},q,\widetilde{q}\in\lbrace 1,2,\cdots,2L+1 \rbrace$, $\lbrace f_{l_{n}} \rbrace_{n\geq 1}$ and $\lbrace f_{q_{n}} \rbrace_{n\geq 1}$, such that
\begin{eqnarray*}
f_{l_{n}}(x)=\begin{cases}
\psi_{l,(x,\omega)}(\mu) & x\in(a_{k}+\frac{1}{n},a_{k+1}-\frac{1}{n})\\
\psi_{\widetilde{l},(x,\omega)}(\mu) & x\in(a_{k+1}+\frac{1}{n},a_{k+2}-\frac{1}{n})\\
\end{cases}
&& f_{q_{n}}(x)=\begin{cases}
	\psi_{\widetilde{q},(x,\omega)}(\mu) & x\in(a_{k}+\frac{1}{n},a_{k+1}-\frac{1}{n})\\
	\psi_{q,(x,\omega)}(\mu) & x\in(a_{k+1}+\frac{1}{n},a_{k+2}-\frac{1}{n})\\
\end{cases}
\end{eqnarray*}

\begin{eqnarray*}
f(x)|_{I_{k}}=\psi_{l,(x,\omega)}(\mu) && f(x)|_{I_{k+1}}=\psi_{q,(x,\omega)}(\mu)
\end{eqnarray*}

It suffices to show that the solution of $u^{\prime}=f$ is a viscosity solution of (\ref{equation for admissible set}) at $a_{k+1}$. 

Define $u_{l}\in W^{1,\infty}(a_{k},a_{k+2})$ and $u_{q}\in W^{1,\infty}(a_{k},a_{k+2})$ by solutions of
\begin{eqnarray*}
u_{l}^{\prime}(x)=\begin{cases}
\psi_{l,(x,\omega)}(\mu) & x\in I_{k}\\
\psi_{\widetilde{l},(x,\omega)}(\mu) & x\in I_{k+1}
\end{cases} &&
u_{q}^{\prime}(x)=\begin{cases}
\psi_{\widetilde{q},(x,\omega)}(\mu) & x\in I_{k}\\
\psi_{q,(x,\omega)}(\mu) & x\in I_{k+1}
\end{cases}
\end{eqnarray*}

By stability of viscosity solutions, $u_{l}$ and $u_{q}$ are both viscosity solutions to 
\begin{eqnarray*}
H(v^{\prime}(x),x,\omega)=\mu  && x\in(a_{k},a_{k+2})
\end{eqnarray*}

The jump of $f$ at $a_{k+1}$ is contained in the jump of $u_{l}^{\prime}$ or the jump of $u_{q}^{\prime}$ at $a_{k+1}$, so the solution of $u^{\prime}=f$ is a viscosity solution of (\ref{equation for admissible set}).

\vspace{3mm}

(2)Denote $A=\lbrace x\in\RR|\mu=m_{i}(x)\text{ for some }1\leq i\leq L \rbrace$. Since each of $m_{i}(x,\omega)$ has no cluster point, $A$ is countable. Since $\underline{m}\geq 0$ and $\mu\leq \underline{m}$, if $x\notin A$, then $\limsup\limits_{m\rightarrow\infty}f_{m}(x)=\liminf\limits_{m\rightarrow\infty}f_{m}(x)=\psi_{2L+1,(x,\omega)}(\mu)$.

\vspace{3mm}

(3)Denote $B=\lbrace x\in\RR|\mu=M_{j}(x)\text{ for some }1\leq j\leq L \rbrace$. Since each $M_{j}(x,\omega)$ has no cluster point, $B$ is countable. Since $\mu\geq \overline{M}$, if $x\notin B$, then $\limsup\limits_{m\rightarrow\infty}f_{m}(x)=\liminf\limits_{m\rightarrow\infty}f_{m}(x)=\psi_{1,(x,\omega)}(\mu)$.

\end{proof}

\begin{notation}
	For each $\mu\geq 0$, denote $\mathcal{I}_{\mu}=\left[\int_{\Omega}\underline{f}_{\mu}(0,\omega)d\omega,\int_{\Omega}\overline{f}_{\mu}(0,\omega)d\omega\right]$.
\end{notation}

\begin{remark}
	If $\mu\ne \nu$, then $\mathcal{I}_{\mu}\bigcap \mathcal{I}_{\nu}=\emptyset$. 
\end{remark}

\begin{lemma}
If $\lim\limits_{m\rightarrow \infty}\mu_{m}=\mu$, then
\begin{eqnarray*}
\int_{\Omega}\overline{f}_{\mu}(0,\omega)d\omega \geq \limsup_{m\rightarrow\infty}\int_{\Omega}\overline{f}_{\mu_{m}}(0,\omega)d\omega &&
\int_{\Omega}\underline{f}_{\mu}(0,\omega)d\omega \leq \liminf_{m\rightarrow\infty}\int_{\Omega}\underline{f}_{\mu_{m}}(0,\omega)d\omega
\end{eqnarray*}

Moreover, $\bigcup\limits_{\mu\geq 0}\mathcal{I}_{\mu}=[q_{0},\infty) $ with $ q_{0}=\int_{\Omega}\underline{f}_{0}(0,\omega)d\omega$.
\end{lemma}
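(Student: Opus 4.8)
Write $a(\mu):=\int_{\Omega}\underline f_{\mu}(0,\omega)\,d\omega$ and $b(\mu):=\int_{\Omega}\overline f_{\mu}(0,\omega)\,d\omega$, so that $\mathcal I_{\mu}=[a(\mu),b(\mu)]$; by \textbf{(A2)} the admissible functions are bounded uniformly in $(x,\omega)$ and in $\mu$ over bounded ranges, so $a$ and $b$ are finite and nonnegative. The plan is to prove the two inequalities first --- they say precisely that $b$ is upper semicontinuous and $a$ is lower semicontinuous --- and then to deduce $\bigcup_{\mu\ge 0}\mathcal I_{\mu}=[q_{0},\infty)$ from that together with the disjointness of the $\mathcal I_{\mu}$ (preceding remark) and coercivity. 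Throughout, the key trick is that, because $\overline f_{\nu}$ and $\underline f_{\nu}$ are stationary (Lemma \ref{inf and sup of admissible functions}), measure-preservation and Fubini give $\int_{\Omega}\overline f_{\nu}(0,\omega)\,d\omega=\int_{\Omega}\int_{0}^{1}\overline f_{\nu}(y,\omega)\,dy\,d\omega$ (and likewise for $\underline f_{\nu}$) for every $\nu$, which trades the single point $x=0$ for a spatial average over which countable exceptional sets are harmless.

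For the first inequality, fix $\mu$ and put $g(y,\omega):=\limsup_{m\to\infty}\overline f_{\mu_{m}}(y,\omega)$. Since $\overline f_{\mu_{m}}\in\mathcal A_{\mu_{m}}(\omega)$, the preceding lemma gives, for every $\omega$, that $g(\cdot,\omega)\in\mathcal A_{\mu}(\omega)$ when $\mu\in\mathcal P$ (hence $g(y,\omega)\le\overline f_{\mu}(y,\omega)$ for all $y$), and that in the degenerate cases $g(\cdot,\omega)$ agrees with $\psi_{2L+1,(\cdot,\omega)}(\mu)$, resp. $\psi_{1,(\cdot,\omega)}(\mu)$, off a countable set, so again $g(y,\omega)\le\overline f_{\mu}(y,\omega)$ for Lebesgue-a.e. $y$. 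Applying the reverse Fatou lemma (everything is dominated by a fixed constant), first in $\omega$ and then in $y$, and using the stationarity identity above, one gets $\limsup_{m}\int_{\Omega}\overline f_{\mu_{m}}(0,\omega)\,d\omega\le\int_{\Omega}\int_{0}^{1}g\,dy\,d\omega\le\int_{\Omega}\int_{0}^{1}\overline f_{\mu}\,dy\,d\omega=\int_{\Omega}\overline f_{\mu}(0,\omega)\,d\omega$, which is the claim. The inequality for $\underline f_{\mu}$ is the mirror image: replace $\limsup$ by $\liminf$, use the $\liminf$ part of the preceding lemma, and the ordinary Fatou lemma.

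For the covering statement set $S:=\bigcup_{\mu\ge 0}\mathcal I_{\mu}$ and verify four things. (i) $S$ is closed: if $p_{n}\in\mathcal I_{\mu_{n}}$ and $p_{n}\to p$, then $\{\mu_{n}\}$ is bounded since $a(\mu)=\int_{\Omega}\psi_{1,(0,\omega)}(\mu)\,d\omega\to\infty$ as $\mu\to\infty$ by coercivity; along a subsequence $\mu_{n}\to\mu^{*}\ge 0$, and the semicontinuity just proved yields $a(\mu^{*})\le p\le b(\mu^{*})$, i.e. $p\in\mathcal I_{\mu^{*}}\subseteq S$. (ii) $q_{0}=\min S$: clearly $q_{0}=a(0)\in\mathcal I_{0}\subseteq S$, and $a(\mu)\ge a(0)$ for every $\mu\ge 0$ because one checks from the branch structure that $\underline f_{\mu}\ge\underline f_{0}$ a.e. (the flattest increasing branch $\phi_{2L+1}$, valued in $[0,q_{L}]$, gives the least admissible slope, its inverse is nondecreasing in $\mu$, and every other branch has $p$-values $\ge q_{L}\ge\psi_{2L+1,(x,\omega)}(0)$). (iii) $S$ is unbounded above since $b(\mu)\ge a(\mu)\to\infty$. (iv) $S$ has no gaps: if $p\in(q_{0},\infty)\setminus S$, then by (i) we have $\alpha:=\sup\big(S\cap(-\infty,p)\big)<p<\beta:=\inf\big(S\cap(p,\infty)\big)$ with $\alpha,\beta\in S$ and $(\alpha,\beta)\cap S=\emptyset$; if $\alpha\in\mathcal I_{\mu_{1}}$, $\beta\in\mathcal I_{\mu_{2}}$, then $b(\mu_{1})=\alpha$ (it is $\ge\alpha$, cannot lie in $(\alpha,p)$ as that misses $S$, and cannot be $\ge p$ since then $p\in[\alpha,b(\mu_{1})]\subseteq\mathcal I_{\mu_{1}}$), similarly $a(\mu_{2})=\beta$, and $\mu_{1}\ne\mu_{2}$ by disjointness. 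Every $\mathcal I_{\mu}$ with $\mu$ between $\mu_{1}$ and $\mu_{2}$ is an interval inside $S\setminus(\alpha,\beta)$, hence lies in $(-\infty,\alpha]$ or in $[\beta,\infty)$; upper semicontinuity of $b$ at $\mu_{1}$ (using $b(\mu_{1})=\alpha$) makes the first alternative hold in a neighbourhood of $\mu_{1}$, and lower semicontinuity of $a$ at $\mu_{2}$ (using $a(\mu_{2})=\beta$) makes the second hold near $\mu_{2}$. At a common boundary point $\mu^{*}$ of the two sets one obtains $a(\mu^{*})\le\alpha$ (lower semicontinuity through the first set) and $b(\mu^{*})\ge\beta$ (upper semicontinuity through the second), so $\mathcal I_{\mu^{*}}=[a(\mu^{*}),b(\mu^{*})]\supseteq[\alpha,\beta]$ meets $(\alpha,\beta)$, contradicting $\mathcal I_{\mu^{*}}\subseteq S$. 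Combining (i)--(iv), $S$ is a closed interval with minimum $q_{0}$ and no upper bound, i.e. $S=[q_{0},\infty)$.

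I expect step (iv) to be the main obstacle: it is the one place where the endpoints $a$ and $b$ must be shown to "close up" as $\mu$ varies, and this has to be carried out using only the one-sided semicontinuity already proved, since monotonicity of $\mu\mapsto a(\mu),b(\mu)$ on the interior range $\mathcal P$ is not available a priori --- the boundary-point and switching device above is arranged precisely to avoid needing it. A secondary point requiring care is the harmless-countable-exceptional-set issue in the semicontinuity proof, which is what forces the passage from $x=0$ to the spatial average via stationarity.
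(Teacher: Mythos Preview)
The paper does not prove this lemma in place; it just records ``Same as the proof of Lemma 3.8 in [\ref{1-d seperable noncovex by ATY}]'' and defers to Armstrong--Tran--Yu. So there is no line-by-line comparison to make. Your reconstruction follows the expected route and is essentially correct: the two semicontinuity inequalities are obtained from the preceding lemma on $\limsup/\liminf$ of admissible functions via (reverse) Fatou, together with the stationarity identity $\int_{\Omega}f(0,\omega)\,d\omega=\int_{\Omega}\int_{0}^{1}f(y,\omega)\,dy\,d\omega$ to neutralise the countable exceptional sets; and the covering statement is assembled from closedness, unboundedness, and your no-gap device (iv). The connectedness/boundary-point argument in (iv) is correct as written: from a sequence in the ``left'' class lower semicontinuity of $a$ gives $a(\mu^{*})\le\alpha$, from a sequence in the ``right'' class upper semicontinuity of $b$ gives $b(\mu^{*})\ge\beta$, and the resulting $\mathcal I_{\mu^{*}}\supseteq[\alpha,\beta]$ contradicts the gap.

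The one place to tighten is (ii). Your branch calculation shows that for every $f\in\mathcal A_{\mu}(\omega)$ and every $x$ one has either $f(x)=\psi_{2L+1,(x,\omega)}(\mu)\ge\psi_{2L+1,(x,\omega)}(0)$ or $f(x)\ge q_{L}\ge\psi_{2L+1,(x,\omega)}(0)$; hence $\underline f_{\mu}(x,\omega)\ge\psi_{2L+1,(x,\omega)}(0)$. To get $\underline f_{\mu}\ge\underline f_{0}$ from this you are implicitly using $\underline f_{0}(x,\omega)=\psi_{2L+1,(x,\omega)}(0)$, which is not automatic: $\underline f_{0}$ is the infimum over \emph{admissible} functions, and on pieces of the $(0,\omega)$-decomposition where $\psi_{2L+1,(x,\omega)}(0)$ fails to exist (e.g.\ where $M_{L}(x,\omega)<0$) or where the viscosity constraints at the interfaces rule it out, $\underline f_{0}$ may sit on a higher branch. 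This does not derail your scheme --- once (i), (iii), (iv) are in place $S$ is already a closed ray $[s_{0},\infty)$, and identifying $s_{0}=a(0)$ is a short separate step --- but the sentence ``one checks from the branch structure that $\underline f_{\mu}\ge\underline f_{0}$'' is carrying more than the parenthetical justification supplies.
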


\begin{proof}
Same as the proof of Lemma 3.8 in \ref{1-d seperable noncovex by ATY}.
\end{proof}

Denote $z_{l}(x,\omega):=\min\big\lbrace p\leq 0:H(q,x,\omega)\leq 0 \text{ on } [p,0] \big\rbrace$.

\subsection{Extreme level set of effective Hamiltonian}

\begin{lemma}\label{minimum level negative}
Let $H(p,x,\omega)$ satisfy \textbf{(A1)-(A3)} and be constrained with index $(0,L)$. For any $p\in[\mathbf{E}[z_{l}(0,\omega)],\mathbf{E}[\underline{f}_{0}(0,\omega)]]$, there is a stationary function $f(x,\omega)$ such that $p=\mathbf{E}[f(0,\omega)]$ and any solution to $u^{\prime}=f$ is a viscosity sub-solution of $H(u^{\prime},x,\omega)=0, x\in\RR$.
\end{lemma}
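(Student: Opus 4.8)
The plan is to realize every $p$ in the stated interval as $\mathbf{E}[f_{t(p)}(0,\cdot)]$ for a member of a stationary homotopy $\{f_t\}_{t\in[0,1]}$ of viscosity subsolutions of $\{H(\cdot,x,\omega)=0\}$ interpolating between $f_0=z_l$ and $f_1=\underline{f}_0$. First, the facts we use. Since $\esssup_{(x,\omega)}H(0,x,\omega)=0$, Lemma \ref{for a.e. omega} and continuity in $x$ give $H(0,x,\omega)\le0$ for all $x$ and a.e.\ $\omega$; hence $z_l(x,\omega)\in(-\infty,0]$ is well defined, stationary, continuous in $x$, bounded by \textbf{(A2)}, and $H(q,x,\omega)\le0$ for all $q\in[z_l(x,\omega),0]$. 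By Lemma \ref{inf and sup of admissible functions}, $\underline{f}_0\ge0$ is stationary and $u'=\underline{f}_0$ is a viscosity solution — hence a viscosity subsolution — of $H(u',x,\omega)=0$. Thus $z_l\le0\le\underline{f}_0$, the interval $[\mathbf{E}[z_l(0,\cdot)],\mathbf{E}[\underline{f}_0(0,\cdot)]]$ is non-empty, and it suffices to produce such a family $\{f_t\}$ with each $u'=f_t$ a viscosity subsolution of $H(u',x,\omega)=0$ for a.e.\ $\omega$ and with $t\mapsto\mathbf{E}[f_t(0,\cdot)]$ continuous; one then takes $t(p)$ with $\mathbf{E}[f_{t(p)}(0,\cdot)]=p$ and sets $f:=f_{t(p)}$.

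The family is built by inserting ``$z_l$--excursions'' into $\underline{f}_0$. Fix a stationary equivariant partition of $\RR$ into bounded cells $(a_i,a_{i+1})$ with a.s.\ locally finite, stationary vertex set $\{a_i\}$ — for instance the branch-change points of $\underline{f}_0$ (Lemma \ref{decompsition lemma}) together with the zero sets of $H(q_j,\cdot,\omega)$ and $H(p_j,\cdot,\omega)$, which are discrete by the no-cluster-point condition in Definition \ref{constrained Hamiltonian}. For $t\in[0,1]$ let $E_i^t:=(a_i,\,a_i+(1-t)(a_{i+1}-a_i))$, set $f_t:=z_l$ on $\bigcup_i E_i^t$ and $f_t:=\underline{f}_0$ elsewhere; then $f_t$ is stationary, $f_1=\underline{f}_0$, and $f_0=z_l$. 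To see $u'=f_t$ is a viscosity subsolution, recall the gluing principle: the subsolution inequality is tested only against $C^1$ functions touching $u$ from above, so at an \emph{upward} slope jump it is vacuous, at a \emph{downward} jump from $\beta$ to $\gamma$ it requires $H(q,x,\omega)\le0$ on $[\gamma,\beta]$, and at interior points of a branch it requires $H(u',x,\omega)\le0$ — which holds both where $f_t=z_l$ (there $H(z_l,x,\omega)=0$) and where $f_t=\underline{f}_0$ (there $H(\underline{f}_0,x,\omega)=0$), while the jumps of $\underline{f}_0$ itself are already admissible because $u'=\underline{f}_0$ is a solution. \emph{Leaving} an excursion is an upward jump (from $z_l\le0$ to $\underline{f}_0\ge0$) and costs nothing; so the whole subsolution question reduces to the \emph{entry} points $a_i$, where the slope drops from $\underline{f}_0$ to $z_l$ and one needs $H(q,x,\omega)\le0$ for all $q\in[z_l(a_i,\omega),\underline{f}_0(a_i^\pm,\omega)]$.

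Verifying this descent condition is the one real difficulty. On $[z_l,0]$ it holds by the definition of $z_l$; on $[0,\underline{f}_0]$ it can fail, since minimality of $\underline{f}_0$ forces $u'=\underline{f}_0$ to be a solution but not that $[0,\underline{f}_0(x,\omega)]$ avoid $\{H(\cdot,x,\omega)>0\}$: on intervals where some local maximum $H(q_j,\cdot,\omega)$ is positive, the branch carrying $\underline{f}_0$ lies past the first positive zero of $H(\cdot,x,\omega)$, with $H>0$ in between. Using the constrained structure and the discreteness of those zero sets, one shows the set $\mathcal G$ on which the descent condition holds contains every vertex $a_i$ (this is where the choice of vertices enters), and that $\RR\setminus\mathcal G$ is an a.s.\ locally finite union of open intervals, each of which — after a harmless further refinement of $\{a_i\}$ — is a single cell on which $u'=\underline{f}_0$ is a solution; since such a cell is either missed by its excursion or entered from the good vertex $a_i$, nothing goes wrong there either. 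With this, $u'=f_t$ is a viscosity subsolution of $H(u',x,\omega)=0$ for every $t$ and a.e.\ $\omega$.

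It remains to sweep the mean. For a.e.\ $\omega$ the ergodic theorem gives $\mathbf{E}[f_t(0,\cdot)]=\lim_{L\to\infty}\frac{1}{L}\int_0^L f_t(x,\omega)\,dx$, as in the proof of Lemma \ref{level piece}. Since the excursion lengths are affine in $t$ and $|z_l-\underline{f}_0|$ is bounded, $\big|\int_0^L(f_t-f_s)\,dx\big|\le C\,|t-s|\,L$ uniformly in $L$; running the ergodic limit off a single null set along a countable dense set of parameters and extending by this estimate shows $t\mapsto\mathbf{E}[f_t(0,\cdot)]$ is Lipschitz, and it is nondecreasing because $f_t(x,\omega)$ is nondecreasing in $t$ at each $x$. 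Together with $\mathbf{E}[f_0(0,\cdot)]=\mathbf{E}[z_l(0,\cdot)]$ and $\mathbf{E}[f_1(0,\cdot)]=\mathbf{E}[\underline{f}_0(0,\cdot)]$, this shows the map is onto $[\mathbf{E}[z_l(0,\cdot)],\mathbf{E}[\underline{f}_0(0,\cdot)]]$, giving the required $t(p)$. I expect the analysis of $\mathcal G$ — the descent condition and the placement of the ``bad'' cells — to be the delicate step; the homotopy-plus-ergodic-theorem scheme is otherwise the one already used in Lemma \ref{level piece}.
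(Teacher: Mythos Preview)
Your overall scheme --- build a stationary one-parameter family $\{f_t\}$ of subsolutions interpolating between $z_l$ and $\underline{f}_0$, then use the ergodic theorem and continuity in $t$ to hit every value in the interval --- is exactly the paper's scheme and is fine. The gap is in the interpolation itself.

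You switch from $\underline{f}_0$ to $z_l$ at every cell endpoint $a_i$, and you correctly isolate the issue: this downward jump forces the test $H(q,a_i,\omega)\le 0$ for all $q\in[z_l(a_i),\underline{f}_0(a_i^-)]$. On $[z_l(a_i),0]$ this is free, but on $[0,\underline{f}_0(a_i^-)]$ it requires $\underline{f}_0(a_i^-)$ to sit on the \emph{first} increasing branch $\psi_{2L+1}$ --- otherwise $H$ rises above $0$ at some intermediate local maximum $M_j(a_i,\omega)>0$. Your vertex set (branch-change points of $\underline{f}_0$ together with zeros of $H(q_j,\cdot,\omega)$ and $H(p_j,\cdot,\omega)$) does not force this: a branch-change point of $\underline{f}_0$ is typically a place where $\underline{f}_0$ jumps \emph{between} higher branches, and a zero of $H(q_j,\cdot,\omega)$ says nothing about the other local maxima at that $x$. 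Your attempted rescue --- that bad cells are ``either missed by the excursion or entered from a good vertex'' --- cannot work as stated, because the entry point is \emph{always} $a_i$ for every $t<1$, so if the descent condition fails at $a_i$ the whole family $f_t$, $t<1$, fails there.

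The paper avoids this by choosing the partition differently and by not relying on a naked jump on the bad set. Since $\overline{m}>0$, the set $\{x:m(x,\omega)>\tfrac{3}{4}\overline{m}\}$ is stationary with positive density; on each of its components every local minimum $m_j$ is strictly positive, so the only nonnegative root of $H(\cdot,x,\omega)=0$ lies on $\psi_{2L+1}$, hence $\underline{f}_0=\psi_{2L+1,(x,\omega)}(0)$ there and $H(\cdot,x,\omega)\le 0$ on the whole segment $[z_l,\underline{f}_0]$. On those ``good'' intervals the paper simply takes the convex combination $(1-t)\underline{f}_0+t\,z_l$, which is a subsolution for the reason just given. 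On the complementary ``bad'' intervals --- where your descent condition can genuinely fail --- the paper does \emph{not} jump; it uses the Perron-type homotopy operator $\mathcal{F}_I(\underline{f}_0,z_l,\cdot)$ of Lemma \ref{homotopy between sol}, which interpolates between the two gradients while preserving the subsolution property by construction, with boundary values matched to the convex combination on the adjacent good intervals. This is the missing mechanism in your argument: you need either to place every entry point inside $\{m(\cdot,\omega)>0\}$ (and argue that $\underline{f}_0$ is on the first branch from the left there), or to replace the hard jump on the bad set by the $\mathcal{F}_I$ homotopy.
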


\begin{proof}
Since $H(p,x,\omega)$ is constrained with index $(0,L)$, $\overline{m}=\esssup\limits\limits_{(x,\omega)\in\RR\times\Omega}m(x,\omega)>0$. And by similar arguments in Lemma \ref{lemma of muu>inff M}, then: for a.e. $\omega\in\Omega$, there is $\lbrace b_{i} \rbrace_{i\in\ZZ}$ such that
\begin{eqnarray*}
\lim_{i\rightarrow\pm\infty}b_{i}=\pm\infty, & m(x,\omega)|_{(b_{2i},b_{2i+1})}\in\left[\frac{3\overline{m}}{4},\overline{m}\right], &
m(x,\omega)|_{(b_{2i+1},b_{2i+2})}\leq \frac{3}{4}\overline{m}
\end{eqnarray*}

For each $i\in\ZZ$, denote $\underline{r}_{i}=\int_{b_{i}}^{b_{i+1}}z_{l}(x,\omega)d\omega$ and $
\overline{r}_{i}=\int_{b_{i}}^{b_{i+1}}\underline{f}_{0}(x,\omega)dx$. Fix $t\in(0,1)$, define a stationary function $f_{t}(x,\omega):\RR\times\Omega\rightarrow \RR$ by the following procedure.

\vspace{3mm}

STEP 1: Modification on $(b_{2i},b_{2i+1})$. Denote
\begin{eqnarray*}
f_{l,t}(x,\omega)=\begin{cases}
(1-t)\underline{f}_{0}(x,\omega)+tz_{l}(x,\omega) & x\in\bigcup\limits_{i\in\ZZ}(b_{2i},b_{2i+1})\\
z_{l}(x,\omega) & x\in\bigcup\limits_{i\in\ZZ}[b_{2i+1},b_{2i+2}]\\
\end{cases}
\end{eqnarray*}
\begin{eqnarray*}
f_{r,t}(x,\omega)=\begin{cases}
(1-t)\underline{f}_{0}(x,\omega)+tz_{l}(x,\omega) & x\in\bigcup\limits_{i\in\ZZ}(b_{2i},b_{2i+1})\\
\underline{f}_{0}(x,\omega) & x\in\bigcup\limits_{i\in\ZZ}[b_{2i+1},b_{2i+2}]\\
\end{cases}
\end{eqnarray*}

Since $H(p,x,\omega)$ is convex in $p$ on $(z_{l}(x,\omega),\underline{f}_{0}(x,\omega))$ for all $x\in(b_{2i},b_{2i+1})$, if $u$ is the a solution of the equation $u^{\prime}=f_{l,t}$ or $u^{\prime}=f_{r,t}$, then in viscosity sense, we have $H(u^{\prime}(x,\omega),x,\omega)\leq 0, x\in\RR$

\vspace{3mm}

STEP 2: Modification on $[b_{2i+1},b_{2i+2}]$. Define
\begin{eqnarray*}
f_{t}:=\begin{cases}
\mathcal{F}_{I_{2i+1}}(\underline{f}_{0},z_{l}(x,\omega),(1-t)\overline{r}_{i}+t\underline{r}_{i}) & x\in [b_{2i+1},b_{2i+2}]\\
f_{l,t}(x,\omega)=f_{r,t}(x,\omega) & x\in (b_{2i},b_{2i+1})\\
\end{cases}
\end{eqnarray*}

By Lemma \ref{homotopy between sol}, if $u^{\prime}=f_{t}$, then in viscosity sense, we have $H(u^{\prime}(x,\omega),x,\omega)\leq 0, x\in\RR$.

By similar arguments as Lemma \ref{level piece}, there is some constant $C>0$, such that
\begin{eqnarray*}
\frac{1}{|b_{i}-b_{0}|}\left|\int_{b_{0}}^{b_{i}}f_{t}(x,\omega)dx-\int_{b_{0}}^{b_{i}}f_{s}(x,\omega)dx\right|\leq C|t-s| 
\end{eqnarray*} 
\begin{eqnarray*}
\lim_{L\rightarrow+\infty}\frac{1}{L}\left|\int_{0}^{L}f_{t}(x,\omega)dx-\int_{0}^{L}f_{s}(x,\omega)dx\right|\leq C|t-s|
\end{eqnarray*}

So $\mathbf{E}[f_{t}(0,\omega)]$ is a continuous function. Since $\mathbf{E}[f_{0}(0,\omega)]=\mathbf{E}[\underline{f}_{0}(0,\omega)], \mathbf{E}[f_{1}(0,\omega)]=\mathbf{E}[z_{l}(0,\omega)]$, it concludes that $\bigcup\limits_{t\in[0,1]} \mathbf{E}[f_{t}(0,\omega)]=[\mathbf{E}[z_{l}(0,\omega)],\mathbf{E}[\underline{f}_{0}(0,\omega)]]$. So for any $p\in[\mathbf{E}[z_{l}(0,\omega)],\mathbf{E}[\underline{f}_{0}(0,\omega)]]$, there is $t=t(p)$, such that $p=\mathbf{E}[f_{t}(0,\omega)]$, then any solution of $u^{\prime}=f_{t}(x,\omega)$ is a viscosity sub-solution of $H(v^{\prime},x,\omega)=0,x\in\RR$.

\end{proof}

\begin{lemma}\label{minimum level positive}
Let $H(p,x,\omega)$ satisfy \textbf{(A1)-(A3)} and be constrained with index $(0,L)$. Then for a.e. $\omega\in\Omega$, we have: fix $p\in\RR$, let $v_{\lambda}(\cdot,\omega)\in W^{1,\infty}(\RR)$ be the unique viscosity solution of the equation: $\lambda v_{\lambda}+H(p+v_{\lambda}^{\prime},x,\omega)=0, x\in\RR$, then $\liminf\limits_{\lambda\rightarrow 0}-\lambda v_{\lambda}(x,\omega)\geq 0$.
\end{lemma}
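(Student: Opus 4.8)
The plan is to bound $v_\lambda$ from above by a stationary, \emph{sublinear} supersolution of the discounted equation on the windows $[-R/\lambda,R/\lambda]$, and then to invoke the quantitative comparison of Lemma~\ref{CP}. By stationarity it suffices to show, for a.e.\ $\omega\in\Omega$, that $\limsup_{\lambda\to 0^+}\lambda v_\lambda(0,\omega)\le 0$. I first record the a priori bounds that will be used repeatedly: from $-\lambda v_\lambda=H(p+v_\lambda',x,\omega)$ one has $|\lambda v_\lambda|\le K(p):=\max\{|H_{\inf}(p)|,|H_{\sup}(p)|\}$, and by \textbf{(A2)} a Lipschitz bound $|v_\lambda'|\le r(p)$ independent of $\lambda$.

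The core step is the construction of the barrier, which I carry out first for $p\ge q_0$. Recall that $\bigcup_{\mu\ge 0}\mathcal{I}_\mu=[q_0,\infty)$, so there is $\mu=\mu(p)\ge 0$ with $p\in\mathcal{I}_\mu$; Lemma~\ref{level piece} then yields a stationary gradient field $f$ with $\int_\Omega f(0,\omega)\,d\omega=p$ such that, for a.e.\ $\omega$, every solution of $u'=f(\cdot,\omega)$ is a viscosity solution of $H(u',x,\omega)=\mu$ on $\RR$. Fix such a $u=u(\cdot,\omega)$, normalized by $u(0)=0$, and set $\phi(x):=u(x)-px$, so that $\phi'=f(\cdot,\omega)-p$ has zero mean, $\phi(0)=0$, and $H(p+\phi'(x),x,\omega)=\mu$ in the viscosity sense. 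By the ergodic theorem in its uniform form, $\frac1T\sup_{|t|\le T}\big|\int_0^t(f(\cdot,\omega)-p)\big|\to 0$, so for a.e.\ $\omega$ and each fixed $R>0$ the quantity $\varepsilon_\lambda(R):=\lambda\sup_{|x|\le R/\lambda}|\phi(x)|$ tends to $0$ as $\lambda\to 0^+$. Hence on $[-R/\lambda,R/\lambda]$ the function $\psi_\lambda:=\phi+\varepsilon_\lambda(R)/\lambda$ satisfies $\lambda\psi_\lambda+H(p+\psi_\lambda',x,\omega)=\lambda\phi+\varepsilon_\lambda(R)+\mu\ge\mu\ge 0$, i.e.\ it is a viscosity supersolution there, with $|\lambda\psi_\lambda|\le 2\varepsilon_\lambda(R)$ on that window.

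It remains to compare and conclude. For $\lambda$ small (depending on $R,p,\omega$) we have $|\lambda v_\lambda|+|\lambda\psi_\lambda|\le K(p)+1=:M(p)$ on $[-R/\lambda,R/\lambda]$, while $\psi_\lambda(\pm R/\lambda)\ge 0>-M(p)/\lambda$ and $v_\lambda(\pm R/\lambda)\le M(p)/\lambda$; thus the barrier argument in the proof of Lemma~\ref{CP} applies verbatim to the subsolution $v_\lambda$ and the supersolution $\psi_\lambda$ (only the one-sided conclusion is needed) and gives a constant $C(p)>0$ with
\[
\lambda v_\lambda(0,\omega)-\lambda\psi_\lambda(0,\omega)\ \le\ \frac{M(p)}{R}+\frac{M(p)\,C(p)}{R}.
\]
Since $\lambda\psi_\lambda(0,\omega)=\varepsilon_\lambda(R)\to 0$, letting $\lambda\to 0^+$ yields $\limsup_{\lambda\to0^+}\lambda v_\lambda(0,\omega)\le M(p)(1+C(p))/R$, and then $R\to\infty$ gives the claim for $p\ge q_0$. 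For the remaining range $p<q_0$ the argument is structurally identical; only the source of the stationary mean‑$p$ field $\phi$ changes: one uses Lemma~\ref{minimum level negative} (which, for $p\in[\mathbf E[z_l(0,\cdot)],q_0]$, supplies a stationary mean‑$p$ field along which $H\le 0$ in the viscosity sense) together with, when $p\le 0$, the mirror image of Lemma~\ref{existence lemma} producing solutions of $H(u',x,\omega)=\mu$ with $u'\le 0$ near the origin; after the same $\varepsilon_\lambda(R)/\lambda$‑shift one again obtains an $o(1/\lambda)$‑bounded supersolution on $[-R/\lambda,R/\lambda]$ and concludes as above.

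I expect this last case to be the main obstacle: the whole proof hinges on exhibiting, for \emph{every} $p\in\RR$, a stationary mean‑$p$ gradient field along which the discounted equation admits supersolutions of size $o(1/\lambda)$ — equivalently, on ruling out that the minimal level $0$ of $\overline H$ be undercut. The a priori bounds, the uniform ergodic theorem, and the barrier/comparison step are then routine, given the constructions of Section~\ref{Proof of Large Oscillation} already in place.
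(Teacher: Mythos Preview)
Your barrier/comparison scheme is sound for $p\ge q_0$ (and, symmetrically, for $p\le \mathbf{E}[z_l(0,\cdot)]$ via the monotone left branch $\widetilde\psi_{1}$), but there is a genuine gap on the middle interval $p\in(\mathbf{E}[z_l(0,\cdot)],q_0)$. In that range you invoke Lemma~\ref{minimum level negative}, which supplies a stationary mean-$p$ field along which $H(u',x,\omega)\le 0$ in the viscosity sense, i.e.\ a \emph{sub}solution at level $0$. Shifting this by $\varepsilon_\lambda(R)/\lambda$ gives $\lambda\psi_\lambda+H(p+\psi_\lambda',x,\omega)\le \lambda\phi+\varepsilon_\lambda(R)+0$, which is bounded \emph{above}, not below, by a nonnegative quantity; so $\psi_\lambda$ is a sub\-solution, not a supersolution, of the discounted problem. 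The one-sided comparison you need (to bound $\lambda v_\lambda(0)$ from above) requires a supersolution on the window, and none is produced by that lemma; indeed, on this flat piece the role of Lemma~\ref{minimum level negative} in the paper is precisely to give the \emph{opposite} inequality $\limsup(-\lambda v_\lambda)\le 0$, while the present lemma is what furnishes $\liminf(-\lambda v_\lambda)\ge 0$.

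The paper's argument bypasses correctors entirely and is both simpler and uniform in $p$: since the Hamiltonian is constrained with index $(0,L)$, the global infimum $\min_{q}H(q,x,\omega)$ equals $V(x,\omega):=\min\{H(0,x,\omega),\,m(x,\omega)\}\le 0$, whence the pointwise bound $-\lambda v_\lambda(x,\omega)=H(p+v_\lambda'(x),x,\omega)\ge V(x,\omega)$ holds for every $p$. Because $|\nabla(-\lambda v_\lambda)|\le \lambda\,r(p)\to 0$, every locally uniform subsequential limit of $-\lambda v_\lambda$ is a constant $C$; ergodicity (as in Lemma~\ref{lemma of muu>inff M}) produces, for each $\delta>0$, fixed intervals on which $V\ge -\delta$, forcing $C\ge -\delta$ and hence $C\ge 0$. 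This closes the gap without having to manufacture a mean-$p$ supersolution on the flat range, which your scheme cannot do from Lemma~\ref{minimum level negative} alone.
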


\begin{proof}
By assumption, $\essinf\limits\limits_{(x,\omega)\in\RR\times\Omega} H(0,x,\omega)< 0$, for each $(x,\omega)\in\RR\times\Omega$, denote
\begin{eqnarray*}
V(x,\omega):= \min\lbrace H(0,x,\omega), m(x,\omega)\rbrace
\end{eqnarray*}

Then $V(x,\omega)\leq 0$ and it is a bounded continuous stationary function. Then
\begin{eqnarray*}
H_{+}(p,x,\omega):=H(p,x,\omega)-V(x,\omega)\geq 0
\end{eqnarray*}

For a.e. $\omega\in\Omega$, by similar argument as Lemma \ref{lemma of muu>inff M}: for any $\delta>0$, there are 
\begin{eqnarray*}
I_{i}=(a_{i},a_{i+1})&\lim\limits_{i\rightarrow\pm \infty}a_{i}=\pm \infty &
-\delta\leq V(x,\omega)\leq 0, \ x\in(a_{2i},a_{2i+1})
\end{eqnarray*}

Then $\liminf\limits_{\substack{\lambda\rightarrow 0 \\x\in(a_{2i},a_{2i+1})}}-\lambda v_{\lambda}(x,\omega)\geq -\delta$.

On the other hand, for each $\omega\in\Omega$, there is a sequence $\lambda_{n}\rightarrow 0$ and a constant $C\in\RR$, such that
\begin{eqnarray*}
-\lambda_{n}v_{\lambda_{n}}(x,\omega)\rightarrow C && \text{ locally uniformly in }\RR
\end{eqnarray*}

So $C\geq-\delta$. Since $\delta>0$ can be arbitrary, $C\geq 0$. Thus $\liminf\limits_{\lambda\rightarrow 0}-\lambda v_{\lambda}(x,\omega)\geq 0$.

\end{proof}

\begin{remark}
By Lemma \ref{minimum level negative} and Lemma \ref{minimum level positive}, for any $p\in[\mathbf{E}[z_{l}(0,\omega)],\mathbf{E}[\underline{f}_{0}(0,\omega)]]$, $H(p,x,\omega)$ is regularly homogenizable and $\overline{H}(p)=0$.
\end{remark}

\begin{lemma}
For $p\in(-\infty,\mathbf{E}[z_{l}(0,\omega)])$, $H(p,x,\omega)$ is regularly homogenizable.
\end{lemma}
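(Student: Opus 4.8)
The plan is to reduce the statement to the already known homogenization of the auxiliary level-set convex Hamiltonian $H^{-}$ introduced in Section \ref{reduction to one side constrained Hamiltonians}, namely $H^{-}(p,x,\omega)=H(p,x,\omega)$ for $p\leq 0$ and $H^{-}(p,x,\omega)=\mathcal{L}|p|+H(0,x,\omega)$ for $p\geq 0$. Since $H$ is constrained with index $(0,L)$, for each $(x,\omega)$ the map $p\mapsto H(p,x,\omega)$ is decreasing on $(-\infty,0]$; hence $H^{-}$ is decreasing on $(-\infty,0]$ and increasing on $[0,\infty)$, so $H^{-}$ is stationary, coercive and level-set convex in $p$. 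By the theory of homogenization for level-set convex Hamiltonians in one dimension (Section \ref{Proof of Large Oscillation}, cf. [\ref{Davini and Siconolfi}]), $H^{-}$ is regularly homogenizable at every $p\in\RR$, with a continuous, level-set convex effective Hamiltonian $\overline{H^{-}}(p)$.

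Next I would locate the set where $\overline{H^{-}}$ is strictly positive. Because $H^{-}$ has a single well at $p=0$ with $\min_{p}H^{-}(p,x,\omega)=H(0,x,\omega)$ and $\esssup_{(x,\omega)}H(0,x,\omega)=0$, the minimum value of $\overline{H^{-}}$ equals $0$ and is attained on an interval $[\alpha,\beta]\ni 0$, while for $\mu>0$ the negative branch of the sublevel set $\{\overline{H^{-}}\leq\mu\}$ is described by the standard quasi-convex formula, its left endpoint being $\mathbf{E}\big[\widetilde{\psi}_{1,(0,\omega)}(\mu)\big]$, where $\widetilde{\psi}_{1,(x,\omega)}$ is the inverse of the decreasing branch $H|_{(-\infty,0]}$. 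Letting $\mu\downarrow 0$, the value $\widetilde{\psi}_{1,(x,\omega)}(\mu)$ increases to the point of the decreasing branch at which $H(\cdot,x,\omega)$ vanishes, which is precisely $z_{l}(x,\omega)$; since $z_{l}$ is bounded, monotone convergence gives $\alpha=\mathbf{E}[z_{l}(0,\omega)]$. Consequently $\overline{H^{-}}(p)>0$ for every $p<\mathbf{E}[z_{l}(0,\omega)]$. (An alternative identification of $\alpha$ with $\mathbf{E}[z_{l}(0,\omega)]$ is to match it against Lemma \ref{minimum level negative} and the remark preceding the present lemma, which already show $\overline{H}\equiv 0$ on $[\mathbf{E}[z_{l}(0,\omega)],\mathbf{E}[\underline{f}_{0}(0,\omega)]]$.)

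Now I fix $p<\mathbf{E}[z_{l}(0,\omega)]$ and, for $\lambda>0$, let $v_{-,\lambda}$ and $v_{\lambda}$ solve $\lambda v_{-,\lambda}+H^{-}(p+v_{-,\lambda}',x,\omega)=0$ and $\lambda v_{\lambda}+H(p+v_{\lambda}',x,\omega)=0$. I apply (3) of Lemma \ref{Two points control} to the Hamiltonian $H^{-}$ with $P=0$: the hypotheses $\overline{H^{-}}(p)>\esssup_{(x,\omega)}H^{-}(0,x,\omega)=0$ and $p<0=P$ hold, so for a.e.\ $\omega$ and every $R>0$ there is $\lambda_{0}(R,p,\omega)>0$ with $p+v_{-,\lambda}'(x)\leq 0$ on $\big[-\tfrac{R}{\lambda},\tfrac{R}{\lambda}\big]$ whenever $0<\lambda<\lambda_{0}$. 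On that interval $H^{-}(p+v_{-,\lambda}',x,\omega)=H(p+v_{-,\lambda}',x,\omega)$, so $v_{-,\lambda}$ solves the $H$-cell problem there as well; since coercivity bounds $|\lambda v_{\lambda}|+|\lambda v_{-,\lambda}|$ by some $M(p)$, Lemma \ref{CP} yields $\big|\lambda v_{\lambda}(0)-\lambda v_{-,\lambda}(0)\big|\leq C(p)/R$. Letting $R\to\infty$ gives $\lim_{\lambda\to 0}(-\lambda v_{\lambda}(0,p,\omega))=\lim_{\lambda\to 0}(-\lambda v_{-,\lambda}(0,p,\omega))=\overline{H^{-}}(p)$ for a.e.\ $\omega$, and by the reformulation of regular homogenizability in terms of $x=0$ noted after its definition this proves that $H$ is regularly homogenizable at $p$ with $\overline{H}(p)=\overline{H^{-}}(p)$.

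The main obstacle is the second step: rigorously describing $\overline{H^{-}}$ near its minimum level and identifying the left endpoint of its flat piece with $\mathbf{E}[z_{l}(0,\omega)]$ (including checking that $\widetilde{\psi}_{1,(x,\omega)}(\mu)\uparrow z_{l}(x,\omega)$ as $\mu\downarrow 0$ and justifying the passage to expectations). Once that structural fact is in hand, the first and third steps are routine applications of results already established in the paper together with the known homogenization of level-set convex Hamiltonians.
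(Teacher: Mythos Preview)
Your approach is correct but takes a different, more circuitous route than the paper. The paper's proof is a two-line direct construction: for each $\mu>0$, set $p_{\mu}:=\mathbf{E}[\widetilde{\psi}_{1,(0,\omega)}(\mu)]$ (expectation of the inverse of the single decreasing branch $H|_{(-\infty,0]}$), and let $v$ solve $v'=\widetilde{\psi}_{1,(x,\omega)}(\mu)-p_{\mu}$. By ergodicity $v$ is sublinear, and since the derivative lies on one monotone branch, $H(p_{\mu}+v',x,\omega)=\mu$ holds classically; thus the cell problem is solvable at $p_{\mu}$, and $(-\infty,\mathbf{E}[z_{l}(0,\omega)])=\bigcup_{\mu>0}\{p_{\mu}\}$ finishes the argument. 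No auxiliary Hamiltonian, no gradient-control lemma, no comparison on large balls.

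Your detour through $H^{-}$ works, but notice that the ``standard quasi-convex formula'' you invoke in step~2 \emph{is} the paper's construction, applied to $H^{-}$; since $H^{-}=H$ on $(-\infty,0]$, the sublinear corrector you would build for $H^{-}$ already solves the cell problem for $H$ itself, making the whole comparison step~3 (Lemma~\ref{Two points control} plus Lemma~\ref{CP}) redundant. What your route buys is that step~3 is mechanical once step~2 is granted; what it costs is precisely the obstacle you flag---rigorously identifying the left endpoint of the flat piece of $\overline{H^{-}}$---and that identification is essentially the paper's argument in disguise. One small caution: citing Section~\ref{Proof of Large Oscillation} for the homogenization of $H^{-}$ is circular (that section treats index $(0,L)$ with large oscillation, not the single-well $H^{-}$); the reference to Davini--Siconolfi or Armstrong--Souganidis is the right one.
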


\begin{proof}
For each $\mu\geq 0$, denote $p_{\mu}=\mathbf{E}[\Psi_{(0,\omega)}(\mu)]$, let $v(x,\omega)$ be the solution of the equation
\begin{eqnarray*}
v^{\prime}(x,\omega)=\Psi_{(x,\omega)}(\mu)-p_{\mu}
\end{eqnarray*}

Then $v$ is a sub-linear solution of $H(p+v^{\prime},x,\omega)=\mu, x\in\RR$. The lemma follows from the fact that
\begin{eqnarray*}
(-\infty,\mathbf{E}[z_{l}(0,\omega)])=\bigcup_{\mu>0} \lbrace p_{\mu} \rbrace
\end{eqnarray*}
\end{proof}

\begin{remark}
From the construction of the effective Hamiltonian $\overline{H}(p)$, in the case of large oscillation, $\overline{H}(p)$ is coercive, continuous and level-set convex.
\end{remark}

\textbf{Acknowledgement:} The author would like to thank his advisor Yifeng Yu for his helpful guidance and generous support.

\bibliographystyle{amsplain}

\end{document}